\theoremstyle{plain}
\newtheorem{proposition}[equation]{Proposition}
\newtheorem{remarks}[equation]{Remarks}
\newtheorem{theorem}[equation]{Theorem}
\newtheorem{definition}[equation]{Definition}
\newtheorem{corollary}[equation]{Corollary}
\newtheorem{lemma}[equation]{Lemma}
\newtheorem{remark}[equation]{Remark}
 \newcommand{\esm}{\end{smallmatrix}\right)}
\newcommand{\bpm}{\begin{pmatrix}}
\newcommand{\ebpm}	{\end{pmatrix}}
\newcommand{\bspm}{\left(\begin{smallmatrix}}
\newcommand{\espm}	{\end{smallmatrix}\right)}
\newcommand{\R}{\mathbb R}
\newcommand{\A}{\mathbb A}
\newcommand{\Z}{\mathbb Z}
\newcommand{\bc}{\mathbb C}
\newcommand{\m}		{\hphantom{-}}
\newcommand{\fa}		{\mathfrak {a}}
\newcommand{\gm}		{\gamma}
\renewcommand{\i}{\infty}
\newcommand{\Q}		{\mathbb {Q}}
\newcommand{\C} {\mathbb C}
\newcommand{\ga}{\bc [GL(2, \Q)^+]}
\newcommand{\gltqp}  {GL(2, \Q)^+}
\newcommand{\fm}{\mathfrak{m}}
\renewcommand{\P}{\mathbb P}
\newcommand{\bs} {\backslash}
\newcommand{\modi} {\pmod{\mathcal I}}
\newcommand{\uhp}{\mathfrak h}
\newcommand{\fb}{\mathfrak b}
\begin{document}

\title{
Fourier expansions of GL(2) newforms  at various cusps
}

\author{Dorian Goldfeld, Joseph Hundley, and Min Lee }

\address{
Mathematics Department,       Columbia University,
       New York, NY 1002,
and
Department of Mathematics,
Mailcode 4408,
Southern Illinois University,
1245 Lincoln Drive,
Carbondale, IL 62901
}

\thanks{The first author was partially supported by  NSF grant 1001036. The second author was   supported by NSA grant MSPF-08Y-172}

\begin{abstract}
This paper studies the Fourier expansion  of Hecke-Maass eigenforms for $GL(2, \mathbb Q)$  of arbitrary weight, level, and character at various cusps. Translating well known results in the theory of adelic automorphic representations into classical language, a multiplicative expression for the Fourier coefficients at any cusp is derived.  In general, this expression involves Fourier coefficients at several different cusps.  A sufficient condition for the existence of multiplicative relations among Fourier coefficients at a single cusp is given.  It is shown that if the level is 4 times (or in some cases 8 times) an odd squarefree number then  there are multiplicative relations at every cusp. We also show that a local representation of $GL(2, \mathbb Q_p)$ which is isomorphic to a local factor of a global cuspidal automorphic representation generated by the adelic lift of a newform of arbitrary weight, level $N$, and  character $\chi\pmod{N}$ cannot be supercuspidal if $\chi$ is primitive. Furthermore, it is supercuspidal if and only if at every cusp (of width $m$ and cusp parameter = 0)  the $mp^\ell$ Fourier coefficient, at that cusp, vanishes for all sufficiently large positive integers $\ell$.  In the last part of this paper a three term identity involving the Fourier expansion at three different cusps is derived. 
\end{abstract}

\maketitle
\tableofcontents
%%%%%%%%%%%%%%%%%%%%%%
\section{Introduction}\label{s:Intro}

This paper was inspired by the following question raised by Harold Stark.
  \vskip 10pt
  \centerline{\it When do the Fourier coefficients at a cusp of a classical newform for $GL(2)$ satisfy multiplicative relations?}
  \vskip 10pt
  
   By a celebrated theorem of Hecke the Fourier coefficients at the cusp $\infty$ satisfy multiplicative relations.  Similar results regarding the Fourier coefficients at an arbitrary cusp, for a newform on $\Gamma_0(N)$, have been proven by Asai when the level $N$ is squarefree \cite{Asai:1976}.
 The proof is, roughly, that if $N$ is squarefree, then the cusps are represented by the 
quotients $1/t$ with $t$ running over the positive divisors of $N$, and the 
corresponding Fricke involution $W_t$ maps the cusp $\infty$ to $1/t$ and, at the 
same time, acts as an involution on the space of newforms commuting with all 
Hecke operators. 
  Kojima was able to obtain a similar result by another method in the case when the level is  4q where q is a prime \cite{Kojima:1979}. 
  Further, Asai's reasoning may be applied to general $N,$ yielding multiplicative relations at any cusp which is related to $\infty$ by a Fricke involution.  (Such cusps are in one-to-one correspondence with divisors $d$ of $N$ such that gcd$(d, N/d)=1$.)
  In the general case, very little was known about the Fourier coefficients at cusps which can not be mapped to  $\infty$ by a Fricke involution,
  at least in classical terms.

Many of the results of this paper are probably well known to experts in the theory of adelic automorphic representations, but the translation back to classical language is not so easy. We have been unable to find anything on this topic in the literature (beyond the aforementioned results of Asai and Kojima) so we thought it would be useful to write up our explicit  results.

The celebrated tensor product theorem \cite{Flath:1979} (see also \cite{Bump:1997} \S3.4) states  that an  irreducible adelic automorphic representation factors as a tensor product of local representations. Our theorem \ref{t:3.8} may be viewed as a translation of the tensor product theorem into a statement about Fourier coefficients of Maass forms. 
In theorem \ref{t:main}, we discuss 
the special case of prime power level
in greater detail.   These theorems give a partial answer to Stark's question, by providing a multiplicative expression for the Fourier coefficients at an arbitrary cusp.  However, in most cases this expression will involve Fourier coefficients at several other cusps.   Theorem  \ref{t:suffCond} gives a sufficient condition for multiplicative relations involving Fourier coefficients at a single cusp. As a consequence, we deduce that there are multiplicative relations at every cusp if the level $N$ is equal to 4 times a squarefree odd number. Further, if $N$ is 8 times a squarefree odd number, then theorem \ref{t:suffCond} will imply multiplicative relations at all cusps except for those of the form $a/b$ with $2 || b.$ In proposition \ref{p:eight} we consider such cusps in detail.

 When the level of a Maass form is divisible by 
 the square of a prime, one expects more complicated behavior, both classically and representation-theoretically.  
 From the representation-theoretic point 
 of view, this added complexity is in part
 due to the existence of 
  supercuspidal representations.   In Theorem~\ref{t:3.9},  we unpack the classical 
content of a criterion for a representation of $GL(2, \mathbb Q_p)$ to be supercuspidal.
This  states that an irreducible 
smooth representation of $GL(2, \mathbb Q_p)$ is
supercuspidal if and only if its Kirillov model is the 
Schwartz space of $\Q_p^\times$ 
(\cite{Jacquet:1970}, Proposition 2.16, \cite{Godement:1970}, Theorem 3).
As we show in Theorem~\ref{t:3.9}, this implies that a  representation of $GL(2, \mathbb Q_p)$ which is isomorphic to the local representation factor of an irreducible automorphic representation of $GL(2, \mathbb A)$ (generated by the adelic lift of a classical Hecke-Maass newform) is supercuspidal if and only if  the $m_\mathfrak a p^\ell$ Fourier coefficient vanishes for all sufficiently large integers $\ell$ for each cusp $\mathfrak a \in \mathbb Q\cup\{\infty\}$. Here $m_\mathfrak a$, also called the width of the cusp,  is an  integer given by \eqref{e:sigma_a}. It is  well-known
(see \cite{Bernstein:1984}, \cite{Gelbart:1996} and also Proposition 5.1 of \cite{Shahidi:1990})
 that every supercuspidal representation of $GL(2, \mathbb Q_p)$ can be realized as a component of a global  irreducible cuspidal automorphic representation of $GL(2, \mathbb A), $ and that 
 every such representation may be  associated to a normalized Hecke-Maass newform.  
 Theorem~\ref{t:3.9} 
gives a criterion for recognizing
 those Hecke-Maass newforms which are connected with supercuspidals in this way.   
 By combining this with a result from the 
 classical side, we deduce, in 
  Corollary~\ref{c:3.10} 
that a local representation of $GL(2, \mathbb Q_p)$  cannot be supercuspidal if it isomorphic to a local factor of a global cuspidal automorphic representation generated by the adelic lift of a Hecke-Maass newform of weight $k$, level $N$ and character 
$\chi\hskip -3pt\pmod{N}$ if $\chi$ is a primitive character.  This shows that supercuspidal representations can only arise from Hecke-Maass newforms of level $N$ with imprimitive characters $\hskip -5pt\pmod{N}$.  In this connection, we would also like to mention a result of Casselman (see \cite{Casselman:1973}), which implies that 
supercuspidal representations of $GL(2, \mathbb Q_p)$
 can only arise from Masss-Hecke newforms of level $N$  such that $p^2\mid N.$   For more detail see the 
 remark which follows the proof of Corollary~\ref{c:3.10}.
 
 Theorem~\ref{t:3.8} is a classical statement.  It 
 is natural to ask whether it is possible to prove it 
 classically.  
This problem is considered in 
section~\ref{s:TowardsClassicalProof}.
Although we do not recover the full strength of 
theorem~\ref{t:3.8}, we do obtain   relations between the Fourier coefficients at different cusps for Maass forms of arbitrary weight level and character, using a purely classical approach adapted from that employed by Kojima in the case $N=4q.$ The main result is given in Theorem~\ref{t:ThreeTerm} where a three term relation (involving three different cusps) is obtained.

\vskip 10pt\noindent
{\bf Acknowledgements:} The authors would like to thank Herv\'e Jacquet,  Muthu Krishnamurthy, Omer Offen, Scott Ahlgren, and Jeremy Rouse for some helpful conversations.
%%%%%%%%%%%%%%%%
\section{Newforms and Hecke operators for $\Gamma_0(N)$}\label{s:NewAndHecke}

In this section we briefly review the theory of newforms and Hecke operators for the group $\Gamma_0(N)$. We fix an integer $k$ (called the weight), an integer $N \ge 1$ (called the level), and a Dirichlet character $\chi: \left(\mathbb Z/N\mathbb Z\right)^\times \to \mathbb C^\times$. Let $\mathfrak h$ denote the upper half plane. For any function $f:\mathfrak h\to\mathbb C$,
and any matrix $\gamma \in GL(2, \mathbb R)$ of positive determinant, 
 define the slash operator
 \begin{equation}\label{e:slashOperator}
 \left(f \big |_k \gamma\right)(z) :=
  \left(\frac{cz+d}{|cz+d|}\right)^{-k} f\left(\frac{az+b}{cz+d}   \right),
  \end{equation}
   and the character $\widetilde \chi : \Gamma_0(N) \to \mathbb C^\times$ defined by
 \begin{equation}\label{e:CharacterForGm_0(N)}
 \widetilde\chi\left(\begin{pmatrix} a&b\\c&d\end{pmatrix}\right) := \chi(d).\end{equation}

An automorphic function of weight $k$, and character $\chi$ 
for $\Gamma_0(N)$ is 
a smooth function $f:  \mathfrak h \to \mathbb C$
 which satisfies the automorphy relation
 \begin{equation}\label{e:automorphy} \left(f \big |_k \gamma\right)(z) \; = \; \widetilde\chi(\gamma)  f(z), \qquad (z \in \mathfrak h)
 \end{equation}
 for all $\gamma   \in \Gamma_0(N)$. 
   
Now fix $\nu\in\mathbb C$. A Hecke-Maass form of weight $k$, type $\nu$, level $N$ and character $\chi\pmod{N}$ is an automorphic cuspidal function of weight $k$,  level $N$ and character $\chi$ which has moderate growth and which is also an eigenfunction of the weight $k$ Laplace operator,
	$$\Delta_k = -y^2\left(\frac{\partial^2}{\partial x^2}+\frac{\partial^2}{\partial y^2}\right)+iky\frac{\partial}{\partial x},$$
with  eigenvalue $\nu(1-\nu)$.

Let $f$ be an automorphic function of weight $k$, level $N$ and character $\chi$. For any positive integer $n$, the Hecke operator (twisted by a character $\chi$) is denoted $T^\chi_n$, and is defined by
	$$\left(T^\chi_n f\right)(z):=\frac{1}{\sqrt{n}}\sum_{ad=n, a, d>0}\chi(a) \sum_{b=0}^{d-1} \left(f\bigg|_k\begin{pmatrix} a & b\\ 0 & d\end{pmatrix}\right)(z).$$
Note that every Dirichlet character modulo $N,$ including
the (``trivial'') principal character, vanishes on 
integers which are not relatively prime to $N.$  This will kill some
of the terms in the definition of $T^\chi_n$ when $(n,N)\ne 1.$
Notably, if $q$ is a prime dividing $N,$ then 
$$\left(T^\chi_q f\right)(z):=\frac{1}{\sqrt{q}} \sum_{b=0}^{q-1} \left(f\bigg|_k\begin{pmatrix} 1 & b\\ 0 & q\end{pmatrix}\right)(z),$$
which coincides with the operator denoted $U_q^*$ in \cite{AtkinLehner:1970}.

 Cusps are defined to be elements of $\mathbb Q\cup\{\infty\}.$  The group $SL(2, \mathbb Z)$ permutes the cusps transitively.   Thus, 
given a cusp $\mathfrak a$ it is possible to choose a matrix $\gamma_\mathfrak a \in SL(2, \mathbb Z)$ such that
   $\gamma_\mathfrak a \infty = \mathfrak a.$  
   The matrix $\gamma_\mathfrak a$ is unique up to an element of the stabilizer,  $\Gamma_\infty,$ 
   of $\infty$ on 
   the right.  
   This group is given explicitly by 
     $$\Gamma_\infty = \left\{ \left.
 \begin{pmatrix} \epsilon & n \\ 0 & \epsilon \end{pmatrix} \right|
 \epsilon \in \{ \pm 1\} , \; n \in \mathbb Z
 \right\},$$
   and is contained in the group $\Gamma_0(N)$ for every $N.$ 
   In particular, 
   $\gamma_\mathfrak a \delta \gamma_\mathfrak a^{-1}$ is independent of the choice 
   of $\gamma_\mathfrak a$ for each $\delta \in \Gamma_\infty.$
    Let $\Gamma_\mathfrak a = \{\gamma\in \Gamma_0(N) \mid \gamma \mathfrak a = \mathfrak a\}.$  Then $\gamma_\mathfrak a^{-1} \Gamma_\mathfrak a \gamma_\mathfrak a$ is a subgroup 
   of finite index in $\Gamma_\infty,$ and contains  the scalar matrix $-1.$ As such it is the product 
   of the group of order $2$ generated by $-1$ and an infinite cyclic group generated by 
   $\left(\begin{smallmatrix} 1& m_\mathfrak a \\ 0& 1\esm$ for some positive integer $m_\mathfrak a.$  This integer
   $m_\mathfrak a$ may be characterized as the least positive power of 
   $\gamma_\mathfrak a\left(\begin{smallmatrix} 1&1\\0&1 \esm\gamma_\mathfrak a^{-1}$ which is in $\Gamma_0(N),$ and 
   as such is independent of the choice of $\gamma_\mathfrak a.$ 
   Let 
   \begin{equation}\label{e:sigma_a}
   \sigma_{\mathfrak a} = \gamma_{\mathfrak a} \begin{pmatrix} \sqrt{m_{\mathfrak a}} & 0 \\ 0 & \sqrt{m_{\mathfrak a}}^{-1}\end{pmatrix},
   \end{equation}
   then $\sigma_\mathfrak a$ has the following key properties:
   $$\sigma_\mathfrak a \infty = \mathfrak a,   \qquad \sigma_\mathfrak a^{-1} \Gamma_\mathfrak a \sigma_\mathfrak a = \Gamma_\infty.$$
The matrix $\sigma_\mathfrak a \left( \begin{smallmatrix} 1&1\\0&1 \end{smallmatrix}
      \right)\sigma_\mathfrak a^{-1} \in \Gamma_\mathfrak a$ is independent of the choice of $\sigma_\mathfrak a.$  
      We denote this element  by $g_\mathfrak a.$  It generates $\Gamma_\mathfrak a$ 
      together with the scalar matrix $-1.$	We  define the cusp parameter $0 \le \mu_\mathfrak a <1$ determined by the condition
   \begin{equation}\label{e:cuspParameter}
   \widetilde\chi(g_\mathfrak a) \; = \; e^{2\pi i \mu_\mathfrak a}.\end{equation}

Let $f$ be an automorphic function of weight $k$, level $N$ and character $\chi$. Then $f$ is cuspidal if
	$$\int_0^1 \left(f|_k\sigma_{\mathfrak a}\right)(x+iy)dx=0$$
for any cusp $\mathfrak a$. 
If $f$ is a 
Maass form  of weight $k$, type $\nu$, level $N$, and character $\chi,$ and $m$ is a positive integer, then $g(z) := f(mz)$ is a 
Maass form  of weight $k$, type $\nu$, level $mN$
and character $\chi \cdot \chi_{0,mN},$ where $\chi_{0, mN}$ is the 
principal character modulo $mN.$  
It may be regarded as a form of weight $k$ and type $\nu$ for 
$\Gamma_0(M),$ where $M$ is any multiple of $mN.$
Forms which arise in this
manner are said to be old, or to be oldforms.  A  
Maass cusp form $f$   of weight $k$, type $\nu$, level $N$, and character $\chi$ is said to be new or a newform if it is orthogonal
(with respect to the Petersson inner product)
to every oldform.  The main result of Atkin-Lehner theory \cite{AtkinLehner:1970} is that the space of newforms has a basis where each basis element is an eigenfunction of all the Hecke operators.  Such newforms are 
called Maass-Hecke newforms. 
 
We remark that a Maass form of level $N$ is a Masss-Hecke newform if 
and only if it is an eigenfunction of $T_p^\chi$ for every prime $p,$
and of the operator $f(z) \mapsto f( \overline{-1/N\bar z}).$ 
In the holomorphic case, this follows from Theorem 9 of \cite{Li:1975}.
The same proof works in the non-holomorphic case.  
%Let $f$ be an automorphic function of weight $k$, level $N$ and character $\chi$. Define  the $T_{-1}$ operator by
%	$$\left(T_{-1}f\right)(z) := f(-\overline{z}), \qquad (\forall z \in \mathfrak h).$$
%Then $T_{-1}f$ is an automorphic function of weight $-k$. 
%
%
%
%

 It was shown in \cite{Maass:1947}, \cite{Maass:1983}  that a newform of weight $k$, level $N$, and character $\chi\pmod{N}$ 
   for $\Gamma_0(N)$ has a Fourier-Whittaker expansion at every cusp. The expansion takes the following explicit form.
   
\begin{proposition}\label{p:FourierWhittakerExp}{\bf (Fourier-Whittaker expansion at a cusp)}
 %{\bf Proposition 1.6 (Fourier-Whittaker expansion at a cusp)} {\it 
Let $f$ be a Maass form of weight $k$, type $\nu$, level $N$, and character $\chi\pmod{N}$ for $\Gamma_0(N).$ Let $\mathfrak a \in \mathbb Q\cup\{\infty\}$ be a cusp and let $\sigma_\mathfrak a, \mu_\mathfrak a$ be as in \eqref{e:sigma_a}, \eqref{e:cuspParameter}, respectively. Then
    $$\left(f \big |_k \sigma_\mathfrak a \right)(z) = \sum_{n+\mu_\mathfrak{a}\ne 0} A(\mathfrak a,n) \, W_{\frac{\text{\rm sgn}(n) k}{2}, \; \nu-\frac12} \Big( 4\pi |n+\mu_\mathfrak a|\cdot y  \Big)e^{2\pi i(n+\mu_\mathfrak a)x}, \qquad (z=x+iy\in\mathfrak h)$$
  where $A(\mathfrak a, n) \in \mathbb C$ is called the $n^{th}$ Fourier coefficient at the cusp $\mathfrak a$ and
  $$W_{\alpha,\nu}(y) = \frac{y^{\nu+\frac12} e^{-\frac{y}{2}}}{\Gamma\left(\nu-\alpha+\frac12\right)} \;\int_0^\infty e^{-yt} \, t^{\nu-\alpha-\frac12} \left(1+t   \right)^{\nu+\alpha-\frac12} \; dt, \qquad (\alpha \in \mathbb R, \; \nu \in \mathbb C)$$
 is the Whittaker function.

	Assume $f$ is normalized so that $A(\infty, 1) = 1$. 
If $n=\text{\rm sgn}(n)p_1^{m_1}p_2^{m_2}\cdots p_r^{m_r}$ for distinct primes $p_1, \cdots, p_r$ and positive integers $m_1, \cdots, m_r$, then we have
	$$A(\infty, n) = A(\infty,\text{\rm sgn}(n))\prod_{i=1}^r A(\infty, p_i^{m_i}).$$
\end{proposition}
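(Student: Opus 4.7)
The plan is to prove the two assertions---the Fourier--Whittaker expansion at an arbitrary cusp and the multiplicativity of the Fourier coefficients at $\infty$---by essentially independent classical arguments.

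For the Fourier expansion, set $F(z) := (f\big|_k \sigma_\mathfrak{a})(z)$. Since $\sigma_\mathfrak{a}^{-1} g_\mathfrak{a} \sigma_\mathfrak{a} = \bspm 1 & 1 \\ 0 & 1 \espm$ and the slash operator is a right action, the automorphy relation $f\big|_k g_\mathfrak{a} = \widetilde\chi(g_\mathfrak{a}) f$ combined with \eqref{e:cuspParameter} yields $F(z+1) = e^{2\pi i \mu_\mathfrak{a}} F(z)$. Hence $e^{-2\pi i \mu_\mathfrak{a} x} F(x+iy)$ is smooth and $1$-periodic in $x$, and its standard Fourier expansion produces $F(z) = \sum_{n\in\mathbb{Z}} c_n(y) e^{2\pi i (n+\mu_\mathfrak{a})x}$. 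Because the slash action commutes with $\Delta_k$, the function $F$ is itself an eigenfunction of $\Delta_k$ with eigenvalue $\nu(1-\nu)$. Substituting the Fourier series into $\Delta_k F = \nu(1-\nu) F$ gives, for each $n$, a second-order ODE which (after the change of variable $y \mapsto 4\pi|n+\mu_\mathfrak{a}|y$) is the classical Whittaker equation with indices $\mathrm{sgn}(n+\mu_\mathfrak{a})k/2$ and $\nu - 1/2$. The moderate-growth hypothesis at $\mathfrak{a}$ singles out the exponentially decaying Whittaker solution $W_{\mathrm{sgn}(n+\mu_\mathfrak{a})k/2,\,\nu-1/2}(4\pi|n+\mu_\mathfrak{a}|y)$; the cuspidality hypothesis eliminates the would-be constant term (present only when $\mu_\mathfrak{a} = 0$), leaving the stated expansion with $A(\mathfrak{a},n)\in\mathbb{C}$ a scalar.

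For the multiplicativity, I would substitute the Fourier expansion of $f$ at $\infty$ into the definition of $T^\chi_n$. Each summand reduces to $f\!\left(\tfrac{az+b}{d}\right)$, since the weight-$k$ slash factor is trivial ($\bspm a & b \\ 0 & d \espm$ has zero lower-left entry and $d>0$), and the inner sum over $b\in\{0,\dots,d-1\}$ collapses by the orthogonality $\sum_{b=0}^{d-1} e^{2\pi i n' b/d} = d\cdot\mathbf{1}_{d\mid n'}$. After reindexing one arrives at
$$(T^\chi_n f)(z) = \frac{1}{\sqrt{n}} \sum_{m\in\mathbb{Z}} \Bigg(\sum_{\substack{ad=n \\ a\mid m}} \chi(a)\, d\, A(\infty, dm/a)\Bigg) W_{\mathrm{sgn}(m)k/2,\,\nu-1/2}(4\pi|m|y)\, e^{2\pi i m x}.$$
Since $T^\chi_n f = \lambda_n f$ for some eigenvalue $\lambda_n \in \mathbb{C}$, comparing $m$-th Fourier coefficients yields, in turn: at $m=1$, $\lambda_n = \sqrt{n}\, A(\infty,n)$ (using $A(\infty,1)=1$); at $m=-1$, $A(\infty,-n) = A(\infty,-1)\, A(\infty,n)$; and for $(m,n)=1$, $A(\infty,mn) = A(\infty,m)\, A(\infty,n)$, since $a\mid m$ and $a\mid n$ together with $(m,n)=1$ force $a=1$ and thus collapse the divisor sum. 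Iterating the coprime identity over the prime factorization of $|n|$ produces the claimed product formula.

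The main technical nuisance is the first part, where one must match the ODE produced by $\Delta_k$ to the Whittaker equation in precisely the form given in the statement, and verify via the known asymptotic $W_{\alpha,\nu}(y) \sim y^{\alpha}e^{-y/2}$ as $y\to\infty$ that the second, exponentially growing, solution is ruled out by moderate growth at $\mathfrak{a}$. The Hecke-theoretic bookkeeping in the second part is routine once the explicit formula for $T^\chi_n f$ is in hand.
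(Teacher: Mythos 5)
Your argument is correct and is the standard classical proof; the paper itself gives no proof of this proposition, citing Maass for the Whittaker expansion at a cusp and relying on Hecke's theorem (via Atkin--Lehner theory) for the multiplicativity, so your write-up supplies exactly the argument the paper defers to, and all the details check out (the periodicity $F(z+1)=e^{2\pi i\mu_{\mathfrak a}}F(z)$ from $\sigma_{\mathfrak a}^{-1}g_{\mathfrak a}\sigma_{\mathfrak a}=\left(\begin{smallmatrix}1&1\\0&1\end{smallmatrix}\right)$, the reduction to the Whittaker ODE with indices $\mathrm{sgn}(n+\mu_{\mathfrak a})k/2$ and $\nu-\tfrac12$, and the coefficient identity $\lambda_n A(\infty,m)=\tfrac1{\sqrt n}\sum_{ad=n,\,a\mid m}\chi(a)\,d\,A(\infty,dm/a)$). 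The one hypothesis worth stating explicitly is that the multiplicative relation requires $f$ to be a simultaneous eigenfunction of the $T_n^\chi$ (true for the newforms the paper has in mind), which you use implicitly when writing $T_n^\chi f=\lambda_n f$.
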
 %}

\vskip 10pt \noindent
{\bf Remark:}  
In general, the coefficients $A(\mathfrak a, n)$ depend on the
choice of $\gamma_\fa$ and not only on the choice of $\fa.$
\vskip 10pt

\section{Whittaker functions for the adelic lift of a newform}
\label{s:WhittakerAndNewform}
Let $\mathbb A$ be the ring of adeles over $\mathbb Q$. A place of $\mathbb A$ is defined to be either a rational prime or $\infty.$  For a finite prime $p$ and for $x\in \mathbb Q_p$ let
	$$\{x\} :=\begin{cases} \sum\limits_{i=-k}^{-1} a_ip^i, & \hbox{ if } x=\sum\limits_{i=-k}^\infty a_i p^i \in \mathbb Q_p, \hbox{ with } k>0, \;0\leq a_i\leq p-1;\\
	0, & \hbox{ otherwise. }\end{cases}$$
We now define  an additive character $e_v$  at each place $v$. If $x\in \mathbb Q_v$, we let
	$$e_v(x):=\begin{cases} e^{2\pi i x}, &\hbox{ if }v=\infty; \\
					e^{-2\pi i\{x\}}, &\hbox{ if }v<\infty.\end{cases}$$
Furthermore, for $x=\{x_v\}_v\in \mathbb A$, define a global additive character for $\mathbb A$ as
	$$e(x) := \prod_{v\leq \infty}e_v(x_v).$$				
		
Let $\chi$ be a  Dirichlet character modulo $N.$ 
As is well known, there is a character 
 $\chi_{_{\text {\rm idelic}}}:\mathbb{Q}^\times\backslash \mathbb{A}^\times \rightarrow \mathbb{C}^\times$ 
 associated to $\chi$ which we shall refer to
 as the idelic lift of $\chi.$  For the convenience of the reader we repeat the definition

\vskip 10pt\noindent
\begin{definition}{\bf(Idelic lift of a Dirichlet character)} 
%{\bf Definition  (Idelic lift of a Dirichlet character)} {\it  
Let $\chi$ be a Dirichlet character of conductor $p^f$ 
where $p^f$ is a fixed  prime power. We define the idelic lift of
 $\chi$ to be the unitary Hecke character
 $\chi_{_{\text{\rm idelic}}} : \mathbb Q^\times\backslash\mathbb A^\times \to \mathbb C^\times$ defined as
 $$\chi_{_{\text{\rm idelic}}}(g) := \chi_\infty(g_\infty)\cdot \chi_2(g_2)\cdot \chi_3(g_3) \cdots \; , \qquad \big(g = \{g_\infty, g_2, g_3, \ldots\} \in \mathbb A^\times\big),$$
 where
 $$\chi_\infty(g_\infty) = \begin{cases} \phantom{-}1, &\;\;\; \chi(-1) = 1,\\
 \phantom{-}1, &\;\;\; \chi(-1) = -1, \; g_\infty > 0,\\
 -1, &\;\;\; \chi(-1) = -1, \; g_\infty < 0,
 \end{cases}$$
 and where
$$\chi_v(g_v) = \begin{cases} \chi(v)^{m}, & \text{if} \;\; g_v \in v^m\mathbb Z_v^\times  \;\;
 \text{and}\;\; v \ne p,\\
  \chi(j)^{-1},& \text{if} \;\; g_v \in p^k\left(j + p^f\mathbb Z_p\right) \; \text{with} \;j, k \in \mathbb Z, \; (j,p)=1\; \text{and} \; v = p.\end{cases}$$
  More generally, every Dirichlet character $\chi$ of conductor
   $q \; = \; \prod\limits_{i=1}^r p_i^{f_i},$
   where $p_1, p_2, \ldots p_r$ are distinct primes and $f_1, f_2, \ldots f_r \ge 1$, can be factored as
   $\chi \; = \; \prod\limits_{i=1}^r \chi^{(i)},$
   where $\chi^{(i)}$ is a Dirichlet character of conductor $p_i^{f_i}.$ It follows that $\chi$ may be lifted to a Hecke character $\chi_{_{\text{\rm idelic}}}$ on $\mathbb A^\times$ where
   $\chi_{_{\text{\rm idelic}}} = \prod\limits_{i=1}^r \chi_{_{\text{\rm idelic}}}^{(i)}.$ 
% }
 \end{definition}

\vskip 10pt

For each $p|N$, let
	$$I_{p, N} =\left\{ \begin{pmatrix} a& b\\ c& d\end{pmatrix}\in GL(2, \mathbb{Z}_p)\;\bigg|\; c \equiv 0 \pmod{N}\right\}.$$ 
	Here $c \equiv 0 \pmod{N}$ means $c \in N\cdot \mathbb Z_p.$
	Then we can define
	$$K_0(N):=\left(\prod_{p|N}I_{p, N}\right)\left(\prod_{p\nmid N}GL(2, {\mathbb Z}_p)\right).$$
	For each $p| N$, define a character $\widetilde{\chi_p}: I_{p, N}\rightarrow \mathbb{C}^\times$, where 
	$$\widetilde{\chi_p}\left(\begin{pmatrix} a_p & b_p \\ c_p & d_p\end{pmatrix}\right):= \chi_p(d_p).$$ Define a character $\widetilde{\chi}_{_{\rm idelic}}: K_0(N)\rightarrow \mathbb{C}^\times$ such that
	$$\widetilde{\chi}_{_{\rm idelic}}(k) := \prod_{p|N}\widetilde{\chi_p}(k_p)$$
	for all $k=\{k_p\}_p\in K_0(N)$. 
	If $\chi$ is primitive, then the kernel of $\widetilde{\chi}_{_{\text \rm {idelic}}}$ is given by
			$$\text{Ker}\left(\widetilde{\chi}_{_{\text{\rm idelic}}}\right ) = \prod_{p\nmid N} GL(2, \mathbb{Z}_p)\cdot \prod_{p\mid N} \left\{\begin{pmatrix} a & b\\ c& d\end{pmatrix} \in I_{p, N}\;\bigg|\; d\equiv 1 \left(\hskip -5pt\mod{|N|_p^{-1}}\right)\right\}.$$

Let $\mathbb{A}_{_{\rm finite}}$ denote the finite adeles. 
For $g=\{g_v\}_v\in GL(2, \mathbb{A})\text{ or } GL(2, \mathbb{A}_{_{\rm finite}})$, define $(g)_v := g_v$ where $v$ is a place of $\mathbb{Q}$. 
Then for  $v\le\infty,$  we have the inclusion map
	$i_v: GL(2, \mathbb Q_v)\to GL(2, \mathbb A)$
	defined by
	$$(i_v(g_v))_w:=\begin{cases} \left(\begin{smallmatrix} 1 & 0 \\ 0 & 1\end{smallmatrix}\right), & \hbox{ if } w\neq v;\\
				\;\;\,	g_v, & \hbox{ if }w=v.\end{cases}$$
				
				We shall define the diagonal embedding map
  $i_{_{\rm diag}} : GL(2, \mathbb Q)  \to GL(2, \mathbb A_\mathbb Q)$
by  
  $$i_{_{\rm diag}}(\gamma) \; :=  \; \{\gamma, \gamma, \gamma, \;\;\ldots\}, \quad\qquad (\forall \gamma \in GL(2, \mathbb Q)).$$ 
					
By strong approximation, it follows that for any $g\in GL(2, \mathbb{A})$, there exist $\gamma\in GL(2, \mathbb{Q})$, $g_\infty\in GL(2, \mathbb{R})^+$, $k\in K_0(N)$ such that
	$$g=i_{_{\rm diag}}(\gamma)i_\infty(g_\infty)i_{_{\text{\rm finite}}}(k),$$
	where $i_{_{\text{\rm finite}}}$ denotes the diagonal embedding of $GL(2, \mathbb Q)$ into $GL(2, \mathbb A_{_{\text{\rm finite}}}),$ the group of finite adeles.
	
Let $f$ be a Maass form of weight $k$, type $\nu$, level $N$ and  character $\chi$ modulo $N$. By the above decomposition, we may define a function $f_{_{\text{adelic}}}: GL(2, \mathbb{A})\rightarrow\mathbb{C}$ as
	\begin{equation}\label{e:AdelicLift}
	f_{_{\text{\rm adelic}}}(g) := f_{_{\text{\rm adelic}}}(i_{_{\rm diag}}(\gamma)i_{\infty}(g_\infty)i_{_{\text{\rm finite}}}(k)) := \left(f\big |_k g_\infty\right)(i)\cdot\widetilde{\chi}_{_{\text{\rm idelic}}}(k).
	\end{equation}
	It follows from \eqref{e:automorphy} that $f_{_{\text{adelic}}}$ is well-defined.
Further, $f_{_{\text{adelic}}}$  is an adelic automorphic form with a central character $\chi_{_{\text {\rm idelic}}}$.

	The function $f_{_{\text{adelic}}}$ has a Fourier expansion, 
		$$f_{_{\text{adelic}}}(g) = \sum_{\alpha\in\mathbb{Q}^\times}W_f\left(\begin{pmatrix} \alpha & 0 \\ 0 & 1\end{pmatrix} g\right)$$
		where
		$$W_f(g) := \int_{\mathbb{Q}\backslash\mathbb{A}}f_{_{\text{adelic}}}\left(\begin{pmatrix} 1 & u \\ 0 & 1\end{pmatrix} g\right)e(-u)du$$ 
	for all $g\in GL(2, \mathbb{A})$
	(cf. \cite{Jacquet:1970}, proof of Lemma 11.1.3). The function $W_f(g)$ is called a global Whittaker function for $f_{_{\text{adelic}}}$. 
\vskip 10pt\noindent
%{\bf  Theorem 3.2 } {\it 
\begin{theorem}\label{t:GlobalWhittaker}
Let $f:\Gamma_0(N)\backslash \mathfrak h\to \mathbb C$ be a Maass form of weight $k$, type $\nu$, level $N,$ and  character $\chi\pmod{N}$. 
Let $S$ be a set of representatives for the $\Gamma_0(N)$-equivalence classes of cusps.
Then $f$ has the Fourier-Whittaker expansion at every cusp as in  Proposition~\ref{p:FourierWhittakerExp}. 
 By the Iwasawa decomposition, every $g\in GL(2, \mathbb{A})$ has a decomposition 	
 $$g=\begin{pmatrix} 1 & x\\ 0 & 1\end{pmatrix} \begin{pmatrix} y & 0 \\ 0 & 1\end{pmatrix} \begin{pmatrix} r & 0\\ 0 & r\end{pmatrix} i_{_{\rm diag}}\left(\begin{pmatrix} \epsilon & 0 \\ 0 & 1\end{pmatrix}\right) k,$$
where  $x\in \mathbb{A}, \; r, \,  y=\{y_v\}_v\in \mathbb A^\times,$ with $y_\infty >0,$  $k=\{k_v\}_v\in K=SO(2, \mathbb{R})\prod_{p<\infty}GL(2, \mathbb{Z}_p),$ $k_\infty=\left(\begin{smallmatrix} \cos\theta & \sin\theta\\ -\sin\theta & \cos\theta\esm, \; (0\leq \theta< 2\pi)$, and $\epsilon=\pm 1$.
Let $t:=\prod_p |y_p|_p^{-1}$. Then
	$$W_f(g) = \begin{cases} A\big(\mathfrak{a}, \epsilon m_\mathfrak{a}t-\mu_\mathfrak{a}\big) \,W_{\frac{\epsilon k}{2}, \nu-\frac{1}{2}}(4\pi y_\infty) \, \chi_{_{\text{\rm idelic}}}(r) \, e(x) \, e_{\infty}\left(\frac{k\theta}{2\pi}+tj\epsilon\right) \, \widetilde{\chi}_{_{\text{\rm idelic}}}(k_0),\\
	\hskip 140pt \text{ if } \;\epsilon m_\mathfrak{a} t-\mu_\mathfrak{a}\in\mathbb{Z},\\
	0, \;\;\;\;\text{ otherwise.}\end{cases}$$
Here $k_0\in K_0(N)$, the cusp  $\mathfrak{a}\in S$, and an integer $0\leq j<m_\mathfrak{a}$ are uniquely determined by
	$$i_{_{\text{\rm finite}}}\left(\gamma_\mathfrak{a}\begin{pmatrix} 1 & j\\ 0 & 1\end{pmatrix}\right)\prod_{p|N}i_p\left (\begin{pmatrix} t^{-1}y_p & 0 \\ 0 & 1\end{pmatrix} k_p\right)=k_0 \; \in \; K_0(N).$$
	%}
\end{theorem}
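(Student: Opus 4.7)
The plan is to compute the adelic Whittaker integral
\[W_f(g)=\int_{\mathbb Q\backslash\mathbb A}f_{_{\text{adelic}}}\!\left(\begin{pmatrix}1&u\\0&1\end{pmatrix}g\right)e(-u)\,du\]
directly, using the transformation properties of $W_f$ under the Borel subgroup, left-$\mathbb Q$-invariance of $f_{_{\text{adelic}}}$, strong approximation, and the explicit formula \eqref{e:AdelicLift} to reduce to the Fourier-Whittaker expansion of $f|_k\sigma_\mathfrak a$ at the cusp supplied by Proposition~\ref{p:FourierWhittakerExp}.

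First I would strip off the easy factors. The substitution $u\mapsto u-x$ pulls $e(x)$ out of the unipotent factor $\left(\begin{smallmatrix}1&x\\0&1\end{smallmatrix}\right)$. The central character of $f_{_{\text{adelic}}}$ pulls out $\chi_{_{\text{idelic}}}(r)$. Since $i_{_{\rm diag}}\!\left(\begin{smallmatrix}\epsilon&0\\0&1\end{smallmatrix}\right)$ commutes with $\left(\begin{smallmatrix}y&0\\0&1\end{smallmatrix}\right)$ and with the center, I would move it to the left, absorb it via $\mathbb Q$-invariance of $f_{_{\text{adelic}}}$, and use the change of variable $u\mapsto\epsilon u$ (valid since $|\epsilon|_{\mathbb A}=1$) to replace the test character $e(-u)$ by $e(-\epsilon u)$.

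Next I would invoke the classical-adelic dictionary $GL(2,\mathbb Q)\backslash GL(2,\mathbb A)/Z(\mathbb R)K_\infty K_0(N)\cong\Gamma_0(N)\backslash\mathfrak h$: restricted to cusp neighborhoods, it parametrizes the relevant double cosets in $GL(2,\mathbb A_{_{\text{finite}}})$ by pairs $(\mathfrak a,j)$ with $\mathfrak a\in S$ and $0\le j<m_\mathfrak a$, which is exactly the content of the displayed equation determining $\mathfrak a$, $j$, and $k_0\in K_0(N)$. Using left-$\mathbb Q$-invariance to absorb $i_{_{\rm diag}}\!\left(\gamma_\mathfrak a\left(\begin{smallmatrix}1&j\\0&1\end{smallmatrix}\right)\right)$ and applying \eqref{e:AdelicLift}, the finite part produces the factor $\widetilde\chi_{_{\rm idelic}}(k_0)$; at the archimedean place the identity $\gamma_\mathfrak a\left(\begin{smallmatrix}\sqrt{m_\mathfrak a}&0\\0&1/\sqrt{m_\mathfrak a}\end{smallmatrix}\right)=\sigma_\mathfrak a$ lets me rewrite $(f|_k g_\infty)(i)$ in terms of $f|_k\sigma_\mathfrak a$ evaluated at a point of $\mathfrak h$ whose imaginary part is $y_\infty/(m_\mathfrak a t)$ and whose real part is shifted by $j/m_\mathfrak a+u_\infty/(m_\mathfrak a t)$, with the rotation by $k_\infty$ contributing $e_\infty(k\theta/(2\pi))$ through the weight-$k$ slash operator.

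Finally I would substitute the Fourier-Whittaker expansion from Proposition~\ref{p:FourierWhittakerExp} and perform the integral against $e(-\epsilon u)$ over $\mathbb Q\backslash\mathbb A$. Orthogonality of the additive characters isolates the unique Fourier mode with $n+\mu_\mathfrak a=\epsilon m_\mathfrak a t$ (giving the case distinction $\epsilon m_\mathfrak a t-\mu_\mathfrak a\in\mathbb Z$), producing the coefficient $A(\mathfrak a,\epsilon m_\mathfrak a t-\mu_\mathfrak a)$, the Whittaker factor $W_{\epsilon k/2,\nu-1/2}(4\pi|n+\mu_\mathfrak a|\cdot y_\infty/(m_\mathfrak a t))=W_{\epsilon k/2,\nu-1/2}(4\pi y_\infty)$, and the phase $e_\infty(tj\epsilon)$ coming from the $j/m_\mathfrak a$ shift in the classical Fourier exponential. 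The hardest step will be the matrix bookkeeping in the strong approximation and the subsequent classical identification: ensuring $\gamma_\mathfrak a$, $j$, and $k_0$ are chosen compatibly at every prime $p|N$, and then reconciling the classical width $m_\mathfrak a$ (entering through $\sigma_\mathfrak a$) with the adelic scaling $t=\prod_p|y_p|_p^{-1}$ so that $\epsilon m_\mathfrak a t-\mu_\mathfrak a$ emerges as the correct Fourier index and $e_\infty(tj\epsilon)$ comes out with the right normalization.
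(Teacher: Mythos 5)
The paper does not actually prove Theorem~\ref{t:GlobalWhittaker}; its ``proof'' consists of the single citation ``See \cite{GoldfeldHundley}.'' So there is no in-paper argument to compare yours against line by line. That said, your outline is the standard unfolding argument one would expect to find in that reference, and it is essentially correct: you correctly peel off $e(x)$ and $\chi_{_{\text{\rm idelic}}}(r)$, handle the sign matrix by conjugation and the substitution $u\mapsto \epsilon u$, use $i_{_{\text{\rm finite}}}(\delta)=i_{_{\rm diag}}(\delta)\,i_\infty(\delta)^{-1}$ together with the displayed defining equation for $(\mathfrak a, j, k_0)$ to transfer the finite data to the archimedean slash, and you correctly predict that the surviving Fourier mode is $n+\mu_\fa=\epsilon m_\fa t$, which makes the Whittaker argument collapse to $4\pi y_\infty$ and produces the phase $e_\infty(tj\epsilon)$. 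The one place your sketch is genuinely thin is the finite part of the integration variable: $u$ ranges over $\mathbb Q\backslash\mathbb A$, and commuting $n(u_p)$ past $\left(\begin{smallmatrix} y_p&0\\0&1\end{smallmatrix}\right)$ rescales $u_p$ by $y_p^{-1}$, which is what actually forces the vanishing of $W_f$ outside the stated support (e.g.\ the integrality condition and the restriction $y_p\in\mathbb Z_p$ at $p\nmid N$ visible in Corollary~\ref{c:LocalWhittaker}); you fold this into ``matrix bookkeeping,'' but it deserves to be made explicit since it is the source of the case distinction, not merely the orthogonality of the archimedean exponentials. With that point spelled out, your plan would constitute a complete proof of the statement the paper leaves to the external reference.
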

%\vskip 10pt \noindent
%{\bf Proof: }
\begin{proof}
See \cite{GoldfeldHundley}. 
\end{proof}
%\qed

\vskip 10pt\noindent 
%{\bf Definition 3.3 }{\it 
\begin{definition}\label{d:LocalWhittaker}
Let $f:\Gamma_0(N)\backslash \mathfrak h\to \mathbb C$ be a Maass form of weight $k$, type $\nu$, level $N$ and  character $\chi\pmod{N}$. For each place $v$, we define a function $W_{f, v}(g_v):GL(2, \mathbb Q_v) \to \mathbb C$ as follows.
\vskip10pt
$\bullet$ $\underline{v=\infty}$:
	$$W_{f, \infty}(g_\infty) \; := \;W_f\big(i_\infty(g_\infty)\big)$$
\vskip5pt
$\bullet$ $\underline{v=p<\infty}$: 
	$$W_{f, p}(g_p) \; := \lim_{y_\infty \to 1}\; \frac{W_f\Big(i_\infty\left( \begin{pmatrix} y_\infty& 0\\ 0 & 1\end{pmatrix}\right) \cdot i_p(g_p)\Big)}{W_{\frac{k}{2}, \nu-\frac{1}{2}}(4\pi y_\infty)}.$$
	%}
\end{definition}
	\begin{remark}
	The purpose of the limit in the definition of $W_p$ is to make sure that
	$W_p$ is defined even when 
	$W_{\frac{k}{2}, \nu-\frac{1}{2}}(4\pi y_\infty)$
	happens to vanish at $y_\infty=1.$  In 
	fact, the ratio $\frac{W_f\left(i_\infty\left( \bspm y_\infty& 0\\ 0 & 1\espm\right) \cdot i_p(g_p)\right)}{W_{\frac{k}{2}, \nu-\frac{1}{2}}(4\pi y_\infty)}$ will turn out to be the same for all $y_\infty >0$ 
	such that $W_{\frac{k}{2}, \nu-\frac{1}{2}}(4\pi y_\infty)\ne 0.$
	\end{remark}
\vskip 5pt
\noindent
%{\bf Corollary 3.4 } {\it 
\begin{corollary}\label{c:LocalWhittaker}
Let $f$ be a Maass form of weight $k$, type $\nu$, level $N$ and  character $\chi\pmod{N}$. 
Let $S$ be a set of representatives for the $\Gamma_0(N)$-equivalence classes of cusps.
Let $\mathfrak a\in \mathbb Q$ be a cusp for $\Gamma_0(N)$. For each integer $n$, let $A(\mathfrak a, n)$ be the $n$-th Fourier coefficient of $f$ at the cusp $\mathfrak a$ as in Proposition~\ref{p:FourierWhittakerExp}. 
For every place $v$ of $\mathbb{Q}$ and any $g_v\in GL(2, \mathbb Q_v)$, we have a decomposition 
$$g_v = \begin{pmatrix} 1 & x_v\\ 0 & 1\end{pmatrix} \begin{pmatrix} y_v & 0 \\ 0 & 1\end{pmatrix} \begin{pmatrix} r_v & 0 \\ 0 & r_v\end{pmatrix} k_v,$$ 
where $x_v\in \mathbb{Q}_v$ and  $y_v, \; r_v\in \mathbb{Q}_v^\times$. If $v=\infty$, then  $y_\infty >0$, $\epsilon=\pm 1$ and $k_\infty\in \left(\begin{smallmatrix} \epsilon & 0 \\ 0 & 1\esm SO(2, \mathbb{R})$. If $v$ is finite, then $k_v\in GL(2, \mathbb{Z}_v)$.

For each finite prime $p$, fix $y_p\in \mathbb Q_p^\times$ and $k_p\in GL(2, \mathbb Z_p)$. Then there exists a cusp $\mathfrak a_p\in S$, an integer $0\leq j_p<m_{\mathfrak a_p}$, and $k_{p, 0}\in K_0(N)$, which are uniquely determined by $y_p$ and $k_p$ such that
	$$i_{_{\text{\rm finite}}}\left(\gamma_{\mathfrak a_p}\begin{pmatrix} 1 & j_p \\ 0 & 1\end{pmatrix}\right)i_p\left(\begin{pmatrix} |y_p|_p y_p & 0 \\ 0 & 1\end{pmatrix} k_p\right)=k_{p, 0}\in K_0(N).$$

Then by Definition~\ref{d:LocalWhittaker}, for each place $v$ for $\mathbb Q$, we have the following:
\begin{itemize}
\item $\underline{v=\infty}$: 
$$W_{f, \infty}(g_\infty) = A(\infty, \epsilon) \, W_{\frac{\epsilon k}{2}, \; \nu-\frac{1}{2}}(4\pi y_\infty) \, \chi_\infty(r_\infty) \, e_\infty\left(x_\infty + \frac{k\theta}{2\pi}\right).$$
\item $\underline{v=p\nmid N}$: 
	$$\begin{aligned} W_{f, p}(g_p) & =\begin{cases}  A\big(\infty, |y_p|_p^{-1}\big) \, \chi_p(r_p) \, e_p(x_p), &\text{ if } y_p\in\mathbb{Z}_p,\\
		0,  &\text{ otherwise.}\end{cases}\end{aligned}$$
\item $\underline{v=p\mid N}$:
	$$\begin{aligned} W_{f, p}(g_p) & =\begin{cases} A\Big(\mathfrak{a}_p, m_{\mathfrak{a}_p}|y_p|_p^{-1}-\mu_{\mathfrak{a}_p}\Big) \,\chi_p(r_p) \, e_p(x_p) \, e_\infty\left(|y_p|_p^{-1}j_p\right)\,\widetilde{\chi}_{_{\text{\rm  idelic}}}(k_{p, 0}),\\
	\hskip 140pt	\text{ 		if } \; m_{\mathfrak{a}_p}|y_p|_p^{-1}-\mu_{\mathfrak{a}_p}\in\mathbb{Z},\\
		0, \text{		otherwise.}\end{cases}\end{aligned}$$
\end{itemize}
%}
\end{corollary}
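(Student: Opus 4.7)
The proof is, at its core, a direct substitution: apply Theorem~\ref{t:GlobalWhittaker} to the specific adelic elements that appear in Definition~\ref{d:LocalWhittaker}, and simplify using the fact that most components are trivial. The plan is to treat the three cases ($v=\infty$, $v=p\nmid N$, $v=p\mid N$) separately, reading off the Iwasawa data and the auxiliary cusp/integer data for each.

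For $v=\infty$, we substitute $g=i_\infty(g_\infty)$ into Theorem~\ref{t:GlobalWhittaker}. All finite components of $y$ and $k$ are trivial, so $t=1$ and the defining equation for the cusp is solved by $\mathfrak{a}=\infty$, $\gamma_\mathfrak{a}=I$, $j=0$, $k_0=I$; hence $m_\mathfrak{a}=1$, $\mu_\mathfrak{a}=0$, and $\widetilde\chi_{_{\text{\rm idelic}}}(k_0)=1$. The global formula then collapses to the claimed expression for $W_{f,\infty}$.

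For $v=p\nmid N$, we substitute $g=i_\infty(\bpm y_\infty & 0\\ 0&1\ebpm)\cdot i_p(g_p)$. Then $t=|y_p|_p^{-1}$, and at each prime $q\mid N$ (necessarily $q\neq p$) the factor $\bpm t^{-1}y_q & 0 \\ 0 & 1 \ebpm$ equals $\bpm |y_p|_p & 0\\ 0&1\ebpm$, which is already in $I_{q,N}$ since $|y_p|_p$ is a $q$-adic unit. Thus one may again take $\mathfrak{a}=\infty$, $j=0$, and $k_0$ is built purely out of diagonal $q$-adic unit matrices on which $\widetilde\chi_{_{\text{\rm idelic}}}$ is trivial. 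The integrality condition $\epsilon m_\mathfrak{a}t-\mu_\mathfrak{a}=|y_p|_p^{-1}\in\Z$ is exactly the condition $y_p\in\Z_p$, giving the vanishing/non-vanishing dichotomy. Dividing by $W_{\frac{k}{2},\nu-1/2}(4\pi y_\infty)$ and passing to the limit $y_\infty\to 1$ yields the stated formula.

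For $v=p\mid N$, we again substitute $g=i_\infty(\bpm y_\infty & 0\\ 0&1\ebpm)\cdot i_p(g_p)$, so $t=|y_p|_p^{-1}$. Now at the prime $p$ one has $\bpm t^{-1}y_p & 0\\ 0&1\ebpm k_p=\bpm |y_p|_py_p & 0\\ 0&1\ebpm k_p$, which is the expression appearing in the corollary's defining equation for $(\mathfrak{a}_p,j_p,k_{p,0})$. At the remaining primes $q\mid N$, $q\neq p$, the factor is $\bpm |y_p|_p & 0\\ 0&1\ebpm$, a diagonal $q$-adic unit matrix on which $\widetilde\chi_{_{\text{\rm idelic}}}$ is trivial. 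Hence the $k_0$ of Theorem~\ref{t:GlobalWhittaker} and the $k_{p,0}$ of the corollary satisfy $\widetilde\chi_{_{\text{\rm idelic}}}(k_0)=\widetilde\chi_{_{\text{\rm idelic}}}(k_{p,0})$, and the cusp and $j$ produced by the theorem are the $\mathfrak{a}_p$ and $j_p$ of the corollary. The global formula then reduces, after division by $W_{\frac{k}{2},\nu-1/2}(4\pi y_\infty)$ and the limit $y_\infty\to 1$, to the stated expression.

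The main obstacle, which is really just bookkeeping, is ensuring that the extra factors $\bpm |y_p|_p & 0\\ 0&1\ebpm$ appearing at primes $q\mid N$ with $q\neq p$ (which arise from the mismatch between the local formulation using only $(y_p,k_p)$ and the global formulation using all components simultaneously) contribute trivially to $\widetilde\chi_{_{\text{\rm idelic}}}$ and do not interfere with the identification of the cusp $\mathfrak{a}_p$. This is the point where one must carefully use that $|y_p|_p$ is a $q$-adic unit for $q\neq p$ and that $\widetilde\chi_q$ depends only on the $(2,2)$-entry.
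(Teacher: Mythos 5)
Your proposal is correct and is exactly the paper's approach: the paper's entire proof of this corollary is the one line ``Use Definition~\ref{d:LocalWhittaker} and Theorem~\ref{t:GlobalWhittaker},'' and what you have written is precisely that substitution carried out, including the key bookkeeping point that the factors $\left(\begin{smallmatrix} |y_p|_p & 0\\ 0&1\end{smallmatrix}\right)$ at primes $q\mid N$, $q\neq p$ lie in $I_{q,N}$ with lower-right entry $1$ and hence contribute trivially to $\widetilde{\chi}_{_{\text{\rm idelic}}}$.
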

%\vskip 10pt\noindent
%{\bf Proof} 
\begin{proof}
Use Definition~\ref{d:LocalWhittaker} and  Theorem~\ref{t:GlobalWhittaker}.
%\qed
\end{proof}

	\vskip10pt \noindent
{\bf Remark }{\it In Corollary~\ref{c:LocalWhittaker}, assume that $f$ is a normalized Maass form, i.e., $A(\infty, 1) = 1$. Then for $p\nmid N$, 
	$$W_{f, p}(k_p) = 1, \text{ for all }k_p\in GL(2, \mathbb{Z}_p),$$
	and for $p\mid N$, 
	$$W_{f, p}(k_p) = 1, \text{ for all } k_p\in I_{p, N}.$$}
	
	The following theorem is well known, but it is usually not presented in the explicit form which we require for our purposes.

\vskip 10pt 	
\noindent
\begin{theorem} \label{t:FactorizationOfWhittaker}Let $f:\Gamma_0(N)\backslash \mathfrak h\to \mathbb C$ be a Maass-Hecke newform of weight $k$, type $\nu$, level $N$ and  character $\chi\pmod{N}$. If $f$ is  normalized, i.e., its first Fourier coefficient at $\infty$ is $1$, then 
	$$W_f(g) = \prod_v W_{f, v}(g_v).$$ 
\end{theorem}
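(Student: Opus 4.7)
The plan is to invoke Flath's tensor product theorem together with multiplicity one (uniqueness of the Whittaker model) for irreducible admissible representations of $GL(2)$. Since $f$ is a normalized Hecke newform, the adelic lift $f_{_{\text{\rm adelic}}}$ generates an irreducible cuspidal automorphic representation $\pi$ of $GL(2,\mathbb A)$ with central character $\chi_{_{\text{\rm idelic}}}$. By Flath's theorem, $\pi \cong \bigotimes_v' \pi_v$, and $f_{_{\text{\rm adelic}}}$ corresponds to a pure tensor $\bigotimes_v \varphi_v$ in which $\varphi_p$ is the $GL(2,\mathbb Z_p)$-spherical vector for $p\nmid N$, a Casselman newform vector in $\pi_p$ (transforming by $\widetilde{\chi_p}$ under $I_{p,N}$) for $p\mid N$, and the appropriate weight-$k$ vector at infinity dictated by \eqref{e:AdelicLift}.

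From uniqueness of the Whittaker model, both globally and at each place, it follows that there is a constant $C$ for which $W_f(g) = C\cdot\prod_v \widetilde W_v(g_v)$ for any consistent choice of local Whittaker functions $\widetilde W_v$ attached to the $\varphi_v$. The next step is to verify that the functions $W_{f,v}$ introduced in Definition~\ref{d:LocalWhittaker} are such local Whittaker functions: at $\infty$ this is immediate from $W_{f,\infty}(g_\infty) := W_f(i_\infty(g_\infty))$, while at each finite $p$ the ratio defining $W_{f,p}$ is engineered precisely to strip off the archimedean factor once the factorization is known. Finally, I would pin down $C$ by evaluating at the identity: the unramified formula in Corollary~\ref{c:LocalWhittaker} gives $W_{f,p}(I)=1$ for every $p\nmid N$, and the normalization $A(\infty,1)=1$ together with the explicit formula in Theorem~\ref{t:GlobalWhittaker} forces the remaining product $W_{f,\infty}(I)\prod_{p\mid N}W_{f,p}(I)$ to equal $W_f(I) = 1$, yielding $C=1$.

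The main obstacle is justifying that the $y_\infty$-limit in Definition~\ref{d:LocalWhittaker} genuinely recovers a local Whittaker function: this asserts that the dependence of
$W_f\!\bigl(i_\infty(\bspm y_\infty & 0\\ 0 & 1\espm)\,i_p(g_p)\bigr)$ on $y_\infty$ separates multiplicatively as $W_{k/2,\,\nu-1/2}(4\pi y_\infty)$ times a function of $g_p$ alone, which is essentially equivalent to the factorization we are trying to prove. The cleanest route, therefore, is to first establish the abstract product decomposition $W_f = C\prod_v \widetilde W_v$ from uniqueness of the Whittaker model, then match $\widetilde W_v$ with $W_{f,v}$ place by place using Theorem~\ref{t:GlobalWhittaker} and Corollary~\ref{c:LocalWhittaker} up to local constants, and only at the end solve for those constants using the normalization of the newform.
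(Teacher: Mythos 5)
Your proposal is correct and follows essentially the same route as the paper: Flath's tensor product theorem plus uniqueness of local and global Whittaker models and Casselman's newvector theorem give $W_f = C\prod_v \widetilde W_v$ for a pure tensor, and the local factors are then identified with the $W_{f,v}$ of Definition~\ref{d:LocalWhittaker} using the nonvanishing forced by $A(\infty,1)=1$. The only small slip is the claim $W_f(I)=1$ (in fact $W_f(I)=W_{\frac k2,\nu-\frac12}(4\pi)$, which could even vanish); as in the paper, one should instead evaluate at some $i_\infty(g_\infty)$ with $W_f(i_\infty(g_\infty))\ne 0$, which exists precisely because $A(\infty,1)\ne 0$, and the argument then goes through unchanged.
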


%{\bf Proof:}  
\begin{proof}
If $f$ is a Maass-Hecke newform then $f_{_{\text{adelic}}}$ generates an irreducible subspace
	$$V \; \subset \; \mathcal{A}_0\Big(GL(2, \mathbb{Q})\backslash GL(2, \mathbb{A}), \;\, \chi_{_{\text{\rm idelic}}}\Big),$$
	and an irreducible automorphic cuspidal representation $(\pi, V)$, 
under the actions of $GL(2, \mathbb{A}_{_{\rm finite}})$ and $(\mathfrak{g}, K_\infty)$ for $GL(2, \mathbb{R})$, where $\mathfrak{g} = \mathfrak{gl}(2, \mathbb{C})$ and $K_\infty = O(2, \mathbb{R})$
(see \cite{Bump:1997} Theorem 3.6.1). By \cite{Flath:1979} there are local representations $(\pi_v, V_v)$ for $GL(2, \mathbb{Q}_v)$ for each place $v$ of $\mathbb{Q}$ such that 
\vskip 5pt
$\bullet$ $\pi_\infty$ is an irreducible and admissible $(\mathfrak{g}, K_\infty)$-module;
\vskip 5pt
$\bullet$ $\pi_v$ is an irreducible and admissible representation for $GL(2, \mathbb{Q}_v)$ for all finite places $v$. Furthermore, for almost all $v$, we know that $V_v$ contains a non-zero $K_v$-fixed vector where $K_v  = GL(2, \mathbb{Z}_v),$ and
 $$(\pi, V)\; \cong \;\bigotimes_v' \, (\pi_v, V_v), \qquad(\text{restricted tensor product}),$$ 
where the restricted tensor product is taken with respect to some choice of 
non-zero $K_v$-fixed vectors in all but finitely many of the spaces $V_v.$ 
(Different choices may give rise to different restricted tensor products, but the representations
obtained are all isomorphic to one another and to $(\pi, V).$)

We shall use the existence and uniqueness of Whittaker models for $(\pi, V)$ and for each of the 
local representations $(\pi_v, V_v)$  (see \cite{Bump:1997} section 3.5).  
For each place $v$, let  $\mathcal{W}(\pi_v, e_v)$ denote the Whittaker space, corresponding to $(\pi_v, V_v)$ and the additive character $e_v$  introduced at the beginning of this section.   
For each place $v$ such that  $(\pi_v, V_v)$ contains a $K_v$-fixed vector, 
the space of $K_v$-fixed vectors is one dimensional (see \cite{Bump:1997} Theorems 2.4.2 , 4.6.2),
and contains a unique element which takes the value $1$ on all of $K_v$ (the 
existence of such an element in the 
non-archimedean 
case is proved in \cite{Bump:1997}, Proposition 3.5.2;  existence in the Archimedean case
can be proved along the same lines but we do not need it here).

Let $\mathcal W_{\text{tensor}}$ denote the restricted tensor product of the spaces $\mathcal W( \pi_v, e_v)$
with respect to the $K_v$-fixed vectors which take the value $1$ on $K_v.$  
Then pointwise multiplication gives an isomorphism between this space and the unique 
Whittaker model of the original representation $\pi.$ 

Indeed, 
suppose that $g = \{g_v\}_v\in GL(2, \mathbb{A})$ and that 
$\otimes _v W_v$ is an element of $\mathcal W_{\text{tensor}},$ so that 
for each $v,$ the Whittaker function  $W_v$ is an element of $\mathcal W( \pi_v, e_v),$ and  $W_v(k_v) =1$ for all but finitely 
many $v$ (for all $k_v \in K_v).$ 
Then the infinite product  $\prod_vW_v(g_v)$ is convergent, because all but finitely many of 
its terms are $1.$ 

Let $\prod_v \mathcal{W}(\pi_v, e_v)$ be the space of complex valued functions on $GL(2, \mathbb{A})$ spanned by $\prod_v W_v(g_v)$ where $W_v(g_v)\in \mathcal{W}(\pi_v, e_v)$ such that $W_v(k_v)=1$ for all $k_v\in GL(2, \mathbb{Z}_v)$ for almost all $v< \infty$.  (i.e., where $\otimes_v W_v \in \mathcal W_{\text{tensor}}.$) 

Then $$\bigotimes_v' \; (\pi_v, V_v)\; \cong \; \prod_v \mathcal{W}(\pi_v, e_v)$$ and $\prod_v\mathcal{W}(\pi_v, e_v)$ is a global Whittaker function space. 
\vskip 10pt
Let
$$\mathcal{W}(\pi, e) := \left\{W_\phi(g):=\int_{\mathbb{Q}\backslash \mathbb{A}}\phi\left(\begin{pmatrix} 1 & u \\ 0 & 1\end{pmatrix} g\right)e(-u)du, \text{ for all } g\in GL(2, \mathbb{A})\;\bigg|\;\phi\in V\right\}.$$
Then $\mathcal{W}(\pi, e)$ is a Whittaker model isomorphic to $(\pi, V)$. By the global uniqueness of Whittaker models, 
$$(\pi, V)\cong \mathcal{W}(\pi, e) =\prod_v\mathcal{W}(\pi_v, e_v) \; \cong \;\bigotimes_v' \,(\pi_v, V_v).$$
It follows from uniqueness of the ``local new vector'' at each place \cite{Casselman:1973} that 
 the element of the restricted tensor product $\bigotimes_v V_v$
corresponding to 
  the element $f_{_{\text{adelic}}}$ of $V$ is a pure tensor $\otimes_{v\le\infty}\; \xi_v.$
  It follows that 
  $W_f (g) =    \prod_v W_v(g_v),$ for some  $W_v(g_v)\in \mathcal{W}(\pi_v, e_v),\; (v\le \infty)$ such that $W_v(k_v)=1$ for all $k_v\in GL(2, \mathbb{Z}_v)$ for almost all $v< \infty$.
  The functions $W_v \; (v\le \infty)$ 
  are unique up to scalar.    
  We must show that for each $v \le \infty,$ 
  we can take $W_v = W_{f,v}.$
  
  Because $A(\infty, 1) \ne 0,$ it 
  follows that $W_f( i_\infty(g_\infty)) \ne 0$
  for some $g_\infty \in GL(2, \R),$ and thence that $W_p(I_2) \ne 0$ for each $p < \infty.$  (Here, $I_2$ is the $2\times 2$ identity matrix.)  We may then normalize $W_p$ so that
  its value at $I_2$ is $1,$ and 
  the equality $W_\infty = W_{f, \infty}$ is immediate.  Further, 
    $W_p(g_p) = W_f( i_\infty(g_\infty)i_p(g_p))/W_f( i_\infty(g_\infty)),$ for all $g_p \in GL(2, \Q_p),$ and any $g_\infty \in GL(2, \R)$ such that $W_f( i_\infty(g_\infty))\ne 0.$  From this it is immediate that $W_\infty = W_{f, \infty}.$ 
  \end{proof}
  Theorem \ref{t:FactorizationOfWhittaker}
  is not ``sharp'' in the sense that
 its conclusion is satisfied by the adelic lifts of many forms which are not new, and 
 by many adelic automorphic forms which 
 are not adelic lifts.  
  Combining a careful analysis of the proof 
  of theorem \ref{t:FactorizationOfWhittaker}
  with the strong multiplicity one theorem 
  (lemma 3.1 of \cite{Langlands:1980})
  permits us to sharpen the result.
  \begin{remarks}
  (1) Clearly the adelic lift $f_{_{\text{adelic}}}$ can be defined for a classical Maass form 
  which is not new.  It follows from the strong multiplicity one theorem 
   that the adelic lift generates an irreducible 
  representation of $GL(2, \A)$ if an only 
  if $f$ is a linear combination of oldforms which are all obtained from the same newform.
  
  (2)  
  If $\phi$ is any adelic cusp form $GL(2, \A) \to \C$, then  one may consider the 
 corresponding Whittaker function
  $$W_\phi(g) := \int_{\mathbb{Q}\backslash\mathbb{A}}f_{_{\text{adelic}}}\left(\begin{pmatrix} 1 & u \\ 0 & 1\end{pmatrix} g\right)e(-u)du,$$ 
  and (again by strong multiplicity one) the following conditions are equivalent:
 \begin{itemize}
 \item There exist local Whittaker functions 
 $W_v \in \mathcal{W}(\pi_v, e_v)$ for $v\le \infty$ such that $W_\phi = \prod_v W_v.$
 \item The automorphic form $\phi$ 
 generates an irreducible cuspidal automorphic representation $(\pi, V)$ of $GL(2, \A),$ and 
 corresponds to a pure tensor under the 
 isomorphism $(\pi, V) \to \otimes_v' (\pi_v,V_v).$ 
 \end{itemize}
 
 (3) The formulae 
 $$W_\infty(g_\infty) = W( i_\infty(g_\infty)),
 \qquad  W_p( g_p) = \lim_{y_\infty \to 1} 
 \frac{W(i_\infty\left( \bspm y_\infty &0 \\0&1 \espm i_p (g_p) \right)}{
  W(i_\infty\left( \bspm y_\infty &0 \\0&1 \espm\right)},
 $$
 are valid whenever $W_p(I_2) \ne 0$ for all $p<\infty,$ or, equivalently, whenever
 there exists $g_\infty \in GL(2, \R)$ such that 
 $W( i_\infty( g_\infty)) \ne 0.$
  \end{remarks}
  
%%%%%%%%%%%%%%%%%%%%%%
%\section{Some cusp representatives}\label{s:CuspReps}
%%%%%%%%%%%%%%%%%%%%
\section{A classical consequence of the tensor product theorem}\label{s:ProofOfMain}

The following theorem may be viewed as a translation of the tensor product theorem into a statement about Fourier coefficients of Maass forms. 
It provides  a partial answer to Stark's question.
\begin{theorem}\label{t:3.8} Fix a positive integer $N = q_1^{e_1}\cdots q_{h}^{e_h}$ where  $q_1^{e_1}, \cdots, q_h^{e_h}$ are powers of distinct primes.  Let $S$ be a set of  inequivalent cusps  for $\Gamma_0(N)$. Fix an integer $k \ge 1$ and let $f$ be a normalized Maass-Hecke newform of weight $k$, type $\nu$, level $N$ and  character $\chi = \prod_{1\le i \le h} \chi_i$, where $\chi_i$ is a Dirichlet character$\pmod{q_i^{e_i}}$ for $1\le i\le h.$
 Fix one cusp  $\mathfrak a\in S$ and let $M$ be a positive integer such that   
	$$\begin{aligned} & \epsilon M+\mu_{\mathfrak a} = \epsilon m_{\mathfrak a}p_1^{m_1}\cdots p_n^{m_n}\cdot q_1^{m'_1}\cdots  q_h^{m'_h}, \\
	&  \hskip 100pt \big(\text{for } m_i, m_j' \in \mathbb Z, \; m_i > 0,\; \text{with} \; 1\le i \le n,\; 1 \le j \le h\big),\end{aligned}$$
	where $p_1, \ldots, p_n$ are distinct primes which do not divide $N$. 
	
	Then for each $i=1, \ldots, h$,  there exist a unique cusp $\mathfrak b_i\in S$ and a unique integer $1\leq j_i<m_{\mathfrak b_i}$ such that
	$$\gamma_{\mathfrak b_i}\begin{pmatrix} 1 & j_i \\ 0 & 1\end{pmatrix}=:\begin{pmatrix} a_i & b_i\\ c_i & d_i\end{pmatrix},$$
	$$\gamma_{\mathfrak b_i}\begin{pmatrix} 1 & j_i \\ 0 & 1\end{pmatrix} \begin{pmatrix} \epsilon M_i & 0 \\ 0 & 1\end{pmatrix} \gamma_{\mathfrak a}^{-1} = \begin{pmatrix} a_{q_i} & b_{q_i}\\ c_{q_i} & d_{q_i}\end{pmatrix}, \qquad \left(\text{for} \;M_i :=\frac{\epsilon M+\mu_{\mathfrak a}}{\epsilon m_{\mathfrak a}q_i^{m'_i}}\in\mathbb Q\right),$$
	where $\gamma_{\mathfrak b_i}\in SL(2, \mathbb Z)$ with $\gamma_{\mathfrak b_i}\infty = \mathfrak b_i$. For all $1\le u \le h$ with $u\neq i$, we have $\left(\begin{smallmatrix} a_i & b_i\\ c_i & d_i\esm\in \Gamma_0(q_u^{e_u})$. Let $\delta_i:=\prod_{u\neq i} q_i^{\max(0, -m_u')}$ and $\delta_i'$ be an integer such that $\delta_i'\delta_i\equiv 1\pmod{q_i^{e_i}}$. Then we have $\delta_i a_{q_i}, \; \delta_i b_{q_i}, \; \delta_i c_{q_i}, \; \delta_id_{q_i}\in\mathbb Z$ and $\delta_i c_{q_i}\equiv 0\pmod{q_i^{e_i}}$.
	
	Let $A(\mathfrak a, \epsilon M)$ denote the ${\epsilon M}^{th}$ Fourier coefficient of $f$ as in Proposition~\ref{p:FourierWhittakerExp}. Then we have
	$$\begin{aligned} &A(\mathfrak a, \epsilon M) = A(\infty, \epsilon) \prod_{i=1}^n A\left(\infty, p_i^{m_i}\right)\\
	&\hskip 30pt\cdot \prod_{i=1}^h \left(A\Big(\mathfrak b_i, m_{\mathfrak b_i}q_i^{m'_i}-\mu_{\mathfrak b_i}\Big) \, e_\infty(q_i^{m'_i}j_i)\left(\prod_{u\neq i, \, u=1}^h \chi_u(d_i)^{-1}\right)\chi_i\Big(\delta_i\delta_i'd_{q_i}\Big)^{-1}\right)\end{aligned}$$
if $m_{\mathfrak b_i}q_i^{m'_i}-\mu_{\mathfrak b_i}\in \mathbb Z$ for all $i=1, \ldots, h$. 
Otherwise $A(\mathfrak a, \epsilon M)=0$.  
\end{theorem}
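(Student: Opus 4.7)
The plan is to compute $W_f(g)$ in two ways for a carefully chosen $g \in GL(2,\mathbb{A})$: using Theorem~\ref{t:GlobalWhittaker} to obtain $A(\mathfrak{a},\epsilon M)$ times the archimedean Whittaker function, and using the factorization $W_f = \prod_v W_{f,v}$ from Theorem~\ref{t:FactorizationOfWhittaker} together with Corollary~\ref{c:LocalWhittaker} to obtain an explicit product of local factors. Equating the two expressions and dividing through by the common factor $W_{\epsilon k/2,\nu-1/2}(4\pi y_\infty)$ will yield the stated identity. The vanishing condition $m_{\mathfrak{b}_i}q_i^{m_i'}-\mu_{\mathfrak{b}_i}\in\mathbb{Z}$ is inherited directly from the support condition in Corollary~\ref{c:LocalWhittaker}: if it fails for any $i$, then $W_{f,q_i}(g_{q_i}) = 0$, so $W_f(g) = 0$, hence $A(\mathfrak{a},\epsilon M) = 0$.

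Construct $g$ place by place. At $\infty$, take $g_\infty = \bspm \epsilon & 0\\0&1\espm\bspm y_\infty & 0\\0&1\espm$; at each $p_i \nmid N$ contributing to $M$, take $g_{p_i} = \bspm p_i^{-m_i} & 0\\0&1\espm$; at all other primes not dividing $N$, take $g_p = I_2$. At each $q_i \mid N$, choose $g_{q_i}$ so that the local Corollary~\ref{c:LocalWhittaker} decomposition produces $|y_{q_i}|_{q_i}^{-1} = q_i^{m_i'}$ together with the cusp $\mathfrak{b}_i$ and the integer $j_i$; concretely, the identity $\gamma_{\mathfrak{b}_i}\bspm 1 & j_i\\0&1\espm\bspm \epsilon M_i & 0\\0&1\espm \gamma_\mathfrak{a}^{-1} = \bspm a_{q_i} & b_{q_i}\\c_{q_i} & d_{q_i}\espm$ of the theorem statement, after the indicated $\delta_i$-scaling to clear denominators, supplies the local $k_{q_i,0} \in K_0(N)$ and determines $g_{q_i}$. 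With this choice the global Iwasawa data for $g$ produces cusp $\mathfrak{a}$ together with $t = \prod_p|y_p|_p^{-1} = \prod_i p_i^{m_i}\prod_i q_i^{m_i'}$, so that $\epsilon m_\mathfrak{a} t - \mu_\mathfrak{a} = \epsilon M$. Corollary~\ref{c:LocalWhittaker} then delivers $W_{f,\infty}(g_\infty) = A(\infty,\epsilon)W_{\epsilon k/2,\nu-1/2}(4\pi y_\infty)$, $W_{f,p_i}(g_{p_i}) = A(\infty,p_i^{m_i})$, and $W_{f,q_i}(g_{q_i}) = A(\mathfrak{b}_i,m_{\mathfrak{b}_i}q_i^{m_i'}-\mu_{\mathfrak{b}_i})\,e_\infty(q_i^{m_i'}j_i)\,\widetilde{\chi}_{_{\text{idelic}}}(k_{q_i,0})$, while Theorem~\ref{t:GlobalWhittaker} yields $W_f(g) = A(\mathfrak{a},\epsilon M)\,W_{\epsilon k/2,\nu-1/2}(4\pi y_\infty)$.

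The main obstacle is the bookkeeping needed to identify the character contribution $\prod_i\widetilde{\chi}_{_{\text{idelic}}}(k_{q_i,0})$ with $\prod_i\bigl(\prod_{u\neq i}\chi_u(d_i)^{-1}\cdot\chi_i(\delta_i\delta_i' d_{q_i})^{-1}\bigr)$. For each pair $(i,u)$ with $u\neq i$, the theorem asserts $\gamma_{\mathfrak{b}_i}\bspm 1 & j_i\\0&1\espm = \bspm a_i & b_i\\c_i & d_i\espm \in \Gamma_0(q_u^{e_u})$; because the additional factor $\bspm \epsilon M_i & 0\\0&1\espm\gamma_\mathfrak{a}^{-1}$ is a $q_u$-adic unit in its diagonal (modulo $q_u^{e_u}$), the $q_u$-component of $k_{q_i,0}$ has bottom-right entry congruent to $d_i$, producing the factor $\chi_u(d_i)^{-1}$ (the inverse arising from the side on which $k_{q_i,0}$ sits in the local Iwasawa decomposition). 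At the prime $q_i$ itself, the entries of $\bspm a_{q_i} & b_{q_i}\\c_{q_i} & d_{q_i}\espm$ may fail to be $q_i$-integral because of denominators $q_u^{m_u'}$ with $m_u' < 0$ appearing in $M_i$; the integer $\delta_i$ clears these denominators so that $\delta_i\bspm a_{q_i} & b_{q_i}\\c_{q_i} & d_{q_i}\espm$ lies in $\Gamma_0(q_i^{e_i})$, and the modular inverse $\delta_i'\pmod{q_i^{e_i}}$ is needed to apply $\widetilde{\chi}_{q_i}$ consistently on the scaled matrix, producing $\chi_i(\delta_i\delta_i' d_{q_i})^{-1}$. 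Verifying that the scaled matrix indeed lands in $I_{q_i,N}$ and that the various unit adjustments cancel cleanly across all primes is where the main algebraic work lies; collecting all contributions across $i\in\{1,\ldots,h\}$ then reproduces the full formula.
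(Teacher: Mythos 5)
Your proposal follows essentially the same route as the paper's proof: choose an adele $g$ whose Iwasawa data encodes $\epsilon M$ relative to the cusp $\mathfrak a$, compute $W_f(g)$ once via Theorem~\ref{t:GlobalWhittaker} and once via Theorem~\ref{t:FactorizationOfWhittaker} combined with Corollary~\ref{c:LocalWhittaker}, and equate the two expressions, with the vanishing condition inherited from the local support conditions exactly as you describe. The only slip is the exponent at the primes $p_i\nmid N$: to land in the support of Corollary~\ref{c:LocalWhittaker} and produce the factor $A(\infty,p_i^{m_i})$ you need $y_{p_i}=p_i^{m_i}$ (so that $|y_{p_i}|_{p_i}^{-1}=p_i^{m_i}\in\mathbb Z$), not $y_{p_i}=p_i^{-m_i}$.
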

%\vskip 10pt
%{\bf Proof:} 
\begin{proof}
Fix one cusp  $\mathfrak{a}\in S$. Then there exists  $\gamma_{\mathfrak a}\in SL(2, \mathbb Z)$ such that $\gamma_{\mathfrak a} \infty = \mathfrak a.$
For $y=\{y_v\}_v\in\mathbb A^\times$, $y_\infty=1$, let $t=\prod_p|y_p|_p^{-1}$ and for each $q\mid N$, let
	$$k_q := \begin{pmatrix} ty_q^{-1} & 0 \\ 0 & 1\end{pmatrix}\gamma_{\mathfrak a}^{-1}.$$ 
Then $k_q\in GL(2, \mathbb Z_q)$ for every $q\mid N$. Let $\epsilon =\pm 1$ and take 
	$$g=\begin{pmatrix} y & 0 \\ 0 & 1\end{pmatrix} i_{diag}\left(\begin{pmatrix} \epsilon & 0 \\ 0 & 1\end{pmatrix}\right)\prod_{q\mid N}i_q(k_q)\in GL(2, \mathbb{A}).$$ 
Then
$$i_{_{\text{\rm finite}}}(\gamma_\mathfrak{a})\prod_{q\mid N}i_q\left(\begin{pmatrix} t^{-1}y_q & 0 \\ 0 & 1\end{pmatrix} k_q\right)\in K_0(N)$$
since 
$$\gamma_\mathfrak{a}\begin{pmatrix} t^{-1}y_q & 0 \\ 0 & 1\end{pmatrix} k_q= \begin{pmatrix} 1 & 0 \\ 0 & 1\end{pmatrix}\in I_{q, N}\text{ for each } q\mid N$$
and $\gamma_{\mathfrak a}\in GL(2, \mathbb Z_p)$ for any prime $p$.
\vskip 5pt
By Theorem~\ref{t:FactorizationOfWhittaker}, we know that 
$$\begin{aligned} W_f&\left(\begin{pmatrix} y & 0 \\ 0 & 1\end{pmatrix} i_{diag}\left(\begin{pmatrix} \epsilon & 0 \\ 0 & 1\end{pmatrix}\right)\prod_{q\mid N}i_q(k_q)\right)\\
 & = W_{f, \infty}\left(\begin{pmatrix} \epsilon & 0 \\ 0 & 1\end{pmatrix}\right)\left(\prod_{p\nmid N}W_{f, p}\left(\begin{pmatrix} y_p & 0 \\ 0 & 1\end{pmatrix} \begin{pmatrix} \epsilon & 0 \\ 0 & 1\end{pmatrix}\right)\right)\\
 &\cdot \left(\prod_{q\mid N}W_{f, q}\left(\begin{pmatrix} y_q & 0 \\ 0 & 1\end{pmatrix}\begin{pmatrix} \epsilon & 0 \\ 0 & 1\end{pmatrix} k_q\right)\right).\end{aligned}$$
Then by Corollary~\ref{c:LocalWhittaker} for each fixed prime $q\mid N$, there exists a cusp  $\mathfrak{b}_q\in S$ and an integer $0\leq j_q<m_{\mathfrak{b}_q}$ which are uniquely determined by 
	$$i_{_{\text{\rm finite}}}\left(\gamma_{\mathfrak{b}_q}\begin{pmatrix} 1 & j_q \\ 0 & 1\end{pmatrix}\right)i_q\left(\begin{pmatrix} |y_q|_qy_q & 0 \\ 0 & 1\end{pmatrix} \begin{pmatrix} \epsilon & 0 \\ 0 & 1\end{pmatrix} k_q\right)=:k_{q, 0}\in K_0(N).$$ 
	This is equivalent to 	
	$$(k_{q, 0})_q = \gamma_{\mathfrak{b}_q}\left(\begin{smallmatrix} 1 & j_q\\ 0 & 1\esm \left(\begin{smallmatrix} \epsilon t|y_q|_q & 0 \\ 0 & 1\esm \gamma_\mathfrak{a}^{-1}\in I_{q, N};$$
	for each prime $q'\mid N$ and $q'\neq q$, 
	$$(k_{q, 0})_{q'} = \gamma_{\mathfrak b_q}\begin{pmatrix} 1 & j_q\\ 0 & 1\end{pmatrix} \in I_{q', N};$$ 
	and for each prime $p\nmid N$, 
	$$(k_{q, 0})_p = \gamma_{\mathfrak b_q}\begin{pmatrix} 1 & j_q\\ 0 & 1\end{pmatrix} \in GL(2, \mathbb Z_p).$$

It follows form Corollary~\ref{c:LocalWhittaker}, that 
	$$\begin{aligned} & W_f\left(\begin{pmatrix} y & 0 \\ 0 & 1\end{pmatrix} i_{diag}\left(\begin{pmatrix} \epsilon & 0 \\ 0 & 1\end{pmatrix}\right)\prod_{q\mid N}i_q(k_q)\right) = A(\mathfrak{a}, \epsilon m_\mathfrak{a} t-\mu_\mathfrak{a})W_{\frac{\epsilon k}{2}, \nu-\frac{1}{2}}(4\pi)\\
& \\
	& =\begin{cases} A(\infty, \epsilon)W_{\frac{\epsilon k}{2}, \; \nu-\frac{1}{2}}(4\pi)\left(\prod\limits_{q\mid N}A\Big(\mathfrak{b}_q, m_{\mathfrak{b}_q}|y_q|_q^{-1} -\mu_{\mathfrak{b}_q}\Big) \, 
	e_\infty\left(|y_q|_q^{-1}j_q\right) \, \widetilde{\chi}_{_{\text\rm {idelic}}}(k_{q, 0})\right)\\
	\cdot \prod\limits_{p\nmid N}A\left(\infty, |y_p|_p^{-1}\right), \\
	\hskip 40pt\text{ if }\epsilon m_\mathfrak{a}t-\mu_\mathfrak{a}\hbox{ and } \; m_{\mathfrak{b}_q}|y_q|_q^{-1}-\mu_{\mathfrak{b}_q}\in\mathbb{Z} \text{ for all } q\mid N;\\
	0, \;\;\;\;\text{ otherwise.}\end{cases}\end{aligned}$$

	Therefore, 
	$$\begin{aligned} & A(\mathfrak{a}, \epsilon m_\mathfrak{a} t-\mu_\mathfrak{a})\\
	& \\
	& =\begin{cases} A(\infty, \epsilon)\left(\prod\limits_{q\mid N}A\Big(\mathfrak{b}_q, m_{\mathfrak{b}_q}|y_q|_q^{-1} -\mu_{\mathfrak{b}_q}\Big) \, e_\infty\left(|y_q|_q^{-1}j_q\right)\,\widetilde{\chi}_{_{\text{\rm idelic}}}(k_{q, 0})\right)\\
	\cdot \prod\limits_{p\nmid N}A\left(\infty, |y_p|_p^{-1}\right), \\
	\hskip 30pt\text{ if }\epsilon m_\mathfrak{a}t-\mu_\mathfrak{a}, \; m_{\mathfrak{b}_q}|y_q|_q^{-1}-\mu_{\mathfrak{b}_q}\in\mathbb{Z} \text{ for all } q\mid N,\\
	0, \;\;\;\;\text{ otherwise.}\end{cases}\end{aligned}$$

 Now let $\epsilon m_\mathfrak a t-\mu_\mathfrak a = \epsilon M\in\mathbb Z$. Then $\epsilon m_\mathfrak a t = \epsilon M+\mu_\mathfrak a$. Since $t=\prod_p |y_p|_p^{-1}$, 
	$$\epsilon M+\mu_{\mathfrak a} =\epsilon m_\mathfrak ap_1^{m_1}\cdots p_n^{m_n} q_1^{m'_1}\cdots q_h^{m'_h}$$
	for distinct primes $p_1, \ldots, p_n$ (different from $q_1, \ldots, q_h$), and $m_1, \ldots, m_n, \; m'_1, \ldots, m'_h\in \mathbb Z$. This means that for each prime $p$, we take
		$$y_p=\begin{cases} p_i^{m_i}, & \hbox{ if } p=p_i \hbox{ for some } i=1, \ldots, n,\\
						q_i^{m'_i}, & \hbox{ if } p=q_i\hbox{ for some } i=1, \ldots, h,\\
						1, & \hbox{ otherwise.}\end{cases}$$
It follows that if $m_i<0$, then $A(\infty, p_i^{m_i})=0$. So assume that   $m_1, \ldots, m_n>0$. Then for each $i=1, \ldots, h$, we use the uniqueness property above for choosing $\mathfrak b_i$ and integers $1\leq j_i< m_{\mathfrak b_i}$.  
%\qed
\end{proof}

For later purposes, we now describe a convenient set of representatives
for the equivalence classes of cusps in the case when $N=q^e$ is a prime power.
\vskip 10pt \noindent
%{\bf Lemma 4.1} {\it 
\begin{lemma}\label{l:4.1}
If  $q$ is a prime, and $e$ a positive integer, then   the set 
$$
  \{0,\infty\} \, \cup \, \left\{  \frac{1}{c_1q^l} \;\bigg|\;
	\lower6pt\hbox{\vbox{\hbox{$1 \le l < e, \ 
	\gcd(c_1,q) = 1,$}{\hbox{$\;\; 1 \le c_1 < \min(q^l,q^{e-l}),$}}}}
	\right\} 
$$
is a set of representatives for the $\Gamma_0(q^e)$-equivalence classes of cusps.
%}
\end{lemma}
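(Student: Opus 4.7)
The plan is to combine a coarse classification of cusps by the $q$-adic valuation of their denominators with the construction of explicit $\Gamma_0(q^e)$-matrices realizing the claimed equivalences, and to confirm the bijection by a count.

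First I would note that for any $\gamma = \bpm \alpha & \beta \\ q^e\gamma' & \delta \ebpm \in \Gamma_0(q^e)$ the determinant relation forces $\alpha\delta \equiv 1 \pmod{q^e}$, so $q\nmid \alpha$ and $q\nmid\delta$. Consequently, for a cusp $a/c$ in lowest terms with $v_q(c) = l < e$, the new denominator $q^e\gamma' a + \delta c$ has $q$-part exactly $q^l$, while the new numerator $\alpha a + \beta c$ is coprime to $q$ (since $q\nmid \alpha a$); reduction to lowest terms therefore preserves $l$. Thus $\min(v_q(c),e)$ is a $\Gamma_0(q^e)$-invariant taking the values $0,1,\dots,e$.

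Next I would dispose of the extreme cases. If $\gcd(c,q)=1$, Bezout furnishes $\delta,\gamma'$ with $c\delta + aq^e\gamma' = 1$, so $\bspm c & -a \\ q^e\gamma' & \delta \espm \in \Gamma_0(q^e)$ sends $a/c$ to $0$. Symmetrically, if $v_q(c)\ge e$, picking $\alpha,\beta$ with $\alpha a + \beta c = 1$ gives $\bpm \alpha & \beta \\ -c & a \ebpm \in \Gamma_0(q^e)$ (here $q^e\mid c$), mapping $a/c$ to $\infty$. Inequivalence of $0$ and $\infty$ under $\Gamma_0(q^e)$ is immediate from the invariant.

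The crux is the middle range $0 < l < e$. Writing the cusp as $a/(q^l c_0)$ with $\gcd(c_0,q)=1$ and $\gcd(a,qc_0)=1$, I would set up the equation $\gamma\cdot\frac{a}{q^l c_0} = \frac{1}{c_1 q^l}$. Clearing denominators and invoking $\det\gamma = 1$ reduces the problem to choosing $\alpha \in \Z$ satisfying the Bezout relation $\alpha a \equiv 1 \pmod{c}$ together with the auxiliary congruence $c_1\alpha \equiv c_0 \pmod{q^{e-l}}$. The Bezout condition pins $\alpha$ down modulo $q^l$ (and modulo $c_0$), while the auxiliary congruence pins it down modulo $q^{e-l}$; a short CRT argument, with the two sub-cases $l\le e-l$ and $l>e-l$ treated separately, shows that a common $\alpha$ exists if and only if
$$c_1 \equiv a c_0 \pmod{q^{\min(l,e-l)}},$$
and that the remaining entries $\beta,\gamma',\delta$ of $\gamma$ can then be recovered from the Bezout pair. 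This proves simultaneously that every cusp of level $l$ is equivalent to some $\frac{1}{c_1 q^l}$ and that the representative $c_1$ is uniquely determined modulo $q^{\min(l,e-l)}$, matching the range $1\le c_1 < \min(q^l,q^{e-l})$, $\gcd(c_1,q)=1$ stated in the lemma.

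Finally, tallying the listed representatives gives $2 + \sum_{l=1}^{e-1}\phi(q^{\min(l,e-l)}) = \sum_{d\mid q^e}\phi(\gcd(d,q^e/d))$, which is the standard count of $|\Gamma_0(q^e)\bs(\Q\cup\{\infty\})|$, so no orbit is missed and no pair among the listed ones coincides. The main obstacle I anticipate is the bookkeeping in the middle-range step: in the subcase $l \le e-l$ the auxiliary congruence is strictly stronger than what Bezout gives and forces compatibility mod $q^l$, whereas for $l>e-l$ it is strictly weaker and is absorbed by Bezout; reconciling both into the single clean congruence modulo $q^{\min(l,e-l)}$ is where care is required.
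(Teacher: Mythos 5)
Your proposal is correct, and it reaches the classification by a genuinely different route from the paper. The paper first identifies $\Gamma_0(q^e)\bs SL(2,\Z)$ with $\P^1(\Z/q^e\Z)$ and then computes the orbits of the right $\Gamma_\infty$-action there, reducing the middle range $0<l<e$ to the action of the unit subgroup $U_l=\{u\equiv 1 \pmod{q^l}\}$ on $(\Z/q^{e-l}\Z)^\times$, whose orbits are the residue classes modulo $q^{\min(l,e-l)}$. You instead work directly with fractions $a/c$ in lowest terms: the $q$-adic valuation of the denominator as a $\Gamma_0(q^e)$-invariant, explicit Bezout matrices for the cases $l=0$ and $l\geq e$, and for $0<l<e$ a CRT compatibility analysis of the two congruences $\alpha a\equiv 1\pmod{q^lc_0}$ and $c_1\alpha\equiv c_0\pmod{q^{e-l}}$, yielding the same invariant $c_1\equiv ac_0\pmod{q^{\min(l,e-l)}}$. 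Both arguments are sound; note only that your closing count is logically redundant, since your middle-range analysis already gives both existence and uniqueness of the representative (and if you did want to lean on the count, the standard formula $\sum_{d\mid N}\phi(\gcd(d,N/d))$ is itself usually derived by the double-coset method, so nothing would be saved). What the paper's formalism buys is reusability: the identification of cusps with $\Gamma_\infty$-orbits on $\P^1(\Z/N\Z)$, including the extra datum of a coset representative $\gamma_\fa$ rather than just a cusp, is exactly what is needed later for Lemmas~\ref{l:6.2}, \ref{l:6.5} and \ref{l:6.6} and the Chinese-remainder gluing over the primes dividing $N$; your more elementary argument proves the present lemma cleanly but would have to be supplemented to extract that structure.
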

%\vskip 10pt 
%{\bf Proof:}  
\begin{proof}
It is well known and easily verified that the group $SL(2, \mathbb Z)$ permutes the set of 
cusps transitively.  It follows that $\Gamma_0(q^e)$-equivalence classes of cusps 
 are naturally identified with double cosets $\Gamma_0(q^e) \backslash SL(2, \mathbb Z) / \widetilde\Gamma_\mathfrak a,$ where $\tilde \Gamma_\mathfrak a$ denotes the stabilizer  in $SL(2, \mathbb Z)$ of any fixed cusp 
 $\mathfrak a.$  It is convenient to employ this identification with $\mathfrak a = \infty.$ 
 As remarked above, the stabilizer $\widetilde \Gamma_\infty=\Gamma_\infty$ is 
 independent of $N$ and 
 given explicitly by 
  $$\Gamma_\infty = \left\{ \left.
 \begin{pmatrix} \epsilon & n \\ 0 & \epsilon \end{pmatrix} \right|
 \epsilon \in \{ \pm 1\} , \; n \in \mathbb Z
 \right\}.$$
 Now, it follows easily from the definition of $\Gamma_0(q^e)$ that 
 $\Gamma_0(q^e) \backslash SL(2, \mathbb Z)$  is naturally identified with 
 $$B^1( \mathbb Z / q^e \mathbb Z) \backslash SL(2, \mathbb Z/ q^e\mathbb Z),$$ where 
 $B^1(R)$ denotes the group of upper triangular matrices with entries in the ring $R$ 
 and determinant equal to $1.$  
 The projective line $\mathbb P^1 ( \mathbb Z/ q^e \mathbb Z)$ 
 is given by 
 $$\left\{ (x_0, x_1) \in (\Z/q^e\Z)^2 \;|\; \langle x_0, x_1 \rangle  = \Z/q^e\Z\right\}/\sim.$$
 Here, $ \langle x_0, x_1 \rangle$ denotes the ideal generated by $x_0$ and
 $x_1,$ and $\sim$ denotes the equivalence relation given by 
 $$ (x_0, x_1) \sim (x_0',x_1') \iff 
 (x_0',x_1') =  (\lambda x_0, \lambda x_1) , \;\text{some} \;
 \lambda \in (\Z/ q^e \Z)^\times.$$
 We write $[x_0:x_1]$ for the equivalence class of $(x_0, x_1).$
 The group   $SL(2, \mathbb Z/ q^e\mathbb Z),$ has 
 a natural right action on $\mathbb P^1(\mathbb Z/ q^e \mathbb Z)$ given by 
 $$[x_0:x_1] \cdot \begin{pmatrix} a& b \\ c& d \end{pmatrix} 
 = [ax_0 + cx_1: bx_0+dx_1], 
 \qquad [x_0: x_1] \in \mathbb P^1( \mathbb Z/ q^e \mathbb Z) ,   \begin{pmatrix} a& b \\ c& d \end{pmatrix} 
 \in SL(2, \mathbb Z/ q^e \mathbb Z).$$
 Clearly, the stabilizer of $[0:1]$ is $B^1( \mathbb Z/ q^e \mathbb Z).$  Thus 
 $\P^1( \Z/q^e \Z)$ may be identified with the coset space
 $\Gamma_0(q^e) \backslash SL(2, \mathbb Z).$
 It follows that 
 $\Gamma_0(q^e) \backslash SL(2, \mathbb Z) / \Gamma_\infty$ is in one-to-one 
 correspondence with orbits for the action of $\Gamma_\infty$ on  $\mathbb P^1 ( \mathbb Z/ q^e \mathbb Z)$
 via inclusion into $SL(2, \mathbb Z)$ and then projection to $SL(2, \mathbb Z/q^e \mathbb Z).$ 
 Note that the coset in $\Gamma_0(q^e)\backslash SL(2, \mathbb Z)$ which corresponds to the 
 element $[x_0:x_1] \in \mathbb P^1(\mathbb Z/ q^e \mathbb Z)$ consists of all matrices
 $\left(\begin{smallmatrix} a& b \\ c& d \end{smallmatrix}\right)$  such that 
 $(c, d) \equiv (\lambda x_0, \lambda x_1) \pmod{q^e}$ for some $\lambda \in (\mathbb Z/ q^e \mathbb Z)^\times.$

 It is clear that  
 $$\mathbb P^1( \mathbb Z/ q^e \mathbb Z) \;=\;
 \Big\{ [1:x_1] \;\Big |\; x_1 \in \mathbb Z/ q^e \mathbb Z\Big\} 
 \; \cup \; \Big \{ [x_0: 1] \;\Big |\; x_0 \in \mathbb Z/  q^e \mathbb Z- (\mathbb Z/ q^e \mathbb Z)^\times \Big\},$$
 and that the action of $\Gamma_\infty$ permutes the elements of 
 $\{ [1:x_1] \;|\; x_1 \in \mathbb Z/ q^e \mathbb Z\}$ transitively.  
 It follows that the $\Gamma_0(q^e)$-cosets corresponding to these elements comprise a 
 single double coset in $\Gamma_0(q^e) \backslash SL(2, \mathbb Z) / \Gamma_\infty$
 which is represented by 
$\left(\begin{smallmatrix} 0&-1\\ 1&0 \esm.$
This matrix maps $\infty$ the the cusp $0.$ 
 
 We study the action 
 of $\Gamma_\infty$ on $\big\{ [x_0: 1] \;\big |\; x_0 \in \mathbb Z/  q^e \mathbb Z- (\mathbb Z/ q^e \mathbb Z)^\times \big\}.$
 Writing $x_0 = q^l c_1$ with $1\le l \le e, 1\leq c_1< q^{e-l} $ and $\gcd (c_1, q)=1,$
we compute 
$$ \left [ q^lc_1 : 1\right]
\begin{pmatrix}
	1 & n \\ 0 & 1 \end{pmatrix}
  = \left[q^lc_1 : q^lc_1n+1 \right] = 
\left[q^lc_1 \overline{(q^lc_1n+1)} : 1\right ],$$
where $\overline{a}$ denotes $a^{-1}$ modulo $q^{e-l}.$
From this we see at once that each part of the partition 
$$\bigcup_{l = 1}^e\Big\{\left [q^l c_1: 1\right] \;\Big |\; (c_1, q)=1, \; 1\leq c_1< q^{e-l} \Big\} $$
 is preserved by the action of $\Gamma_\infty,$ and that  $\Gamma_\infty$  acts trivially 
 on $$\Big\{ [q^l c_1: 1] \;\Big |\; (c_1, q)=1, \; 1\leq c_1< q^{e-l} \Big\} $$
 whenever $e-l\le  l,$ for in this case $\overline{(q^lc_1n+1)} \equiv 1 \pmod{q^{e-l}},$
whence  $[q^lc_1 \overline{(q^lc_1n+1)} : 1 ]= [q^lc_1:1].$  
When $l = e,  \left\{ [q^l c_1: 1] \;|\; (c_1, q)=1, \; 1\leq c_1< q^{e-l} \right\} = [0:1],$ which corresponds to 
the element of $\Gamma_0(q^e)\backslash SL(2, \mathbb Z)$ represented by the identity matrix.
The corresponding cusp is $\infty.$
For other values of $l \ge \frac e2,$ we have  shown that for each $c_1$ such that 
$1\leq c_1< q^{e-l} $ and $\gcd (c_1, q)=1,$
the coset in 
  $\Gamma_0(q^e)\backslash SL(2, \mathbb Z)$ corresponding to $[ q^lc_1 : 1],$ is in fact a double 
  coset  $\Gamma_0(q^e)\backslash SL(2, \mathbb Z)/ \Gamma_\infty.$
This coset is represented by the matrix 
$\left(\begin{smallmatrix} 1&0 \\ q^l c_1 & 1 \esm $ which maps $\infty$ to $\frac 1{q^lc_1}.$

When $e-l > l,$ the action of $\left(\begin{smallmatrix}
1 & n \\ 0 & 1 \esm$  on $\left\{ [q^l c_1: 1] \;|\; (c_1, q)=1, \; 1\leq c_1< q^{e-l} \right\} $
factors through the function 
$n \mapsto \overline{(q^lc_1n+1)}.$  This maps $\mathbb Z$ into  the group $U_l$ of units in $\mathbb Z/ q^e \mathbb Z$ which are equivalent to $1$ modulo 
$q^l.$   It is easy to see that this function is surjective.  
More precisely $n \mapsto c_1n$ is a bijection $\mathbb Z/ q^{e-l}\mathbb Z\to \mathbb Z/ q^{e-l}\mathbb Z,$
while 
$m \mapsto 1+q^l m$ is a bijection $ \mathbb Z/ q^{e-l}\mathbb Z \to U_l,$ 
and $\overline{\phantom{X}}$ is a bijection $U_l \to U_l.$

Thus we are reduced to studying the action of $U_l$ on $(\mathbb Z/ q^{e-l}\mathbb Z)^\times.$ 
Clearly $c_1u \equiv c_1 \pmod{q^l}$ for all $c_1 \in (\mathbb Z/ q^{e-l}\mathbb Z)^\times,$ and 
$u \in U_l.$  Equally clearly, if $c_1, c_1' \in (\mathbb Z/ q^{e-l}\mathbb Z)^\times,$ and 
$c_1 \equiv c_1' \pmod{q^l}$ then $c_1' \overline{c_1} \in U_l.$  It follows that the orbits for the 
action of $U_l$ on $(\mathbb Z/ q^{e-l}\mathbb Z)^\times,$ are precisely the residue classes modulo
$q^l.$   This completes the proof.
%  \qed
\end{proof}

For a prime $q$ and positive integer $e$, fix $N=q^e$. From Lemma~\ref{l:4.1}, we can take the complete set of inequivalent cusps for $\Gamma_0(q^e)$ as
	\begin{equation}\label{e:CuspReps}
	S \; = \; \{0,\infty\} \, \cup \,\left\{  \frac{1}{c_1q^l} \;\bigg|\;
	\lower6pt\hbox{\vbox{\hbox{$1 \le l < e, \ 
	\gcd(c_1,q) = 1,$}{\hbox{$\;\; 1 \le c_1 < \min(q^l,q^{e-l}),$}}}}
	\right\} .
	\end{equation}
For each cusp $\mathfrak{a}\in S$, we have the following.
$$\begin{cases}
	\mathfrak{a} = 0:& \gamma_\mathfrak{a} = \left(\begin{smallmatrix} 0 & -1 \\ 1 & 0\esm, \sigma_\mathfrak{a} = \left(\begin{smallmatrix} 0 & -1 \\ 1 & 0 \esm\left(\begin{smallmatrix} q^{\frac{e}{2}} & 0 \\ 0 & q^{-\frac{e}{2}}\esm, \; g_{\mathfrak{a}}=\left(\begin{smallmatrix}1 & 0 \\ q^e & 1\esm, \; m_\mathfrak{a} = q^e, \\
	& \\
	\mathfrak{a} =\frac{1}{cq^l}, \; l\leq \frac{e}{2}:& \gamma_\mathfrak{a} = \left(\begin{smallmatrix} 1 & 0 \\ q^l c & 1\esm, \; \sigma_\mathfrak{a} = \left(\begin{smallmatrix} 1 & 0 \\ q^lc & 1\esm\left(\begin{smallmatrix} q^{\frac{e}{2}-l} & 0 \\ 0 & q^{l-\frac{e}{2}}\esm,\\	
	& g_\mathfrak{a} = \left(\begin{smallmatrix} 1-q^{e-l}c & q^{e-2l} \\ -q^ec^2 & 1+q^{e-l}c\esm, \; m_\mathfrak{a} = q^{e-2l},\\
	& \\	
\mathfrak{a} = \frac{1}{cq^l}, \; l>\frac{e}{2}: & \gamma_\mathfrak{a} = \left(\begin{smallmatrix} 1 & 0 \\ q^l c & 1\esm , \; \sigma_\mathfrak{a} = \left(\begin{smallmatrix} 1 & 0 \\ q^lc & 1\esm, \\
	& g_\mathfrak{a} = \left(\begin{smallmatrix} 1-q^lc & 1,\\ -q^{2l}c^2 & 1+q^l c\esm, \; m_\mathfrak{a} = 1.
\end{cases}$$

\noindent
From the above table, we can easily see that $\mu_0=\mu_\infty=0$ since $\widetilde{\chi}(g_0) = \widetilde{\chi}(g_\infty)=1$. If $\mathfrak{a}\neq 0, \infty$, then $\mathfrak{a} = \frac{1}{cq^l}$, with $1\leq c < \min(q^l, q^{e-l})$ and $(c, q)=1$ and 
	$$\widetilde{\chi}(g_\mathfrak{a}) = \chi\big(1+c\cdot\max(q^l, q^{e-l})\big) = \chi\big(1+\max(q^l, q^{e-l})\big)^c = e^{2\pi i \mu_\mathfrak{a}}.$$
	So $\widetilde{\chi}(g_\mathfrak{a})^{\min(q^l, q^{e-l})} = 1$.

	\vskip10pt
\noindent
%{\bf Lemma 5.2 } {\it 
\begin{lemma}\label{l:5.2}
Fix an integer $e \ge 1.$ Let $\chi$ be a  Dirichlet character of prime power level $N=q^e$. Let $\chi_{_{\text{\rm trivial}}}$ be the trivial character modulo $q^e$.  Let $\chi_0$ be a primitive Dirichlet character of prime power level $N_0=q^{e_0}$  (with $0\leq e_0\leq e$) such that $\chi=\chi_0\cdot \chi_{_{\text{\rm trivial}}}$.  Then the following hold.
\begin{itemize}
\item For each integer $1\leq l<e$ with $\max(q^l, q^{e-l})\geq q^{e_0}$,  and any  cusp $\mathfrak a=\frac{1}{cq^l}$ with $1\leq c <\min(q^l, q^{e-l})$ and $(c, q)=1,$  the cusp parameter $\mu_\fa$ is zero.    
\item For each integer $1\leq l<e$ with $\max(q^l, q^{e-l})<q^{e_0}$, there exists a cusp $\mathfrak a_0=\frac{1}{c_0q^l}$ with cusp parameter $\mu_{\mathfrak a_0}=\min\left(q^{e_0-l}, \, q^{e_0-e+l}\right)^{-1}$.  Here $1\leq c_0 <\min (q^{e_0-l},\, q^{e_0-e+l})$ and $(c_0, q)=1$. 
Then
	$$\widetilde\chi(g_{\mathfrak a_0}) = e^{2\pi i \min\left(q^{e_0-l}, \, q^{e_0-e+l}\right)^{-1}},$$
and for any cusp $\mathfrak a=\frac{1}{cq^l}$ with $1\leq c <\min(q^l,  q^{e-l})$ and $(c, q)=1$, there exists a unique integer $1\leq r< \min\left(q^{e_0-l}, \, q^{e_0-e+l}\right)$ with $(r, q)=1$ such that
	$$c \; \equiv \; rc_0 \;\Big(\hskip -9pt\mod{\min\left(q^{e_0-l}, \, q^{e_0-e+l}\right)}\Big)\;\text{ and } \;\mu_{\mathfrak{a}}=r\mu_{\mathfrak{a}_0}.$$
	\end{itemize}
%	}
\end{lemma}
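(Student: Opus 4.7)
The plan is to unify both bullets by computing $\widetilde\chi(g_\fa)$ directly from the explicit formulas for $g_\fa$ in the table preceding the lemma. For a cusp $\fa = 1/(cq^l)$ with $1 \le l < e$ and $(c,q) = 1$, a case distinction on $l \le e/2$ versus $l > e/2$ shows that in both cases
$$\widetilde\chi(g_\fa) \; = \; \chi(1 + q^M c), \qquad M := \max(l, e-l).$$
Since $1+q^M c$ is coprime to $q$, we may replace $\chi$ by $\chi_0$. If $M \ge e_0$ then $1+q^Mc \equiv 1 \pmod{q^{e_0}}$, so $\chi_0(1+q^Mc) = 1$ and $\mu_\fa = 0$, which gives the first bullet immediately.

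For the second bullet assume $M < e_0$ and set $m' := e_0-M = \min(e_0-l,\, e_0-e+l)$. The key arithmetic observation is $2M \ge e_0$: indeed $M \ge e/2$ by definition and $e \ge e_0$ by hypothesis, so $2M \ge e \ge e_0$. Binomial expansion then gives
$$(1+q^M)^c \; \equiv \; 1 + cq^M \pmod{q^{e_0}}$$
for every integer $c$, because $\binom{c}{k}q^{kM}$ vanishes mod $q^{e_0}$ for all $k \ge 2$ (the case $k=2$ uses $2M \ge e_0$, with the extra factor of $2$ in $\binom{c}{2}$ handling the borderline case when $q=2$; higher $k$ follow from $kM > e_0$). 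Consequently
$$\chi_0(1 + q^M c) \; = \; \chi_0(1+q^M)^c,$$
i.e.\ $\chi_0(1+q^M\,\cdot\,)$ is ``linear'' in $c$.

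Because $\chi_0$ has conductor exactly $q^{e_0}$ and $1+q^M$ generates the cyclic group of units in $(\Z/q^{e_0}\Z)^\times$ that are $\equiv 1 \pmod{q^M}$, which has order $q^{m'}$ (cyclicity requires $M \ge 2$ when $q=2$, a consequence of $M \ge e/2 \ge e_0/2$ except in the explicitly checkable case $e=e_0=2$), the value $\chi_0(1+q^M)$ is a primitive $q^{m'}$-th root of unity, say $e^{2\pi i\beta/q^{m'}}$ with $(\beta,q)=1$. Choose $c_0 \in \{1,\dots,q^{m'}-1\}$ with $c_0\beta \equiv 1 \pmod{q^{m'}}$; then automatically $(c_0,q)=1$, $\chi_0(1+q^M c_0) = e^{2\pi i/q^{m'}}$, and $\fa_0 = 1/(c_0 q^l)$ has $\mu_{\fa_0} = 1/q^{m'}$. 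For any other cusp $\fa = 1/(cq^l)$ in the given range, define $r \in \{1,\dots,q^{m'}-1\}$ by $r \equiv c c_0^{-1} \pmod{q^{m'}}$; uniqueness is immediate, $(r,q)=1$ follows from $(c,q)=(c_0,q)=1$, and $c \equiv rc_0 \pmod{q^{m'}}$ holds by construction. Linearity then yields
$$\chi_0(1+q^M c) \; = \; e^{2\pi i \beta c/q^{m'}} \; = \; e^{2\pi i r/q^{m'}} \; = \; e^{2\pi i r\mu_{\fa_0}},$$
so $\mu_\fa = r\mu_{\fa_0}$. The main technical subtlety is the binomial collapse when $q=2$, where tracking the $2$-adic valuation of $\binom{c}{k}$ requires some care; everything else is bookkeeping with the cyclic group $U^{(M)}/U^{(e_0)}$.
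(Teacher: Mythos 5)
Your proof is correct and follows essentially the same route as the paper's: both arguments rest on the fact that $M=\max(l,e-l)\ge e/2$ makes $c\mapsto 1+cq^{M}$ a homomorphism onto the subgroup of units congruent to $1$ modulo $q^{M}$ (the paper phrases this as a group isomorphism modulo $q^{e}$, you as the binomial collapse modulo $q^{e_0}$), so that the restriction of $\chi$ to that subgroup is a character of a cyclic $q$-group whose order is controlled by the conductor $q^{e_0}$. The only remark worth making is that your caveats about $q=2$ are unnecessary: $\binom{c}{2}q^{2M}$ is already an integer multiple of $q^{2M}$ with $2M\ge e_0$, so no extra factor of $2$ is needed, and cyclicity of the relevant quotient follows directly from the same collapse.
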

\begin{proof}
%\vskip 10pt
%\noindent
%{\bf Proof }
For any integer $m$ with $1\leq m<e,$ let $U_m = \{ a \in (\Z/q^e\Z)^\times\mid a \equiv 1 \pmod q^m\}.$
This is a subgroup.  In fact, it is the kernel of the natural projection from $ (\Z/q^e\Z)^\times$
to $ (\Z/q^m\Z)^\times.$  The integer $e_0$ is the smallest integer such that $\chi$ factors through this 
projection.  Thus the restriction $\chi|_{U_m}$ of $\chi$ to $U_m$ is trivial iff $m\ge e_0.$

Use the set of representatives for cusps $S$ in \eqref{e:CuspReps}. 
As we see from the table above, the lower right entry $d_\fa$ of the generator $g_\fa$ 
 is an element of 
$U_{\max{l, e-l}}.$ 
Since $\mu_\fa$ is defined so that $e^{2\pi i\mu_\fa} = \chi( d_\fa),$ we 
need to study  the restriction $\chi|_{U_{\max(l, e-l)}}.$

If $e_0 \le \max(l, e-l),$ this restriction is trivial and $\mu_\fa$ is zero, regardless of $c.$
The function  $c \mapsto 1+cq^{\max(l, e-l)}$ is an 
isomorphism $\Z/q^{\min(l,e-l)}\Z \to U_{\max(l,e-l)}.$  
Composing with 
$\chi,$ we obtain a homomorphism $\varphi$ from $\Z/q^{\min(l,e-l)}\Z$ to $\C.$  
For any $m$ with $\max(l,e-l) \le m \le e,$ the preimage of $U_m$ in  $\Z/q^{\min(l,e-l)}\Z$
is the cyclic subgroup generated by $q^{m-\max(l,e-l)},$ and these 
are the only subgroups of $\Z/q^{\min(l,e-l)}\Z.$
Since the kernel of $\chi$ contains $U_{e_0},$ but not $U_{e_0-1},$ it follows that 
the kernel of $\chi|_{U_{\max{l, e-l}}}$ is precisely equal to $U_{e_0},$ 
and that its image is the $(q^{e_0-\max( l, e-l)})^{\text{\rm th}}$ roots of unity.
Furthermore, $\varphi$ factors through the natural projection $\Z/q^{\min(l, e-l)}\Z
\to \Z/q^{e_0-\max(l,e-l)}.$  For $c_0,$ we take the least positive element 
of the residue class which maps to $e^{2\pi i \min\left(q^{e_0-l}, \, q^{e_0-e+l}\right)^{-1}}.$
%\qed
\end{proof}

	\vskip 10pt
	The following theorem provides a partial  answer to Stark's question in the case of prime power level.
\vskip 5pt\noindent
\begin{theorem}\label{t:main}
%{\bf (Main theorem)}   
Let $q^e$ be a fixed prime power.  Let $f$ be a Hecke-Maass newform of  level $q^e$, character $\chi\pmod{q^e}$, weight $k$, type $\nu$ for $\Gamma_0(q^e)$. Assume $\chi = \chi_0\cdot \chi_{_{\text{\rm trivial}}}$ where $\chi_{_{\text{\rm trivial}}}$ is the trivial character modulo $q$ and $\chi_0$ is a primitive Dirichlet character of prime power level $q^{e_0}$  (with $0\leq e_0\leq e$). 
For $\mathfrak a \in S$ and $n \in \mathbb Z$, let $A(\mathfrak a, n)$ denote the $n^{th}$ Fourier coefficient of $f$ at the cusp $\mathfrak a$ as in Proposition~\ref{p:FourierWhittakerExp}. Assume that $A(\infty, 1)=1$.  
 For any $\mathfrak a \in S$ and an arbitrary non-negative integer $M$  let
  $$\boxed{\epsilon M+\mu_{\mathfrak a} \; = \; \epsilon m_{\mathfrak a}p_1^{m_1}\cdots p_n^{m_n}\cdot q^m}$$
where  $\epsilon = \pm1,$	 $m_1, \ldots, m_n$ are positive integers,  $m \in \mathbb Z$,    $p_1, \ldots, p_n$ are distinct primes different from $q$, and $\mu_\mathfrak a$ is the cusp parameter given in \eqref{e:cuspParameter}. Set $M_0 = p_1^{m_1}\cdots p_n^{m_n}.$ 
   For each cusp $\mathfrak a = \frac{1}{cq^l} \in S$, there exists a unique cusp $\frac{1}{c' q^l} \in S$, 
determined by the conditions
 $$1\le c'\le \text{\rm min}\left(q^{e-l}, q^l\right), \qquad c'\epsilon M_0 \equiv c \left(\hskip -6pt\mod\text{\rm min}\left(q^{e-l}, q^l\right)\right).$$ 
If $l \le e/2$ then there is, in addition, a 
 unique integer $j$   determined by the conditions$$
0 \le j < q^{e-2l},\qquad
cc'j \; \equiv \; (c'\epsilon M_0 -c)\cdot q^{-l}\hskip -4pt\pmod{q^{e-2l}}.$$
Let $\alpha\ge 0$ denote the greatest integer such that $q^\alpha \mid M.$  
 Then $m, \mu_\fa$ and $A( \fa, \epsilon M)$ are given as follows.
 \begin{description}
\item[$\underline{\bullet\; \mathfrak a = \infty}$]  In this case $\mu_\infty = 0$,  $m = \alpha,$ and 
	$$A(\infty, \epsilon M) =  A(\infty, \epsilon)A(\infty, p_1^{m_1})\cdots A(\infty, p_n^{m_n})A(\infty, q^m).$$
\item[$\underline{\bullet\; \mathfrak a=0}$]  In this case $\mu_0 = 0,$ $m = \alpha - e,$ and 

	$$A(0, \epsilon M) =  A(\infty, \epsilon M_0)A(0, q^{e+m})\chi(\epsilon M_0)^{-1}.$$

\item[$\underline{\bullet\; \mathfrak a=\frac{1}{cq^l},\hbox{ and } \mu_\mathfrak a\neq 0}$] 
In this case
$e_0 > \max(l, e-l),$ 
$m=  -e_0+l,$ and  
$$A\left(\frac{1}{cq^l}, \; \epsilon M\right)=\begin{cases} A(\infty, \epsilon M_0)A\left(\frac{1}{c'q^l}, 0\right)e_\infty\left(q^{m}\cdot j\right)\chi\left(jc'q^l+1\right)^{-1},\\
\hskip 30pt\hbox{ if } l\leq \frac{e}{2},\\
	A(\infty, \epsilon M_0)A\left(\frac{1}{c' q^l},\, 0\right), \\
	\hskip 30pt\hbox{ if } l>\frac{e}{2},
	\end{cases}$$
	Furthermore,  the cusp parameter of $\frac{1}{c'q^l}\in S$ is $\min(q^{e_0-l}, q^{e_0-e+l})^{-1}$. 
	(If $e_0=e,$ i.e., if $\chi$ is primitive, then there is a unique cusp $\fa_0=\frac{1}{c_0q^l}\in S$ having this property, so that
	$c'=c_0,$  independently of $c,\epsilon,$ and $M$!)

\item[$\underline{\bullet\; \mathfrak a=\frac{1}{cq^l}, \hbox{ and }\mu_\mathfrak a=0 }$]
In this case 
$m = \alpha - \max(e-2l, 0)$ and
	$$A\left(\frac{1}{cq^l},\; \epsilon M\right) = \begin{cases} A(\infty, \epsilon M_0) A\left(\frac{1}{c'q^l}, \; q^{e-2l+m}\right)e_\infty\left(q^m\cdot j\right)\chi\left(jc'q^l+1\right)^{-1},\\
	\hskip 30pt\hbox{ if } l\leq \frac{e}{2},\\
		A(\infty, \epsilon M_0)A\left(\frac{1}{c'q^l}, \; q^m\right),\\
		\hskip 30pt\hbox{ if }l>\frac{e}{2}.\end{cases}$$
	\end{description}
		\end{theorem}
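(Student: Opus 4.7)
The plan is to specialize Theorem~\ref{t:3.8} to the prime-power setting $h=1$, $q_1=q$, $\chi_1=\chi$, and then unpack the matrix and character data explicitly using the cusp representatives of Lemma~\ref{l:4.1}, the table that follows it, and the cusp-parameter analysis of Lemma~\ref{l:5.2}. In this specialization the product $\prod_{u\ne i}$ appearing in Theorem~\ref{t:3.8} is empty, so $\delta_1=\delta_1'=1$, the condition $\delta_i c_{q_i}\equiv 0\pmod{q_i^{e_i}}$ reduces to $c_q\equiv 0\pmod{q^e}$, and the character factor collapses to $\chi(d_q)^{-1}$. Also $M_1=M_0$, and Hecke multiplicativity at $\infty$ (Proposition~\ref{p:FourierWhittakerExp}) gives $A(\infty,\epsilon)\prod_i A(\infty,p_i^{m_i})=A(\infty,\epsilon M_0)$, which will account for everything in the stated formulas except for the final Fourier-coefficient factor and the non-Hecke data.

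The first real step is to identify $\mathfrak{b}\in S$ and $0\le j<m_\mathfrak{b}$ by explicitly computing
\[
\gamma_\mathfrak{b}\bspm 1 & j \\ 0 & 1 \espm\bspm \epsilon M_0 & 0 \\ 0 & 1 \espm \gamma_\mathfrak{a}^{-1}
\]
for each $\mathfrak{a}\in S$, using the table following Lemma~\ref{l:4.1}, and imposing $c_q\equiv 0\pmod{q^e}$. For $\mathfrak{a}=\frac{1}{cq^l}$, a short $q$-adic valuation argument on $c_q$ rules out all candidates except $\mathfrak{b}=\frac{1}{c'q^l}$ with the same $l$; vanishing mod $q^e$ then reduces to $c'\epsilon M_0\equiv c\pmod{\min(q^l,q^{e-l})}$ (uniquely solvable for $c'$ in the required range) together with, when $l\le e/2$, the further congruence $cc'j\equiv(c'\epsilon M_0-c)q^{-l}\pmod{q^{e-2l}}$ determining $j$; when $l>e/2$ one has $m_\mathfrak{b}=1$, forcing $j=0$. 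The cases $\mathfrak{a}=\infty$ and $\mathfrak{a}=0$ degenerate to $\mathfrak{b}=\infty$ with $d_q=1$ and $\mathfrak{b}=0$ with $d_q=\epsilon M_0$ respectively, both with $j=0$.

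The final step is to determine the Fourier index $m_\mathfrak{b} q^m-\mu_\mathfrak{b}$. When $\mu_\mathfrak{a}=0$ (automatic for $\mathfrak{a}\in\{0,\infty\}$ and, by Lemma~\ref{l:5.2}, for $\mathfrak{a}=\frac{1}{cq^l}$ with $e_0\le\max(l,e-l)$), also $\mu_\mathfrak{b}=0$, the index is $m_\mathfrak{b} q^m$, and the first, second, and fourth displayed formulas follow directly after reading off $m$ from $\epsilon M+\mu_\mathfrak{a}=\epsilon m_\mathfrak{a} M_0 q^m$. When $\mu_\mathfrak{a}\ne 0$ (so $e_0>\max(l,e-l)$), writing $\mu_\mathfrak{a}=r/q^s$ with $s=e_0-\max(l,e-l)$, the same relation forces $m=l-e_0$ and $r\equiv\epsilon M_0\pmod{q^s}$; combined with $c\equiv rc_0\pmod{q^s}$ from Lemma~\ref{l:5.2} this gives $c\equiv\epsilon M_0 c_0\pmod{q^s}$, which refines the matrix congruence $c'\epsilon M_0\equiv c\pmod{q^l}$ to $c'\equiv c_0\pmod{q^s}$, hence $\mu_\mathfrak{b}=\mu_{\mathfrak{a}_0}=q^{-s}$ and $m_\mathfrak{b} q^m-\mu_\mathfrak{b}=0$, yielding the third formula. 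The main obstacle will be precisely this cusp-parameter verification---tracking the two congruences on $c'$ (matrix-level mod $q^l$ versus cusp-parameter-level mod $q^s$) and confirming they cohere to pin down the correct base cusp, in particular the primitive-character statement that $c'=c_0$ is independent of $c$, $\epsilon$, and $M$.
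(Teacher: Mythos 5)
Your plan is correct and follows essentially the same route as the paper's proof: specialize Theorem~\ref{t:3.8} to $h=1$, identify $\mathfrak b$ and $j$ by the explicit matrix computations (the paper's displays \eqref{e:5.3} and \eqref{e:5.4}), and use Lemma~\ref{l:5.2} to pin down the cusp parameters and hence the Fourier index $m_{\mathfrak b}q^m-\mu_{\mathfrak b}$. The only (immaterial) variation is in the $\mu_{\mathfrak a}\ne 0$ case, where you deduce $\mu_{\mathfrak b}=q^{-s}$ from the congruence $c'\equiv c_0\pmod{q^s}$ directly, while the paper reaches the same conclusion via the relation $\epsilon M_0\mu_{\mathfrak b}-\mu_{\mathfrak a}\in\mathbb Z$ extracted from $d_{\mathfrak a}\equiv d_{\mathfrak b}^{\epsilon M_0}$.
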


%\begin{proof}{\bf(Theorem~\ref{t:main})}
%\noindent
%\begin{proof}
%{\bf Proof of Theorem~\ref{t:main} : }
\begin{proof}
Fix $\mathfrak a\in S.$  Let $M$ be a positive integer and $\epsilon=\pm 1.$ 
Write 	$$\epsilon M+\mu_{\mathfrak a} = \epsilon m_{\mathfrak a}p_1^{m_1}\cdots p_n^{m_n} q^m,$$
	for distinct primes $p_1, \cdots, p_n\neq q$, and integers $m_1, \cdots, m_n, m$ with $m_1, \cdots, m_n>0$. 
	It follows from Lemma~\ref{l:5.2} and the discussion preceding it that $\mu_\fa=0$ except when 
	$\fa = \frac 1{cq^l}$ with $\max(l,e-l) < e_0.$  Also, in all cases, $m_{\mathfrak a}$ is a power of $q.$
	Thus, when $\mu_\fa=0,$ the expression 
	$$M= p_1^{m_1}\cdots p_n^{m_n}\cdot (m_\fa q^m)$$
	is the prime factorization of $M.$  The expressions for $m$ in terms of $\alpha$ in the various
	cases now follow easily from the values of $m_\fa$ tabulated above.

When $\fa = \frac 1{cq^l}$ with $\max(l,e-l) < e_0,$ it follows from Lemma~\ref{l:5.2} that $\mu_\fa$ is a 
rational number of the form $\frac r{\min\left(q^{e_0-l}, \, q^{e_0-e+l}\right)}=\frac r{ q^{e_0-\max(l,e-l)}}$ with $gcd(r,q)=1.$  Consequently $\epsilon M+\mu_{\mathfrak a}$
is a rational number with the same denominator, and a numerator which is congruent to $r \pmod{\min\left(q^{e_0-l}, \, q^{e_0-e+l}\right)}.$  Since in this case $m_\fa = q^{\max(l, e-l)-l},$ we obtain $m= -e_0+l.$ 

	Let
	$$M_0:= \frac{\epsilon M+\mu_{\mathfrak a}}{\epsilon m_{\mathfrak a}q^m}=p_1^{m_1}\cdots p_n^{m_n}.$$
Then by Theorem~\ref{t:3.8}, there exists a unique cusp $\mathfrak b\in S$ and a unique integer $1\leq j<m_{\mathfrak b}$ such that 
	$$\gamma_\mathfrak b \begin{pmatrix} 1 & j \\ 0 & 1\end{pmatrix}  =: \begin{pmatrix} a & b\\ c & d\end{pmatrix}\in SL(2, \mathbb Z),$$
	$$\gamma_{\mathfrak b}\begin{pmatrix} 1 & j \\ 0 & 1\end{pmatrix} \begin{pmatrix} \epsilon M_0 & 0 \\ 0 & 1\end{pmatrix} \gamma_{\mathfrak a}^{-1} =: \begin{pmatrix} a_q & b_q \\ c_q & d_q\end{pmatrix},$$
	where $c_q \equiv 0 \pmod{q^e}$. 
Then
	$$A(\mathfrak a, \epsilon M) = A(\infty, \epsilon)\prod_{i=1}^n A(\infty, p_i^{m_i})\left(A(\mathfrak b, m_{\mathfrak b}q^m-\mu_{\mathfrak b})e_\infty(q^m\cdot j)\chi(d_q)^{-1}\right).$$
	(we shall show that $m_{\mathfrak b}q^m-\mu_{\mathfrak b}$ is always integral). 

\vskip 10pt	
{\bf (1)} If $\mathfrak a=\infty$, then $\mu_\mathfrak a=0$ and $m_\mathfrak a=1$. Since $\gamma_\infty \left(\begin{smallmatrix} \epsilon M_0 & 0 \\ 0 & 1\esm \gamma_\infty^{-1} = \left(\begin{smallmatrix} \epsilon M_0 & 0 \\ 0 & 1\esm \in I_{q, N}$. So  $\mathfrak b=\infty$.  Since $m = \alpha > 0$ and $\mu_\mathfrak b =0,$ it follows at once that $m_{\mathfrak b}q^m-\mu_{\mathfrak b}\in \Z.$  Furthermore,
	$$A(\infty, \epsilon M) = A(\infty, \epsilon)\prod_{i=1}^n A(\infty, p_i^{m_i})\cdot A(\infty, q^m).$$

\vskip 5pt
{\bf (2)} If $\mathfrak a=0$, then $\mu_\mathfrak a=0$ and $m_\mathfrak a=q^e$. Then $\gamma_0\left(\begin{smallmatrix} \epsilon M_0 & 0 \\  0 & 1\esm \gamma_0^{-1} = \left(\begin{smallmatrix} 1 & 0 \\ 0 & \epsilon M_0 \esm\in I_{q, N}$. Therefore,  $\mathfrak b=0$ and $j=0$. 
Once again $\mu_\fb =0.$  Furthermore,  $m_\fb q^m = q^\alpha \in \Z.$
Finally,
	$$A(0, \epsilon M) = A(\infty, \epsilon)\prod_{i=1}^n A(\infty, p_i^{m_i}) \cdot A(0, q^{e+m}) \chi(\epsilon M_0)^{-1}$$ 
	
If $\mathfrak a\neq 0, \infty$ then $\mathfrak a=\frac{1}{cq^l}$ for some  fixed integers $1\leq l<e$
and  $1\leq c<\min(q^l, q^{e-l})$ with $(c, q)=1$.  Also, $m_{\mathfrak a}=\max(q^{e-2l}, 1)$. 
Let us explicitly determine $\fb$ in this case.  
First, assume $l \ge \frac e2.$
Consider the computation
\begin{equation}\label{e:5.3}
\begin{aligned}\gamma_\mathfrak b\begin{pmatrix} \epsilon M_0 & 0 \\ 0 & 1\end{pmatrix} \gamma_{\mathfrak a}^{-1}& =\begin{pmatrix} 1 & 0 \\ c'q^l & 1\end{pmatrix} \begin{pmatrix} \epsilon M_0 \\ 0 & 1\end{pmatrix} \begin{pmatrix} 1 & 0 \\ -cq^l & 1\end{pmatrix}\\
	&=\begin{pmatrix} \epsilon M_0 & 0 \\ q^l(c'\epsilon M_0 -c) & 1\end{pmatrix}.
	\end{aligned}
	\end{equation}
	It is clear that the matrix on the right-hand side is an element of $I_{q,N}$ if and only if
	$c\equiv c'\epsilon M_0 \pmod{q^{e-l}}.$  Thus $\fb =\frac1{c'q^l}$ for this particular 
	value of $c'.$  Referring to the table above, we see  that $m_\fb = m_\fa=1,$ and 
	$d_\fa \equiv d_\fb^{\epsilon M_0} \pmod{q^e},$ whence $\epsilon M_0 \mu_\fb -\mu_\fa \in \Z.$
	
	Now assume $l< \frac e2.$  	Consider the computation
\begin{equation}\label{e:5.4}
\begin{aligned} \gamma_\mathfrak b\begin{pmatrix} 1 & j \\ 0 & 1\end{pmatrix} \begin{pmatrix} \epsilon M_0 & 0 \\ 0  & 1\end{pmatrix} \gamma_\mathfrak a^{-1} & =\begin{pmatrix} 1 & 0 \\ c'q^l & 1 \end{pmatrix} \begin{pmatrix} 1 & j \\ 0 & 1\end{pmatrix} \begin{pmatrix} \epsilon M_0 & 0 \\ 0 & 1\end{pmatrix} \begin{pmatrix} 1 & 0 \\ -cq^l & 1\end{pmatrix}\\
	&=\begin{pmatrix} \epsilon M_0 -jcq^l & j \\ q^l(c'\epsilon M_0 -jc'cq^l-c) & jc'q^l+1\end{pmatrix}.\end{aligned}\end{equation}
	
	It is clear that the matrix on the right-hand side is an element of $I_{q,N}$ if and only if $c'$ and $j$
	are such that 
	$c\equiv c'\epsilon M_0 \pmod{q^{e-l}}$ and $jc'c\equiv  \left(\frac{c'\epsilon M_0-c}{q^l}\right)
	\pmod q^{e-2l}.$
	This shows that $\fb = \frac{1}{c'q^l},$ where $c'$ is the unique solution to $c\equiv c'\epsilon M_0 \pmod{q^{e-l}}$
	in the range $1 \le c' < q^l.$  It follows at once that $m_\fb = m_\fa=q^{e-2l},$ and that 
	$\epsilon M_0 \mu_\fb -\mu_\fa \in \Z.$
	
\vskip 5pt
{\bf (3)} If $\mu_\mathfrak a\neq 0$, then by Lemma~\ref{l:5.2}, $\max(q^l, q^{e-l})<q^{e_0}$ and  
	$$\mu_\mathfrak a = \frac{r}{\min(q^{e_0-l}, q^{e_0-e+l})} 
	,$$
	for some integer $r$ with $1\leq r<\min(q^{e_0-l},q^{e_0-e+l}).$
	Similiarly 
		$$\mu_\fb = \frac{r'}{\min(q^{e_0-l}, q^{e_0-e+l})} 
	,$$
	for some integer $r'$ with $1\leq r'<\min(q^{e_0-l},q^{e_0-e+l}).$
Since 
	$$\epsilon M +\frac{r}{\min(q^{e_0-l}, q^{e_0-e+l})} = \epsilon m_\mathfrak a p_1^{m_1}\cdots p_n^{m_n}q^m=\epsilon M_0 \max(q^{e-2l}, 1)\cdot q^m$$ 
	it follows that $\epsilon M\min(q^{e_0-l}, q^{e_0-e+l})+r = \epsilon M_0 q^{m+e_0-l}$. This implies that  $m=-e_0+l$ and $\epsilon M_0\equiv r\pmod{\min(q^{e_0-l}, q^{e_0-e+l})}$. 
	Since $\epsilon M_0 \mu_\fb -\mu_\fa \in \Z,$ we deduce that 
	$\epsilon M_0 r' \equiv r \pmod{\min(q^{e_0-l},q^{e_0-e+l})},$ and hence that $r'=1.$
	It follows that 
	$\mu_\fb =\min(q^{e_0-l}, q^{e_0-e+l})^{-1}=m_\fb q^m.$
	
\vskip 5pt
$\bullet$ $l\leq e-l$ and $\mu_{\mathfrak a}=r\mu_{\mathfrak a_0}\neq 0$: In this case  it follows from \eqref{e:5.4} 
and the definitions of $c'$ and $j$ that
	$$A\left(\frac{1}{cq^l}, \epsilon M\right) = A(\infty, \epsilon)\prod_{i=1}^n A(\infty, p_i^{m_i})\left(A\left(\frac{1}{c'q^l}, 0\right)e_\infty\left(q^{-e_0+l}\cdot j\right)\chi\left(jc'q^l+1\right)^{-1}\right),$$
	where $c'$ and $j$ are determined by $c$ and $\epsilon M_0$  as above.
	
\vskip 5pt
$\bullet$ $l>e-l$ and $\mu_\mathfrak a=r\mu_{\mathfrak a_0}\neq0$: In this case it follows from \eqref{e:5.3} and the definition of $c'$ that
$$A\left(\frac{1}{cq^l}, \epsilon M\right) = A(\infty, \epsilon)\prod_{i=1}^n A\left(\infty, p_i^{m_i}\right)A\left(\frac{1}{c'q^l}, 0\right),$$
	where  $c'$ is determined by $c$ and $\epsilon M_0$ as above.
	
\vskip 10pt
{\bf (4)} For a fixed integer $1\leq l<e$, take an integer $1\leq c<\min (q^l, q^{e-l})$ and $(c, q)=1$. Let $\mathfrak a=\frac{1}{cq^l}$. 
As shown above, $\mathfrak b = \frac{1}{c'q^l},$ where $c'$ is the unique solution to 
$$1\leq c'<\min(q^l, q^{e-l})\qquad c'\epsilon M_0-c\equiv 0\pmod{\min(q^l, q^{e-l})}.$$
Assume that $\mu_\mathfrak a=0.$ Then $\mu_\mathfrak b=0$ by Lemma~\ref{l:5.2}.  Furthermore $m_\fb =m_\fa
=q^{\max(e-2l,0)}.$ It follows that $q^mm_\fb -\mu_\fb = q^\alpha$ (where $\alpha$ is the highest pwer of $q$ that divides $M$ as before), which is integral.
\vskip 5pt
$\bullet$ $l\leq e-l$ and $\mu_{\mathfrak a}=0$: In this case it follows from \eqref{e:5.4} and the definitions 
of $c'$ and $j$ that 
	$$A\left(\frac{1}{cq^l}, \epsilon M\right) = A(\infty, \epsilon)\prod_{i=1}^n A(\infty, p_i^{m_i}) \left(A(\frac{1}{c'q^l}, q^{e-2l+m})e_\infty(q^m \cdot j)\chi(jc'q^l+1)^{-1}\right).$$
	
\vskip 5pt
 $\bullet$ $l>e-l$ and $\mu_\mathfrak a=0$: In this case it follows from \eqref{e:5.3} and the definition of $c'$ that
	$$A\left(\frac{1}{cq^l}, \; \epsilon M\right) = A(\infty, \epsilon)\prod_{i=1}^n A(\infty, p_i^{m_i})\cdot A\left(\frac{1}{c'q^l}, \; q^m\right).$$
%\qed
\end{proof}

\begin{remark}
It is clear from the proofs that theorems
\ref{t:3.8} and \ref{t:main} are valid not only for 
Maass-Hecke newforms, but whenever the 
factorization $W_f =\prod_vW_{f,v}$ is valid.
\end{remark}

%%%%%%%%%%%%%%%%%%%%%%
\section{Remarks on choices of $\fa$ and $\gamma_\fa$}\label{s:RemarksOnChoices}
As remarked in 
%\S 1
section~\ref{s:Intro}, the Fourier coefficients $A( \fa, n)$ of a Maass form 
$f$ at a cusp $\fa$ actually depend on the matrix $\sigma_\fa,$ or, equivalently, 
the matrix $\gamma_\fa$ used in its definition, and not only on the choice of $\fa.$
Further, while it is intuitively obvious that when considering Fourier 
expansions at various cusps, it is sufficient to consider a maximal set of $\Gamma_0(N)$-inequivalent 
cusps, it is also clear that the choice of representative for each $\Gamma_0(N)$-equivalence
class will influence the precise numbers considered.
In this section we make these dependencies completely explicit and then offer some
remarks on choice of representatives for $N$ not a prime power.

Because we wish to study the dependence of the Fourier coefficients
on the choice of matrix $\gamma_\fa$ used to define them, it is 
necessary to make this dependence explicit.  Thus, we write $A(\gamma_\fa, n)$
rather than $A( \fa, n).$  
%\vskip 10pt \noindent 
%{\bf Lemma 6.1}  {\it 
\begin{lemma}\label{l:6.1}
Suppose that $\mathfrak a$ and $\mathfrak a'$ are two $\Gamma_0(N)$-equivalent 
cusps, and that $\gamma_\mathfrak a, \gamma_{\mathfrak a'}$ are two 
elements of $SL(2, \Z)$ such that $\gamma_\fa \infty = \fa$ and 
$\gamma_{\fa'} \infty = \fa'.$  Let $A(\gamma_\fa, n)$ (resp. $A(\gamma_{\fa'}, n)$), $n \in \Z$ 
 denote 
the Fourier coefficients of a Maass form $f$ at $\fa$ (resp. $\fa'$) defined
using an element $\sigma_\fa$ (resp. $\sigma_{\fa'}$) obtained from 
$\gamma_\fa$ (resp. $\gamma_{\fa'}$) as in 
%\S 1
section~\ref{s:Intro}. 
Then 
$$A(\gamma_{\fa'}, n) = \widetilde\chi(\gamma_0) \cdot e\left( (n+\mu_\fa)\cdot
\frac{j}{\fm_\fa}\right) \cdot A(\gamma_\fa, n),$$
where $\gamma_0 \in \Gamma_0(N)$ and $j \in \Z$ with $0 \le j < m_\fa$ are uniquely determined by the 
condition that 
$$\gamma_{\fa'} = \gamma_0 \cdot \gamma_\fa \cdot \begin{pmatrix} 1&j \\ 0 & 1 \end{pmatrix}.$$
%}
\end{lemma}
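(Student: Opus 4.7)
The plan is to reduce this to a direct computation by first establishing the decomposition of $\gamma_{\fa'}$, then rewriting $\sigma_{\fa'}$ in terms of $\sigma_\fa,$ and finally applying the slash operator and reading off Fourier coefficients.

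\textbf{Step 1: Existence and uniqueness of $\gamma_0$ and $j$.} Since $\fa$ and $\fa'$ are $\Gamma_0(N)$-equivalent, there exists $\gamma \in \Gamma_0(N)$ with $\gamma \fa = \fa'.$ Then $\gamma\gamma_\fa\infty = \fa'= \gamma_{\fa'}\infty,$ so $\gamma_{\fa'}^{-1}\gamma\gamma_\fa\in \Gamma_\infty,$ and we may write $\gamma_{\fa'} = \pm \gamma\gamma_\fa\bpm 1&n\\0&1\ebpm$ for some $n\in\Z.$ The sign can be absorbed into $\gamma,$ and writing $n = qm_\fa+j$ with $0\le j<m_\fa,$ the factor $\gamma_\fa\bpm 1&qm_\fa\\0&1\ebpm\gamma_\fa^{-1}$ is a power of $g_\fa \in \Gamma_0(N)$ and can also be absorbed on the left into $\gamma$ to produce $\gamma_0.$ For uniqueness, if $\gamma_0\gamma_\fa\bpm 1&j\\0&1\ebpm = \gamma_0'\gamma_\fa\bpm 1&j'\\0&1\ebpm,$ then $(\gamma_0')^{-1}\gamma_0 = \gamma_\fa\bpm 1&j'-j\\0&1\ebpm\gamma_\fa^{-1}$ lies in $\Gamma_0(N),$ forcing $m_\fa\mid (j-j'),$ hence $j=j'$ and $\gamma_0 = \gamma_0'.$

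\textbf{Step 2: Relating $\sigma_{\fa'}$ and $\sigma_\fa.$} A short conjugation argument using $\gamma_{\fa'} = \gamma_0\gamma_\fa\bpm 1&j\\0&1\ebpm$ shows that $g_{\fa'} = \gamma_0g_\fa\gamma_0^{-1},$ so $m_{\fa'}=m_\fa$ and (since $\widetilde\chi$ is $\C^\times$-valued) $\mu_{\fa'}=\mu_\fa.$ Using $m_{\fa'}=m_\fa$ and a direct matrix computation,
\[
\sigma_{\fa'} \;=\; \gamma_0\,\gamma_\fa \bpm 1&j\\0&1 \ebpm \bpm \sqrt{m_\fa}&0\\0&\sqrt{m_\fa}^{-1} \ebpm \;=\; \gamma_0\,\sigma_\fa\,\bpm 1 & j/m_\fa \\ 0 & 1\ebpm.
\]

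\textbf{Step 3: Applying the slash operator and reading off coefficients.} Because the slash operator is a right action and $f|_k \gamma_0 = \widetilde\chi(\gamma_0)f$ by \eqref{e:automorphy}, the identity above gives
\[
(f|_k\sigma_{\fa'})(z) \;=\; \widetilde\chi(\gamma_0)\,(f|_k\sigma_\fa)\!\left(z + \tfrac{j}{m_\fa}\right).
\]
Substituting the Fourier-Whittaker expansion from Proposition \ref{p:FourierWhittakerExp} on both sides and using the fact that $\mu_{\fa'}=\mu_\fa$ and $m_{\fa'}=m_\fa,$ the exponential $e^{2\pi i(n+\mu_\fa)x}$ acquires an extra factor $e^{2\pi i(n+\mu_\fa)j/m_\fa} = e\!\left((n+\mu_\fa)\cdot j/m_\fa\right),$ while the Whittaker factor is unchanged. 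Matching the $n$th term yields the claimed identity
\[
A(\gamma_{\fa'},n) \;=\; \widetilde\chi(\gamma_0)\,e\!\left((n+\mu_\fa)\cdot j/m_\fa\right)\,A(\gamma_\fa,n).
\]

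The argument is essentially routine; the only point requiring care is Step 1, where one must correctly absorb the $\pm$ sign and any multiple of $m_\fa$ into $\gamma_0,$ and I expect this small bookkeeping step to be the most error-prone part of the write-up.
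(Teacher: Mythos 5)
Your proof is correct and is exactly the argument the paper intends: the paper's own proof of Lemma~\ref{l:6.1} consists solely of the remark that it ``follows easily from the definitions,'' and your three steps (decomposing $\gamma_{\fa'}=\gamma_0\gamma_\fa\left(\begin{smallmatrix}1&j\\0&1\end{smallmatrix}\right)$, deducing $m_{\fa'}=m_\fa$, $\mu_{\fa'}=\mu_\fa$ and $\sigma_{\fa'}=\gamma_0\sigma_\fa\left(\begin{smallmatrix}1&j/m_\fa\\0&1\end{smallmatrix}\right)$, then comparing Fourier--Whittaker expansions) are precisely that routine verification carried out in full. The bookkeeping in Step 1 (absorbing $-I$ and powers of $g_\fa$ into $\gamma_0$) is handled correctly.
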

%{\bf Proof:}  
\begin{proof}
This follows easily from the definitions.
%\qed
\end{proof}
\vskip 10pt

We would like to extend the idea for choosing explicit representatives for the 
equivalence classes of cusps described in 
%\S 4
Lemma~\ref{l:4.1}.
It is not convenient or necessary to make a  completely explicit, 
choice of cusp representatives.  
It turns out to be sufficient to specify our representatives only modulo a suitable 
power of each prime dividing $N.$ 

For the remainder of 
 this section and the next, 
 we shall employ the following notation.  We take $S$ to be a finite set 
of primes, denoting a general element of $S$ by $q,$ and a general prime 
which is not in $S$ by $p.$  For each element $q$ of $S$ we fix a strictly 
positive integer $e_q,$ and we let $N = \prod_{q \in S } q^{e_q}.$

%\vskip 10pt \noindent 
%{\bf Lemma 6.2} {\it 
\begin{lemma}\label{l:6.2}
For each $q\in S,$ let $\pi_q : \Z/N\Z \to  \Z / q^{e_q} \Z$ denote the 
natural projection.  
The natural map $\Z/N\Z \to \prod_{q \in S} \Z / q^{e_q} \Z$ 
given by $n \mapsto (\pi_q(n) ) _{q\in S}$ 
induces a bijection 
$\P^1(\Z/N\Z) \to \prod_{q\in S} \P^1( \Z/q^{e_q}\Z).$ 
Furthermore, two elements of $\P^1(\Z/N\Z)$ are in the same 
$\Gamma_\infty$-orbit if and only if their images in  $\P^1( \Z/q^{e_q}\Z)$
are in the same $\Gamma_\i$-orbit for all $q\in S.$
%}
\end{lemma}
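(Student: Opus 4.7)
The plan is to reduce both assertions to the Chinese Remainder Theorem at the ring level and then verify that the formation of $\P^1$ and the action of $\Gamma_\i$ are both compatible with the resulting product decomposition. The classical CRT already supplies the ring isomorphism $\Z/N\Z \to \prod_{q\in S}\Z/q^{e_q}\Z$ induced by $n\mapsto(\pi_q(n))_{q\in S},$ so the substantive task is to check that $\P^1$ converts this into a bijection of sets.

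For that I would work from the intrinsic description of $\P^1(R),$ for any commutative ring $R,$ as the quotient of the set of pairs $(x_0,x_1)\in R^2$ with $\langle x_0,x_1\rangle=R$ by the diagonal action of $R^\times$ (the exact definition recalled in the proof of Lemma~\ref{l:4.1}). Two routine observations make the decomposition automatic: a pair generates $R_1\times R_2$ as an ideal if and only if each of its component pairs generates the respective factor, and $(R_1\times R_2)^\times = R_1^\times \times R_2^\times.$ Applying this fact (iteratively, or to the full CRT product at once) yields the claimed bijection.

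For the orbit statement I would first note that $-I$ acts trivially on $\P^1$ of any ring, so the $\Gamma_\i$-action factors through the subgroup of translations $\bspm 1&n\\0&1\espm,$ whose action $[x_0:x_1]\mapsto[x_0:nx_0+x_1]$ is visibly componentwise under the CRT decomposition. The ``only if'' direction follows immediately, since translation by $n$ reduces to translation by $n\bmod q^{e_q}$ in each factor. The ``if'' direction is where the only small computation lies: given translation parameters $n_q$ and scalars $\lambda_q\in(\Z/q^{e_q}\Z)^\times$ implementing the equivalence of the images of $[x_0:x_1]$ and $[x_0':x_1']$ factor by factor, I would first assemble $\lambda\in(\Z/N\Z)^\times$ via CRT on the unit groups, and then solve for a single integer $n$ by applying CRT to the congruences $nx_0\equiv n_qx_0\pmod{q^{e_q}},$ i.e., $n\equiv n_q\pmod{q^{e_q}/\gcd(x_0,q^{e_q})}.$ Since these moduli have pairwise disjoint prime support, a simultaneous solution always exists, and the resulting pair $(\lambda,n)$ then implements the $\Gamma_\i$-equivalence in $\P^1(\Z/N\Z).$

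I expect the main obstacle, such as it is, to lie in keeping the two layers of equivalence (the scalar $R^\times$-action and the translation action) straight when combining the local data, rather than in any serious conceptual difficulty; the key point is that the scalar $\lambda$ can be pinned down first, independently for each factor, after which the congruences constraining the translation parameter $n$ still have pairwise coprime moduli.
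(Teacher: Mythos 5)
Your proof is correct and takes essentially the same route as the paper: apply the Chinese Remainder Theorem at the level of the ring, its unit group, and the translation parameters, and check that the formation of $\P^1$ and the $\Gamma_\i$-action are componentwise. The only difference is cosmetic --- the paper simply lifts the local translation parameters $n_q$ modulo $q^{e_q}$ directly and invokes the already-established bijection of projective lines, so your refinement of the moduli to $q^{e_q}/\gcd(x_0,q^{e_q})$ is harmless but unnecessary.
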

%\vskip 10pt {\bf Proof:}
\begin{proof}
The Chinese remainder theorem states that the natural map $\Z/N\Z \to \prod_{q \in S} \Z / q^{e_q} \Z$ 
is a ring isomorphism.  It follows easily that gives  a bijection 
 $$
 \Big\{ (x_0, x_1) \in (\Z/N\Z)^2 \;\Big |\; \langle x_0, x_1 \rangle  = \Z/N\Z\Big\}
 \to \prod_{q\in S}
 \Big\{ (x_0, x_1) \in (\Z/q^{e_q}\Z)^2 \;\Big |\; \langle x_0, x_1 \rangle  = \Z/q^{e_q}\Z\Big\}.
 $$
 Furthermore, if $(x_0', x_1') = (\lambda x_0, \lambda x_1),$  then $ 
 (\pi_q(x_0'), \pi_q(x_1')) = ( \pi_q(\lambda) \pi_q( x_0),  \pi_q(\lambda) \pi_q( x_1))
 $
for each $q\in S.$  Finally,  suppose   for each $q\in S$ there exists 
$\lambda_q$ such that 
 $(\pi_q(x_0'), \pi_q(x_1')) = (\lambda_q \pi_q( x_0),  \lambda_q \pi_q( x_1)).$
 Then it follows that $(x_0', x_1') = (\lambda x_0, \lambda x_1),$ where 
 $\lambda$ is the unique solution to the system of congruences
 $\pi_q(\lambda ) = \lambda_q\; \forall q\in S.$
Consequently,  we have a well-defined bijection 
$$\P^1(\Z/N\Z) \to \prod_{q\in S} \P^1( \Z/q^{e_q}\Z).$$
In the same manner, we see that 
$\exists n \in \Z/N\Z$ such that 
$[x_0:x_1] = [y_0:y_1]\left(\begin{smallmatrix} 1&n\\0&1\esm$
if and only if, for each $q,$ 
$\exists n_q \in \Z/q^{e_q}\Z$ such that 
$[x_0:x_1] \equiv  [y_0:y_1]\left(\begin{smallmatrix} 1&n_q\\0&1\esm\pmod{q^{e_q}}.$
%\qed
\end{proof}

%\vskip 10pt \noindent 
%{\bf Corollary 6.3} {\it 
\begin{corollary}\label{c:6.3}
Suppose that, for each $q \in S,$ a set 
$\mathcal C_q$ of representatives for the double cosets 
$\Gamma_0(q^{e_q}) \bs SL(2, \Z) / \Gamma_\infty$ has been chosen.
Let $\mathcal C$ be a set having the property that, for any element
$(\gamma_q)_{q\in S}$ of the Cartesian product 
$\prod_{q\in S}\mathcal C_q$ there is a unique element $\gamma \in \mathcal C$
such that 
$$\gamma \;\equiv\; \gamma_q \hskip -3pt\pmod{q^{e_q}}, \qquad (\forall q\in S).$$
Then $\mathcal C$ is a set of representatives for the double cosets 
$\Gamma_0(N) \bs SL(2, \Z) / \Gamma_\infty.$
%}
\end{corollary}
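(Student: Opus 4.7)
The plan is to combine the identification between $\Gamma_0(M)\backslash SL(2,\Z)/\Gamma_\infty$ and the set of $\Gamma_\infty$-orbits on $\P^1(\Z/M\Z)$ from the proof of Lemma~\ref{l:4.1} with the product decomposition of Lemma~\ref{l:6.2}. The first step is to observe that the chain of natural bijections
$$\Gamma_0(M)\backslash SL(2,\Z)/\Gamma_\infty \;\longleftrightarrow\; B^1(\Z/M\Z)\backslash SL(2,\Z/M\Z)/\Gamma_\infty \;\longleftrightarrow\; \Gamma_\infty\backslash \P^1(\Z/M\Z)$$
appearing in the proof of Lemma~\ref{l:4.1} is purely formal in $M$; it therefore holds for $M=N$ as well as for each prime power $M=q^{e_q}$, with no essential change to the argument.

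Second, combining these identifications with Lemma~\ref{l:6.2}, I would deduce that the componentwise reduction map $\gamma\mapsto (\pi_q(\gamma))_{q\in S}$ induces a bijection
$$\Phi\colon\;\Gamma_0(N)\backslash SL(2,\Z)/\Gamma_\infty \;\longrightarrow\; \prod_{q\in S}\Gamma_0(q^{e_q})\backslash SL(2,\Z)/\Gamma_\infty.$$
Here the key inputs are the bijection $\P^1(\Z/N\Z)\to\prod_{q\in S}\P^1(\Z/q^{e_q}\Z)$ of Lemma~\ref{l:6.2} and the fact, also in that lemma, that two elements of $\P^1(\Z/N\Z)$ are $\Gamma_\infty$-equivalent if and only if all their prime-power reductions are.

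Third, I would use $\Phi$ to show that $\mathcal{C}$ is a complete set of double coset representatives. Given any $(\gamma_q)_{q\in S}\in\prod_q \mathcal{C}_q$, the hypothesis supplies a unique $\gamma\in\mathcal{C}$ satisfying $\gamma\equiv\gamma_q\pmod{q^{e_q}}$ for every $q$, and by construction $\Phi$ sends the class of $\gamma$ in $\Gamma_0(N)\backslash SL(2,\Z)/\Gamma_\infty$ to the tuple of classes of the $\gamma_q$. Since each $\mathcal{C}_q$ meets every double coset in $\Gamma_0(q^{e_q})\backslash SL(2,\Z)/\Gamma_\infty$ exactly once, the tuples of double coset classes exhaust the Cartesian product as $(\gamma_q)$ ranges over $\prod_q \mathcal{C}_q$; transporting through $\Phi$ shows that $\mathcal{C}$ meets every double coset in $\Gamma_0(N)\backslash SL(2,\Z)/\Gamma_\infty$ exactly once. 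The main technical point, already absorbed into Lemmas~\ref{l:4.1} and~\ref{l:6.2}, is that the $\Gamma_0(N)$\textendash $\Gamma_\infty$ double coset of $\gamma\in SL(2,\Z)$ is both determined by and determines the tuple of its reductions modulo the various $q^{e_q}$; once $\Phi$ is in place the corollary reduces to formal bookkeeping.
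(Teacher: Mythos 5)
Your argument is correct and follows exactly the route the paper intends: the paper states Corollary~\ref{c:6.3} without a written proof, treating it as an immediate consequence of Lemma~\ref{l:6.2} together with the identification of $\Gamma_0(M)\backslash SL(2,\Z)/\Gamma_\infty$ with $\Gamma_\infty$-orbits on $\P^1(\Z/M\Z)$ established in the proof of Lemma~\ref{l:4.1}, and your bijection $\Phi$ is precisely that combination made explicit. Nothing is missing.
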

\vskip 10pt \noindent 
{\bf Remark:} 
A choice of representatives for $\Gamma_0(N) \bs SL(2, \Z) / \Gamma_\infty$
is slightly more information than a choice of representatives for the 
$\Gamma_0(N)$-equivalence classes of cusps:  it includes 
also a choice of matrix $\gamma_\mathfrak a$ for each
representative  cusp $\mathfrak a.$    
\vskip 10pt\noindent 
By Corollary~\ref{c:6.3}, we may fix a set $\mathcal C$ of representatives for the 
 double cosets 
$\Gamma_0(N) \bs SL(2, \Z) / \Gamma_\infty$
such that, for each $q \in S$ and each $\gamma\in \mathcal C,$ 
the matrix $\Gamma$ is equivalent $\pmod{q^{e_q}}$ to one of the 
representatives for $\Gamma_0(q^{e_q}) \bs SL(2, \Z) / \Gamma_\infty$
fixed in 
%\S 4
Lemma~\ref{l:4.1}:
\begin{equation}\label{e:DoubleCosetReps}
\left\{
\bpm 1&0\\0&1 \ebpm , \bpm 0&-1\\1&0 \ebpm 
\right\} \; \cup \; 
\left\{
\bpm 1&0\\ c_1q^l & 1\ebpm \left|
\gathered 
1 \le l < e, \ 
	\gcd(c_1,q) = 1,
	\\
	1 \le c_1 < \min(q^l,q^{e-l})
\endgathered \right.
\right\}.
\end{equation}
Such a choice determines a maximal set of inequivalent cusps for $\Gamma_0(N)$ 
and a choice of matrix $\gamma_\fa$ for each element $\fa$
of this set. Declaring that we choose our representatives 
$\fa$ and the corresponding matrices $\gamma_\fa$ in this manner does not uniquely 
determine $\gamma_\fa,$  but it does uniquely determine the coefficients
$A( \gamma_\fa, n),$ for if $\gamma_\fa$ and $\gamma_\fa'$ are equivalent 
$\pmod{N}$ then they differ by an element of the principal congruence subgroup 
$\Gamma(N)$ on the left, and $\Gamma(N)$ is contained in the kernel 
of the character $\widetilde \chi.$ 
\vskip 10pt 
%\noindent
%{\bf Lemma 6.5} {\it 
\begin{lemma}\label{l:6.5}
Let $\fa$ be a cusp and $\gamma_\fa$ 
a matrix such that $\gamma_\fa\infty = \fa$ and, for each $q \in S,$ 
$\gamma_\fa$ is equivalent $\pmod q^{e_q}$ to one of the elements of \eqref{e:DoubleCosetReps}. 
Let $\fa_q$ denote the corresponding cusp.  
That is, $\gamma_{\fa_q} \infty = \fa_q$ with $\gamma_{\fa_q}$ from \eqref{e:DoubleCosetReps} 
and $\gamma_\fa \equiv \gamma_{\fa_q} \pmod{q^{e_q}}.$
Let $\mu_\fa$ be the cusp parameter of $\fa,$ defined using some character $\chi
\pmod{N}.$  The isomorphism $\Z/N\Z \to \prod_{q\in S} \Z/q^{e_q} \Z$ ensures 
that $\chi = \prod_{q \in S} \chi_q$ for some characters $(\chi_q)_{q\in S}$ with 
$\chi_q \pmod{q^{e_q}}$ for each $q.$   For each $q\in S,$ let 
$\mu_{\fa_q}$ denote the cusp parameter of $\fa_q$ relative to 
$\chi_q.$  
Then 
$$ m_\fa \; = \; \underset {q\in S} {\operatorname{lcm}} \; ( m_{\fa_q}),$$
$$\mu_\fa \; = \; \sum_{q \in S}
\frac{m_\fa}{m_{\fa_q}}
 \mu_{\fa_q} - \left\lfloor \sum_{q \in S} 
 \frac{m_\fa}{m_{\fa_q}}
 \mu_{\fa_q} \right\rfloor \quad ( \text{greatest integer function}).$$
%}
\end{lemma}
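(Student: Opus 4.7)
The plan is to translate both claims into prime-by-prime statements via the Chinese Remainder Theorem, and then apply the definitions of $m_\fa$ and $\mu_\fa$ at each prime separately.

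For the first claim, recall that $m_\fa$ is characterized as the least positive integer $m$ such that $\gamma_\fa \left(\begin{smallmatrix}1&m\\0&1\esm \gamma_\fa^{-1}$ belongs to $\Gamma_0(N).$ Since $N = \prod_{q\in S} q^{e_q},$ membership in $\Gamma_0(N)$ is equivalent to the lower-left entry being divisible by $q^{e_q}$ for every $q\in S.$ The hypothesis $\gamma_\fa \equiv \gamma_{\fa_q} \pmod{q^{e_q}}$ implies that
\[
\gamma_\fa \begin{pmatrix}1&m\\0&1\end{pmatrix} \gamma_\fa^{-1} \;\equiv\; \gamma_{\fa_q} \begin{pmatrix}1&m\\0&1\end{pmatrix} \gamma_{\fa_q}^{-1} \hskip -6pt \pmod{q^{e_q}},
\]
so the divisibility condition at $q$ becomes exactly the condition that the conjugate of $\left(\begin{smallmatrix}1&m\\0&1\esm$ by $\gamma_{\fa_q}$ lies in $\Gamma_0(q^{e_q}),$ which in turn is equivalent to $m_{\fa_q} \mid m.$ The smallest positive $m$ satisfying all these divisibilities simultaneously is precisely $\operatorname{lcm}_{q\in S}(m_{\fa_q}),$ proving the first formula.

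For the second claim, let $k_q := m_\fa/m_{\fa_q} \in \Z_{>0}.$ Using the same congruence and the identity $\left(\begin{smallmatrix}1&m_\fa\\0&1\esm = \left(\begin{smallmatrix}1&m_{\fa_q}\\0&1\esm^{k_q},$ I compute
\[
g_\fa \;=\; \gamma_\fa \begin{pmatrix}1&m_\fa\\0&1\end{pmatrix} \gamma_\fa^{-1} \;\equiv\; \gamma_{\fa_q} \begin{pmatrix}1&m_{\fa_q}\\0&1\end{pmatrix}^{k_q} \gamma_{\fa_q}^{-1} \;=\; g_{\fa_q}^{k_q} \hskip -6pt \pmod{q^{e_q}}.
\]
Taking lower-right entries, $d_\fa \equiv (d_{\fa_q})^{\text{entry of }g_{\fa_q}^{k_q}}$ (more precisely, the full matrix identity holds, so the lower-right entries agree mod $q^{e_q}$). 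Since $\chi_q$ only depends on its input modulo $q^{e_q},$ this yields $\widetilde{\chi_q}(g_\fa) = \widetilde{\chi_q}(g_{\fa_q})^{k_q} = e^{2\pi i k_q \mu_{\fa_q}}.$ Using the factorization $\chi = \prod_{q\in S}\chi_q$ and multiplying over $q \in S$ gives
\[
e^{2\pi i \mu_\fa} \;=\; \widetilde\chi(g_\fa) \;=\; \prod_{q\in S} \widetilde{\chi_q}(g_\fa) \;=\; \exp\!\left( 2\pi i \sum_{q\in S} \tfrac{m_\fa}{m_{\fa_q}} \mu_{\fa_q}\right).
\]
Because $\mu_\fa$ is required to lie in $[0,1),$ it must equal the fractional part of $\sum_{q\in S} \frac{m_\fa}{m_{\fa_q}}\mu_{\fa_q},$ which is the stated formula.

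The only mild subtlety is the step $g_\fa \equiv g_{\fa_q}^{k_q} \pmod{q^{e_q}};$ this requires noting that the congruence $\gamma_\fa \equiv \gamma_{\fa_q} \pmod{q^{e_q}}$ also gives $\gamma_\fa^{-1} \equiv \gamma_{\fa_q}^{-1} \pmod{q^{e_q}}$ (since both matrices have determinant $1$ and reduction modulo $q^{e_q}$ is a ring homomorphism on $M_2(\Z) \to M_2(\Z/q^{e_q}\Z)$ which commutes with inversion in $SL_2$). Once that is in hand, the rest is bookkeeping with the multiplicative characterization of $\mu_\fa$ and the factorization $\chi = \prod_q \chi_q$ afforded by $\Z/N\Z \cong \prod_q \Z/q^{e_q}\Z.$
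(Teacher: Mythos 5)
Your proof is correct and follows essentially the same route as the paper: both arguments reduce modulo each $q^{e_q}$ via the congruence $\gamma_\fa \equiv \gamma_{\fa_q} \pmod{q^{e_q}}$, observe that the lower-left entry of $\gamma_\fa\left(\begin{smallmatrix}1&m\\0&1\esm\gamma_\fa^{-1}$ vanishes mod $N$ iff $m_{\fa_q}\mid m$ for every $q$ (giving the lcm), and then compute $d_\fa \equiv d_{\fa_q}^{m_\fa/m_{\fa_q}} \pmod{q^{e_q}}$ to assemble $\widetilde\chi(g_\fa)$ as the product of the $\widetilde{\chi_q}(g_{\fa_q})^{m_\fa/m_{\fa_q}}$. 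Your extra remarks (that reduction mod $q^{e_q}$ commutes with inversion in $SL_2$, and that $\mu_\fa\in[0,1)$ forces the fractional-part formula) are just the details the paper leaves implicit.
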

%\vskip 10pt 
%{\bf Proof:}
\begin{proof}
The lower left entry of $\gamma_\fa \cdot \left(\begin{smallmatrix} 1&j \\ 0 & 1 \esm \cdot \gamma_{\fa}^{-1}$
is congruent to $0 \pmod{N}$ if and only if it is congruent to $0 \pmod{q^{e_q}}$ for 
each $q.$  This is the case if and only if $j$ is divisible by $m_{\fa_q}$ for each $q.$  
The first statement follows.  We see at once that  for each $q \in S,$
$$\gamma_\fa \cdot 
\bpm 1& m_\fa \\ 0& 1 
\ebpm 
\cdot \gamma_\fa^{-1} 
\equiv 
 \gamma_{\fa_q} \cdot 
\bpm 1& m_\fa \\ 0& 1 
\ebpm 
\cdot \gamma_{\fa_q}^{-1} 
= 
 \gamma_{\fa_q} \cdot 
\bpm 1& m_{\fa_q} \\ 0& 1 
\ebpm ^{ \frac{m_\fa}{m_{\fa_q}}}
\cdot \gamma_{\fa_q}^{-1} 
\pmod{q^{e_q}}
.$$
It follows that $d_{\fa} \equiv d_{\fa_q} ^{ \frac{m_\fa}{m_{\fa_q}}}\pmod{q^{e_q}}$
for all $q \in S$ and from this the second assertion follows immediately.
%\qed 
\end{proof}
    \vskip 10pt
The following lemma will be useful later on.    
%\vskip 10pt
%\noindent
%{\bf Lemma 6.6} {\it
\begin{lemma}\label{l:6.6}
Let $\fa$ be a cusp and let $(c , d)$ denote the bottom row of $\gamma_\fa.$  
Let $a$ be an integer prime to $N.$  Let $\fa'$ be the cusp such that $\gamma_{\fa'}$ 
represents the double coset in $\Gamma_0(N)\bs SL(2, \Z)/ \Gamma_\i$ corresponding
to the $\Gamma_\i$-orbit of $[ac :d]$ in $\P^1( \Z/N\Z).$  Then 
$m_{\fa'} = m_\fa$ and $\mu_{\fa'} - a \cdot \mu_\fa \in \Z.$ 
%}
\end{lemma}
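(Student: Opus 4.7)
The plan is to write $\gamma_\fa = \bpm a_\fa & b_\fa \\ c & d \ebpm$ and $\gamma_{\fa'} = \bpm a_{\fa'} & b_{\fa'} \\ c' & d' \ebpm.$ Using the right action of $SL(2,\Z/N\Z)$ on $\P^1(\Z/N\Z)$ recorded in the proof of Lemma~\ref{l:4.1}, the hypothesis that $[c':d']$ and $[ac:d]$ lie in the same $\Gamma_\infty$-orbit becomes the statement that there exist a unit $\lambda \in (\Z/N\Z)^\times$ and an integer $n$ with $c' \equiv \lambda ac\pmod N$ and $d' \equiv \lambda(acn+d)\pmod N.$ (The case $c=0,$ i.e.\ $\fa=\infty,$ is trivial and is set aside.) A direct calculation gives
$$g_\fa \;=\; \gamma_\fa \bpm 1 & m_\fa \\ 0 & 1 \ebpm \gamma_\fa^{-1}\;=\;\bpm 1-a_\fa c m_\fa & a_\fa^2 m_\fa \\ -c^2 m_\fa & 1+a_\fa c m_\fa \ebpm,$$
so the condition $g_\fa \in \Gamma_0(N)$ yields $m_\fa = N/\gcd(N,c^2).$ Since $\lambda a$ is a unit mod $N,$ $\gcd(N,(c')^2) = \gcd(N,c^2),$ whence $m_{\fa'} = m_\fa.$ Write $m := m_\fa.$

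Next, since $\widetilde\chi(g_\fa) = \chi(1+a_\fa c m)$ and similarly for $\fa',$ the equality $\mu_{\fa'} \equiv a\mu_\fa \pmod 1$ is equivalent to
$$\chi(1+a_{\fa'} c' m) \;=\; \chi\bigl((1+a_\fa c m)^a\bigr).$$
Because $c^2 m = N\cdot c^2/\gcd(N,c^2)$ is divisible by $N,$ we have $(a_\fa c m)^2 \equiv 0\pmod N,$ so the binomial theorem yields $(1+a_\fa c m)^a \equiv 1 + a\,a_\fa c m \pmod N.$ The problem therefore reduces to proving the congruence
$$a\, a_\fa c \;\equiv\; a_{\fa'} c' \pmod{\gcd(N,c^2)}.$$

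The main obstacle is verifying this last congruence, which I handle prime by prime. Fix $p\mid N,$ let $p^k \,\Vert\, N$ and $p^j\,\Vert\, c;$ note $p^j\,\Vert\, c'$ as well, since $c' \equiv \lambda a c \pmod N$ with $\gcd(\lambda a, p)=1.$ The target modulus at $p$ is $p^{\min(k,2j)}.$ If $2j\ge k$ then both sides are $\equiv 0 \pmod{p^k}$ and we are done. Otherwise $2j < k,$ and writing $c = p^j u$ with $u$ a unit mod $p^j,$ it suffices to show $a_\fa \equiv a_{\fa'}\lambda \pmod{p^j}.$ The determinant relations $a_\fa d - b_\fa c = 1$ and $a_{\fa'}d' - b_{\fa'}c' = 1$ give $a_\fa d \equiv 1 \equiv a_{\fa'} d' \pmod{p^j};$ the relation $d' \equiv \lambda(acn+d) \equiv \lambda d \pmod{p^j}$ (using $p^j\mid acn$) then yields $a_{\fa'} \lambda d \equiv 1 \equiv a_\fa d \pmod{p^j}.$ Since $\gcd(c,d)=1$ and $p\mid c,$ $d$ is a unit mod $p^j,$ and we may cancel it to conclude $a_{\fa'}\lambda \equiv a_\fa \pmod{p^j},$ as required. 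Combining over all primes $p\mid N$ and multiplying through gives the congruence mod $\gcd(N,c^2),$ which completes the proof.
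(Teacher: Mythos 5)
Your argument is correct in outline but takes a genuinely different route from the paper's. The paper reduces everything to the prime-power case: it notes that $\gamma_\fa$ is congruent, modulo each $q^{e_q}$, to one of the standard representatives with bottom row $(0,1)$, $(1,0)$ or $(c_1q^l,1)$ from Lemma~\ref{l:4.1}, reads off $m_{\fa_q}$ and $d_{\fa_q}=1+c_1q^{\max(l,e-l)}$ from the table, checks the congruence $(1+ac_1q^{\max(l,e-l)})\equiv(1+c_1q^{\max(l,e-l)})^a\pmod{q^{e_q}}$ directly, and then assembles the global statement from the local ones via Lemma~\ref{l:6.5}. You instead work with an arbitrary $\gamma_\fa=\bpm a_\fa&b_\fa\\c&d\ebpm$, derive the closed forms $m_\fa=N/\gcd(N,c^2)$ and $\widetilde\chi(g_\fa)=\chi(1+a_\fa c\,m_\fa)$, and reduce the whole lemma to the single congruence $a\,a_\fa c\equiv a_{\fa'}c'\pmod{\gcd(N,c^2)}$. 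This is more self-contained (it does not depend on the particular representatives of Lemma~\ref{l:4.1} or on Lemma~\ref{l:6.5}) and makes the mechanism behind the statement more transparent; the paper's version is shorter only because the bookkeeping was already done in those two lemmas.

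There is, however, one false step. In your case split at a prime $p$ with $p^k\,\Vert\,N$ and $p^j\,\Vert\,c$, you assert that if $2j\ge k$ then ``both sides are $\equiv 0\pmod{p^k}$.'' That is only true when $j\ge k$. In the range $k/2\le j<k$ --- which is not vacuous: it occurs exactly for the cusps $1/(c_1q^l)$ with $e/2\le l<e$ --- the left side $a\,a_\fa c$ has $p$-adic valuation exactly $j<k$ (note $p\nmid a_\fa$, since $a_\fa d\equiv 1\pmod p$ when $p\mid c$), so it is not divisible by $p^k$. Fortunately your other branch already contains the repair: since $c'\equiv\lambda a c\pmod{p^k}$, the target congruence modulo $p^{\min(k,2j)}$ reduces in every case to $a_\fa\equiv\lambda a_{\fa'}$ modulo $p^{\min(k,2j)-j}$, and $\min(k,2j)-j=\min(k-j,\,j)\le j$, so the congruence $a_\fa\equiv\lambda a_{\fa'}\pmod{p^j}$ that you establish from the determinant relations and $d'\equiv\lambda d\pmod{p^j}$ suffices uniformly. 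Deleting the faulty case and running that argument for all $j\ge 1$ closes the gap.
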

%\vskip 10pt {\bf Proof:} 
\begin{proof}
For each $q$ in $S,$ the pair $(c,d)$ is equivalent to $(0,1), (1,0)$ or
$(c_1 q^l, 1)$ modulo $q^{e_q}$ where $c_1, l$ are subject to the constraints in \eqref{e:DoubleCosetReps}. 
It follows at once that the bottom row of $\gamma_{\fa'}$ is equivalent 
to $(0,1),(1,0)$ of $(c_1' q^l , 1),$ respectively, where $c_1'  \equiv ac_1 \pmod{q^{e_q-l}}$ 
and $0 < c_1' < q^{\min(l, e_q-l)}.$ 
The value of $m_{\fa_q}$ is $1,$ except  when  the bottom row of $\gamma_{\fa}$ is equivalent 
to  $(c_1 q^l , 1),$ with $l < e-l,$ in which case it 
is $q^{e-2l}.$ It follows easily that $m_{\fa'_q} = m_{\fa_q}$ for each $q$ and thence that
$m_{\fa'}=m_\fa.$  Similarly, $d_{\fa_q}$ is equal to $1$ if $(c,d) \equiv (0,1)$ or  $(1,0)
\pmod{q^{e_q}},$ while if it is congruent to $(c_1 q^l, 1),$  then $d_{\fa_q} = (1+c_1q^{\max( l, e-l)}).$
Clearly, with $c_1'$ as above, we have
$$( 1+ c_1' q^{\max( l, e-l)}) \;
\equiv \; ( 1+ ac_1q^{\max( l, e-l)}) 
\; \equiv \; ( 1+ c_1' q^{\max( l, e-l)})^a \pmod{q^{e_q}}.$$
It follows that $a\mu_{\fa_q}  - \mu_{\fa'_q} \in \Z.$  
The second assertion of this lemma now follows from Lemma~\ref{l:6.5}, because 
$m_\fa/m_{\fa_q}$ and $m_{\fa'}/ m_{\fa'_q}$ are the same integer.
%\qed 
\end{proof}
    \vskip 10pt 
    It is easy to see from the proof of Lemma~\ref{l:6.6} that the mapping 
    $(a, \mathfrak a) \to \mathfrak a'$ which is considered in Lemma~\ref{l:6.6} actually defines
    an action of $(\mathbb Z/ N\Z)^\times$ on our set of cusps.   Abusing notation we regard it
    as an ``action'' of the set of all elements of $\Z$ which are prime to $N.$   We shall write 
    $a \cdot \fa$ for the cusp $\fa'$ obtained from $\fa$ in this fashion, and 
    $a^{-1} \cdot \fa$ for the unique cusp $\fa''$ such that 
    $a \cdot  \fa'' = \fa.$

%%%%%%%%%%%%%%%%%%%%%%
\section{On Stark's Question}
In this section we deduce some consequences of theorem \ref{t:3.8}
which provide a partial answer to the question of Stark posed in the introduction.  
We first give in theorem \ref{t:suffCond}
a sufficient condition for 
multiplicative relations at a cusp, and deduce that there are multiplicative relations at every cusp if $N$ is equal to $4$ times a squarefree odd number.  Next, if $N$ is equal to $8$ times a squarefree odd number, then theorem \ref{t:suffCond} will imply multiplicative relations at all cusps except for those of the form $a/b$ with $2\mid \mid b.$  In proposition \ref{p:eight} we consider such cusps in detail.   

In order to begin, it is useful to introduce an alternate notation for the Fourier coefficients.
Define 
\begin{equation}\label{e:bNotation}
B( \fa, \alpha) := 
 \begin{cases}
A(\fa, m_\fa \alpha - \mu_\fa),& \text{ if } m_\fa \alpha- \mu_\fa \in \Z,\\
0, & \text{ otherwise,}
\end{cases}
\qquad (\alpha \in \Q^\times).
\end{equation}
When the dependence of $B(\fa, \alpha)$ on the choice of $\gm_\fa \in SL(2, \Z)$
is relevant, we shall denote $B(\fa, \alpha)$
by $B(\gm_\fa, \alpha),$ 
where the coefficients $A( \fa, \cdot)$ 
are defined using $\sigma_\fa = \gm_\fa \bspm \sqrt{m_\fa}&\\& \sqrt{m_\fa}^{-1}\espm.$
Note that this defines $B( \gm, \alpha)$ for 
any $\gm \in SL(2, \Z)$ and any $\alpha \in \Q^\times,$ and permits us to consider, where necessary coefficients defined at the same cusp relative to two different elements of $SL(2, \Z).$

Next, we give a formal definition of what it means for a function $\Q^\times \to \C$ 
to be multiplicative.

\begin{definition}
Let $h: \Q^\times \to \C$ be a function. 
Then $h$ is said to be multiplicative if there 
exist functions $h_\infty: \{ \pm 1\} \to \C$
and $h_p:\Z \to \C$ for each prime $p$ 
such that $h_p(0) =1$ for all but finitely 
many primes $p,$ and 
\begin{equation}\label{e:multiplicativity}
h\left( \epsilon \prod_p p^{e_p}\right) = 
h_\infty( \epsilon) \cdot \prod_p h_p(e_p),
\end{equation}
for any $\epsilon \in \{ \pm 1\}$ and 
$e_p \in \Z$ with $e_p=0$ for all but finitely many primes $p.$
\end{definition}
\begin{remark}
The functions $h_\infty, h_p$ are not uniquely determined by $h$:  given a collection of functions satisfying \eqref{e:multiplicativity}, one may vary any finite set of them by nonzero scalars, provided the product of all the scalars is $1.$  
If $h$ is a factorizable function and $h(1)$
is $1,$ then one can normalize by requiring that $h_\infty(1)=1$ and $h_p(0)=1$ for all $p.$  Then \eqref{e:multiplicativity} becomes
\begin{equation}\label{e:simpleMult}
h\left( \epsilon \prod_p p^{e_p}\right) = 
h( \epsilon) \cdot \prod_p h\left(p^{e_p}\right).
\end{equation} 
Thus \eqref{e:multiplicativity} is a generalization of \eqref{e:simpleMult}, which may be applied to functions with vanish at $1.$ 
\end{remark}
In this section, we consider the question:
\begin{equation}\label{question}
\text{\it is the function 
$\alpha \to B( \gamma, \alpha)$ multiplicative?}\end{equation}

We remark that by lemma \ref{l:6.1}, the 
answer to the question \eqref{question}
depends only on the cusp $\fa$
 provided that $\mu_\fa =0$ and $m_\fa =1,$ but not in general.

\begin{theorem}\label{t:suffCond}
Let $\fa=\frac{a}{b}$ be a cusp, with $\gcd(a,b)=1,$ and set $M = \gcd(N,b).$
 Assume that either $\gcd(M,N/M)=1,$ 
 or else $\gcd(M,N/M)=2,$  and $2 \mid \mid \frac NM.$  
 Assume further that for each prime $q\mid N,$ the matrix 
 $\gamma_\fa$ is equivalent to some element of 
 \eqref{e:DoubleCosetReps} modulo 
 $q^e,$ where $q^e \mid\mid N.$ 
 Finally, let $f:\mathfrak h \to \C$ be any Maass form for $\Gamma_0(N)$ such that 
 $W_f(g) = \prod_v W_{f,v}(g_v)$ for all 
 $g = \{ g_v\} \in GL(2, \A).$   
 Then the Fourier coefficients $B(\gm_\fa, \alpha)$ of $f$ at $\fa$ (defined relative to $\gm_\fa$) are multiplicative.
 \end{theorem}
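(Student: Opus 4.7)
The plan is to apply Theorem~\ref{t:3.8} directly to $A(\gm_\fa, \epsilon M)$ and show that, under the hypotheses on $\gcd(M,N/M)$, the resulting expression factors completely along primes.  Writing $\alpha = \epsilon\prod_p p^{m_p}$, Theorem~\ref{t:3.8} gives $B(\gm_\fa,\alpha)$ as $A(\infty,\epsilon)$ times the Hecke factors $A(\infty, p^{m_p})$ at primes $p\nmid N$ (which are already multiplicative by Proposition~\ref{p:FourierWhittakerExp}) times local factors $L_q(\alpha)$ at each $q\mid N$.  Multiplicativity will follow once I show that each $L_q(\alpha)$ depends only on the $q$-part of $\alpha$.

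The hypothesis $\gcd(M,N/M)=1$ forces, for each prime $q\mid N$ with $q^{e_q}\mid\mid N$, either $q\nmid b$ or $q^{e_q}\mid b$.  Combined with the compatibility assumption on $\gm_\fa$, this means that modulo $q^{e_q}$ the matrix $\gm_\fa$ is either the identity (so $\fa$ is locally $\infty$ at $q$) or the Weyl element $\bpm 0 & -1 \\ 1 & 0\ebpm$ (so $\fa$ is locally $0$ at $q$); in both cases $m_{\fa_q}\in\{1,q^{e_q}\}$ and $\mu_{\fa_q}=0$.  For a prime $q_i\mid N$ the congruence defining $\fb_i$ in Theorem~\ref{t:3.8} involves the conjugate $\gm_\fa \bpm \epsilon M_i & 0 \\ 0 & 1\ebpm \gm_\fa^{-1}$.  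In the two Fricke cases this conjugate is either $\bpm \epsilon M_i & 0 \\ 0 & 1\ebpm$ itself or, using $\bpm 0&-1\\1&0\ebpm \bpm \epsilon M_i & 0\\0&1\ebpm\bpm 0&1\\-1&0\ebpm=\bpm 1&0\\0&\epsilon M_i\ebpm$, the diagonal matrix with entries $(1,\epsilon M_i)$.  Both lie in $I_{q_u,N}$ automatically for $q_u\ne q_i$ because $\epsilon M_i$ is a $q_u$-adic unit there.  Hence the $q_u$-conditions for $u\ne i$ give no information relating $\fb_i$ to $M_i$, while the condition at $q_i$ involves only $\epsilon M_i \pmod{q_i^{e_i}}$; one checks that it is solved by taking $\fb_i=\fa$ with $j_i$ depending only on $q_i^{m_i'}$.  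Consequently $L_{q_i}(\alpha)$ depends only on the $q_i$-part of $\alpha$.

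For $\gcd(M,N/M)=2$ with $2\mid\mid N/M$, the argument at odd primes is identical.  At $q=2$ we have $2^{e-1}\mid b$ but $2^e\nmid b$, where $2^e\mid\mid N$ with $e\ge 2$, and Lemma~\ref{l:4.1} forces the local cusp representative to be $\tfrac{1}{2^{e-1}}$ (the range $1\le c_1<\min(2^{e-1},2)=2$ admits only $c_1=1$), with local width $m_{\fa_2}=1$.  This rigidity at $2$ plays the same role as the Fricke condition: the cusp $\fb_i$ is forced to agree with $\fa$ locally at $2$, so once again $L_{q_i}(\alpha)$ depends only on $q_i^{m_i'}$.

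The main obstacle is the bookkeeping of the character values $\prod_{u\ne i}\chi_u(d_i)^{-1}\cdot\chi_i(\delta_i\delta_i' d_{q_i})^{-1}$ appearing in each local factor of Theorem~\ref{t:3.8}.  These are indexed by pairs of primes in a cross-term fashion, and I must show that after collecting by prime and using the factorization $\chi=\prod_{q\mid N}\chi_q$, they regroup into a product of single-prime factors indexed by $q\mid N$, each depending only on $q^{m_q'}$.  This is a Chinese-remainder-theorem bookkeeping problem closely related to Lemmas~\ref{l:6.5} and~\ref{l:6.6}, requiring explicit tracking of how $d_i$ and $d_{q_i}$ decompose $\pmod{q_u^{e_u}}$ for each $u$; the compatibility assumption on $\gm_\fa$ ensures that this decomposition is clean in each of the two Fricke cases.
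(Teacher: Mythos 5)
Your overall route matches the paper's: factor $B(\gm_\fa,\alpha)$ into local Whittaker values, use the hypothesis on $M=\gcd(N,b)$ to pin the reduction of $\gm_\fa$ modulo $q^{e_q}$ down to $\bspm 1&0\\0&1\espm$, $\bspm 0&-1\\1&0\espm$, or (for $q=2$) $\bspm 1&0\\2^{e-1}&1\espm$, and argue that each local factor then depends on $\alpha$ only through data that regroups multiplicatively. Two points need repair, however. First, your handling of the conditions at $q_u\neq q_i$ is off: what must lie in $I_{q_u,N}$ for $u\neq i$ is the matrix $\gm_{\fb_i}\bspm 1&j_i\\0&1\espm$ alone, not the conjugate $\gm_{\fb_i}\bspm 1&j_i\\0&1\espm\bspm \epsilon M_i&0\\0&1\espm\gm_\fa^{-1}$, and your justification that the conjugate lies there ``because $\epsilon M_i$ is a $q_u$-adic unit'' is false: $M_i=p_1^{m_1}\cdots p_n^{m_n}\prod_{u\neq i}q_u^{m_u'}$ has $q_u$-valuation $m_u'$, which need not vanish. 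The conclusion you draw (that the conditions away from $q_i$ carry no $\alpha$-dependence) happens to be true, but for a different reason: they force $\fb_i$ to be locally $\infty$ at every $q_u$ with $u\neq i$. In particular $\fb_i$ is not $\fa$ but the representative agreeing with $\fa$ at $q_i$ and with $\infty$ elsewhere --- harmless for multiplicativity, since it is still independent of $\alpha$, but your assertion as stated is not correct for composite $N$.

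Second, and more seriously, the step you yourself flag as ``the main obstacle'' --- showing that the character contributions regroup into a multiplicative function of $\alpha$ --- is the actual content of the theorem and is left undone. The paper closes it by tabulating, for each of the three admissible local shapes of $\gm_\fa$, the triple $(\gm_\fb, j, k_0)$ explicitly: in every case $j=0$ (for the shape $\bspm 1&0\\2^{e-1}&1\espm$ this is because $2\mid\mid N/M$ forces $l=e-1\ge e/2$, so the relevant local width is $1$; contrast Proposition~\ref{p:eight}, where $l<e/2$ and $j$ depends on the unit $\alpha|\alpha|_2$ and genuinely destroys multiplicativity), and $\widetilde\chi_{_{\rm idelic}}(k_0)$ equals either $1$ or $\chi_q(\alpha|\alpha|_q)$ times a constant. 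Since $\chi_q$ is a homomorphism and $\alpha\mapsto\alpha|\alpha|_q$ kills only the $q$-part, each such factor is visibly multiplicative, and the product over $q\mid N$ together with the factors $A(\infty,|\alpha|_p^{-1})$ at $p\nmid N$ gives the result. Your ``cross-term'' framing via the $\delta_i,\delta_i'$ bookkeeping of Theorem~\ref{t:3.8} makes this look harder than it is; once the local factor is written as $A(\fb_q,m_{\fb_q}|\alpha|_q^{-1}-\mu_{\fb_q})\,e_\infty(|\alpha|_q^{-1}j_q)\,\widetilde\chi_{_{\rm idelic}}(k_0)$ with $k_0$ computed from the three tabulated cases, the multiplicativity is immediate. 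As it stands, you have correctly set up the reduction but not proved the theorem.
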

 \begin{proof}
  For $\gm \in SL(2, \Z)$ and $\alpha \in \Q^\times,$ let  
 \begin{equation}\label{gaalpha}
 g( \gm, \alpha) = 
i_\infty\left( \bpm \text{sign}(\alpha)&\\&1 \ebpm
\right)
 i_{_{\text{finite}}}\left( \bpm \alpha &\\&1\ebpm \gamma^{-1}
\right).
 \end{equation}
 Then it follows from theorems 
 \ref{t:GlobalWhittaker} and  \ref{t:3.8}
 that 
 \begin{equation}\label{e:baalpha=prod}
 \begin{aligned}
 B(\gm, \alpha)&=
 \lim_{y_\infty \to 1}\; \frac{W_f\left( i_\infty\left(\begin{pmatrix} y_\infty & \\ & 1\end{pmatrix}\right)\cdot g( \gm, \alpha) \right)}{ W_{\frac{\text{sign}(\alpha)k}2, \nu -\frac 12} (4\pi y_\infty)}\\
  & =
  A(\infty, \text{sign}(\alpha))
  \cdot \prod_p W_{f,p}\left(\begin{pmatrix} \alpha & \\ & 1\end{pmatrix}\gamma^{-1}\right).
  \end{aligned}
 \end{equation}
  Furthermore, $$
 W_{f,p}\left(\begin{pmatrix} \alpha & \\ & 1\end{pmatrix}\gamma^{-1}\right)
 =\begin{cases}
 A( \infty , |\alpha|_p^{-1}), &\text{ if } |\alpha|_p^{-1} \in \Z, \\0&, \text{otherwise,}
 \end{cases}
 $$
 for all $p\nmid N.$  Let $q$ be a prime with $q^e \mid \mid N,\, e>0.$ 
 In order compute $W_{f,q}\left(\begin{pmatrix} \alpha & \\ & 1\end{pmatrix}\gamma_{\fa}^{-1}\right)$
  one must determine the 
  unique cusp
 $\fb( \fa, \alpha, q),$ 
 (from our fixed set of representatives for the $\Gamma_0(N)$-equivalence classes),
 integer  $0\le j( \fa, \alpha, q)\le m_\fa,$ and matrix  
 and $k_0( \fa, \alpha, q)\in K_0(N)$
 such that
 \begin{equation}\label{e:b,j,and k_0}
 i_{_{\text{finite}}} \left( 
 \gm_{\fb( \fa, \alpha, q)} \bpm 1&j( \fa, \alpha, q)\\&1 \ebpm 
 \right)
 i_q\left( \bpm \alpha |\alpha|_q&\\&1 \ebpm 
 \gm_\fa^{-1}
 \right) =k_0( \fa, \alpha, q).
 \end{equation}  We may assume that the set of representatives for the $\Gamma_0(N)$-equivalence classes, and that a 
 matrix $\gamma_\fb$ for each representative $\fb$, were chosen as in section \ref{s:RemarksOnChoices}.  Then 
 $\fb(\fa , \alpha, q)$ depends only on the reduction of 
 $\fa$ modulo $q^e.$  Furthermore, 
 this reduction has to equal $\bspm 1&0\\0&1 \espm$ or $\bspm 0&-1\\1&0 \espm,$ unless $q=2,$ in which case $\bspm 1&\\2^{e-1}&1 \espm$ is also an option.  
For such matrices the values of $\fb, j$ and $k_0$ are as below:
$$
\begin{array}{|r|r|r|r|}\hline
\gamma_\fa&\gamma_\fb & j&k_0\\
\hline
\bpm 1&\\&1 \ebpm& \bpm 1&\\&1 \ebpm&0&
\bpm \alpha|\alpha|_q &\\&1 \ebpm\\
\bpm 0&-1\\1&0 \ebpm &\bpm 0&-1\\1&0 \ebpm & 0 & \bpm1&\\& \alpha|\alpha|_q \ebpm\\
\bpm 1&\\2^{e-1}&1\ebpm &
\bpm 1&\\2^{e-1}&1\ebpm & 0 &  
\bpm 1&0\\ 2^{e-1} (\alpha^{-1} |\alpha|^{-1}_2-1)&1 \ebpm
\bpm \alpha|\alpha|_2 &\\&1 \ebpm\\ \hline
\end{array}
$$
Multiplicativity of $B(\gm_\fa, \alpha)$ follows easily.
   \end{proof}
   \begin{corollary}
   If $N$ is either squarefree, or equal to $4$ times an odd, squarefree number, then 
   $B(\fa, \alpha)$ is multiplicative at every cusp, provided the matrices $\gamma_\fa$ are chosen as in section \ref{s:RemarksOnChoices}.
   \end{corollary}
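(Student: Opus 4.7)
The plan is to deduce the corollary directly from Theorem~\ref{t:suffCond} by a simple divisibility check: I must verify that, under either hypothesis on $N$, every cusp $\fa=a/b$ (with $\gcd(a,b)=1$) satisfies one of the two alternatives on $M=\gcd(N,b)$ listed in the theorem, namely $\gcd(M,N/M)=1$, or $\gcd(M,N/M)=2$ with $2\mid\mid N/M$.

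First I would dispose of the squarefree case. If $N$ is squarefree then any divisor $M$ of $N$ is also squarefree, and any prime dividing $M$ appears to the first power in $N$, hence does not appear in $N/M$. Thus $\gcd(M,N/M)=1$ for every $M\mid N$, so in particular for $M=\gcd(N,b)$, and Theorem~\ref{t:suffCond} applies at every cusp.

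Next I would handle $N=4m$ with $m$ odd and squarefree. Write $M=2^s M_1$ with $M_1$ odd and $0\le s\le 2$; since $M\mid N$, $M_1\mid m$, and since $m$ is squarefree, $\gcd(M_1,m/M_1)=1$. I would split into three subcases according to $s$. When $s=0$ we have $N/M=4\cdot m/M_1$, so $\gcd(M,N/M)=\gcd(M_1,4\cdot m/M_1)=1$ since $M_1$ is odd and coprime to $m/M_1$. When $s=2$ we have $N/M=m/M_1$, and again $\gcd(M,N/M)=\gcd(4M_1,m/M_1)=1$ since $m/M_1$ is odd and coprime to $M_1$. In the remaining case $s=1$, we have $M=2M_1$ and $N/M=2\cdot m/M_1$; then $\gcd(M,N/M)=2\gcd(M_1,m/M_1)=2$, and $2\mid\mid N/M$ because $m/M_1$ is odd. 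So in each subcase one of the two alternatives of Theorem~\ref{t:suffCond} is satisfied.

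There is no real obstacle; the only point to be careful about is the technical hypothesis in Theorem~\ref{t:suffCond} that $\gamma_\fa$ is equivalent modulo each prime power $q^e\mid\mid N$ to an element of \eqref{e:DoubleCosetReps}. This, however, is exactly built into the recipe for choosing representatives described in section~\ref{s:RemarksOnChoices} (via Corollary~\ref{c:6.3}), so the hypothesis is automatic for the $\gamma_\fa$ produced by that construction, and the theorem may be invoked uniformly for every cusp representative. The multiplicativity of $\alpha\mapsto B(\fa,\alpha)$ then follows from the theorem applied cusp by cusp.
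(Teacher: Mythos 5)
Your proposal is correct and is exactly the deduction the paper intends: the corollary is stated without proof as an immediate consequence of Theorem~\ref{t:suffCond}, and your case analysis on $s=\ord_2(M)$ verifies the divisibility hypotheses precisely (with the squarefree case and the $\infty$, $0$ cusps being trivial). Your remark that the hypothesis on $\gamma_\fa$ is automatic for representatives chosen via Corollary~\ref{c:6.3} is also the right observation.
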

   \begin{remark}
 If we do not choose the matrices $\gamma_\fa$ as in section \ref{s:RemarksOnChoices}, then the multiplicativity may well be destroyed.  For example, suppose $N=q,$ a prime, and $\fa =0.$   Then 
 $\mu_0 = 0,$ and $m_0 = q.$  Thus $B(0, \alpha)$ is zero, unless $\alpha = n/q$ with $n \in \Z,$ in which case it is $A(0,n).$  
 By choosing $\gamma_0 = \bspm 0&-1\\1&0 \espm,$ we obtain the Fricke involution for $\sigma_0,$ and the Fourier coefficents $A(0,n)$ and $B(0, \alpha)$ are multiplicative
exactly as in \cite{Asai:1976}
 On the other hand, if we take
 $\gamma_0 = \bspm 0&-1\\1&1 \espm,$ then, according to Lemma \ref{l:6.1}, the effect is to multiply $A(0, n)$ by $e_\infty(\frac{n}q),$ destroying multiplicativity. 
    \end{remark}
   \begin{remark}
   The condition placed on $M$ in theorem \ref{t:suffCond} can be given more conceptually.  What we require is a condition on $\fa$ which will ensure that 
   $\fb$ is independent of $\alpha,$ and $j$ is always zero.  Next, we examine a case 
   when $\fb$ is independent of $\alpha,$
   but $j$ is not always zero.
   \end{remark}
   \begin{proposition}\label{p:eight}
   Suppose that $N$ is  equal to $8$ times an odd squarefree number, and that $\fa= a/b$ is a cusp such that $2\mid\mid b.$  Assume that $\gamma_\fa$ was chosen as in section \ref{s:RemarksOnChoices}.  Then 
   $B(\fa, \alpha)$ is equal to a multiplicative function times the function 
  \begin{equation}\label{e:nonMultFcnForEight}
   \alpha = 2^e \cdot(1+2j+4k) \mapsto e_\infty( 2^e \cdot j).
   \end{equation}
   \end{proposition}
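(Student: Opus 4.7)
The plan is to adapt the proof of Theorem~\ref{t:suffCond}, tracking the precise way in which the argument breaks down at $q=2$. Starting from \eqref{e:baalpha=prod}, I analyze each local factor $W_{f,p}\!\left(\bspm \alpha & 0\\0 & 1 \espm\gm_\fa^{-1}\right)$ separately. For $p\nmid N$, Corollary~\ref{c:LocalWhittaker} gives $A(\infty,|\alpha|_p^{-1})$ or zero, a function of $v_p(\alpha)$ only. For each odd prime $q\mid N$, squarefreeness of $N/8$ forces $q\mid\mid N$, so the representatives from \eqref{e:DoubleCosetReps} modulo $q$ are only $\bspm 1&0\\0&1 \espm$ and $\bspm 0&-1\\1&0 \espm$; the argument from Theorem~\ref{t:suffCond} then shows that $(\fb(\fa,\alpha,q),j(\fa,\alpha,q))=(\infty,0)$ or $(0,0)$ independently of $\alpha$, producing a multiplicative contribution in $v_q(\alpha)$.

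The heart of the proof is the computation at $q=2$. Since $2\mid\mid b$, the conventions of Section~\ref{s:RemarksOnChoices} force $\gm_\fa$ to be congruent modulo $8$ to $\bspm 1&0\\2&1 \espm$, the chosen representative for the cusp $1/2$. Writing $\alpha=2^e u$ with $u$ a $2$-adic unit, one has $|y_2|_2 y_2=u$, and the explicit calculation
\[
\bpm 1&0\\2&1 \ebpm\bpm 1&j\\0&1 \ebpm\bpm u&0\\0&1 \ebpm\bpm 1&0\\-2&1 \ebpm \;\equiv\; \bpm u-2j & j\\ 2u-4j-2 & 2j+1 \ebpm \pmod{8},
\]
performed from \eqref{e:b,j,and k_0}, shows that the lower-left entry lies in $8\Z_2$ precisely when $u\equiv 2j+1\pmod{4}$. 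Hence $\fb(\fa,\alpha,2)=1/2$ for every $\alpha$, while $j(\fa,\alpha,2)\in\{0,1\}$ is determined by $u$ modulo $4$, matching exactly the parameter $j$ in the proposition.

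Corollary~\ref{c:LocalWhittaker} then gives the $2$-adic factor
\[
A\!\left(\tfrac12,\;2^{e+1}-\mu_{1/2}\right)\cdot e_\infty(2^e j)\cdot \chi_2(2j+1),
\]
in which the first factor depends only on $v_2(\alpha)$ (and vanishes if $2^{e+1}-\mu_{1/2}\notin\Z$, which is also a function of $v_2(\alpha)$ alone). For the third factor, $(\Z/8\Z)^\times\cong\Z/2\Z\times\Z/2\Z$ forces $\chi_2$ to take values in $\{\pm 1\}$; expanding $u$ over its prime factorization (including $\text{sgn}(\alpha)$), the parity of $j$ is visibly a sum of contributions from the individual prime exponents and the sign, so $\chi_2(3)^j=\chi_2(2j+1)$ decomposes as $h_\infty(\text{sgn}(\alpha))\prod_p h_p(v_p(\alpha))$, i.e., is multiplicative. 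Combining all local contributions yields $B(\fa,\alpha)=h(\alpha)\cdot e_\infty(2^e j)$ with $h$ multiplicative. The main obstacle is the explicit $8$-adic matrix calculation together with the subsequent verification that $\chi_2(2j+1)$, although built from the non-linearly varying parameter $j$, nevertheless decomposes multiplicatively and can be absorbed into $h$.
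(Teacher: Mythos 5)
Your proposal is correct and follows essentially the same route as the paper's proof: both reduce to \eqref{e:baalpha=prod}, observe that all local factors away from $2$ are independent of $\alpha$ (so multiplicative), perform the identical conjugation computation $\bspm 1&0\\2&1\espm\bspm 1&j\\0&1\espm\bspm u&0\\0&1\espm\bspm 1&0\\-2&1\espm$ to extract the condition $u\equiv 1+2j\pmod 4$, and then note that the character factor $\chi_2(1+2j)$ is multiplicative because $u\mapsto u\bmod 4$ is a homomorphism onto $\{1,3\}\subset(\Z/8\Z)^\times$, leaving $e_\infty(2^e j)$ as the sole non-multiplicative factor. The only difference is one of exposition (you phrase the multiplicativity of $\chi_2(1+2j)$ via additivity of the parity of $j$ over the prime factorization rather than via the subgroup $\{1,3\}$), which is the same argument in different words.
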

   \begin{proof}
   As in the proof of theorem \ref{t:suffCond}, 
   we use \eqref{e:baalpha=prod}.  
   The conditions on $N$ assure that
   $B( \fa, \alpha, q)$ is independent 
   of $\alpha$ for all $q,$ and that 
   $j(\fa, \alpha, q)$ is identically zero for all $q$ except $q=2.$  When $q=2,$ we must solve   $$
 i_{_{\text{finite}}} \left( 
 \gm_{\fb} \bpm 1&j\\&1 \ebpm 
 \right)
 i_2\left( \bpm \alpha |\alpha|_2&\\&1 \ebpm 
 \gm_\fa^{-1}
 \right) \in K_0(N).
 $$
By looking at the condition on $j$ at 
each prime, we see that $\frac N8 \mid j,$ and that 
$$
\bpm 1&\\2&1 \ebpm \bpm 1&j\\&1 \ebpm 
\bpm \alpha |\alpha|_q&\\&1 \ebpm
\bpm 1&\\-2&1 \ebpm
=\bpm \alpha|\alpha|_2-2 j&j\\2 \alpha|\alpha|_2-2 (1+2 j)&1+2 j\ebpm
 \in K_0(8).
$$
 This implies that $j$ satisfies $\alpha|\alpha|_2 \equiv 1+2j \pmod{4}.$  
Since $\{ 1, 3\}$ is a subgroup of 
$\Z/8\Z^\times,$ the function
$$ \alpha \mapsto \widetilde \chi_{_{\text{idelic}}}\bpm \alpha|\alpha|_2-2 j&j\\2 \alpha|\alpha|_2-2 (1+2 j)&1+2 j\ebpm,$$
is multiplicative. 
It follows that  $B(\fa, \alpha)$ is 
a product of functions, all of which are
multiplicative except for 
$\alpha \mapsto e_\infty( j( \fa, \alpha, 2) \cdot 2^e),$  
which is given by \eqref{e:nonMultFcnForEight}.
   \end{proof}

Certainly, the function \eqref{e:nonMultFcnForEight} is not multiplicative. 
However, it is still possible for the Fourier coefficients $B(\fa, \alpha)$ to be multiplicative if sufficiently many of them are zero.
For example, suppose that $N=8$ and  $\chi$ is primitive.  Then 
$\mu_\fa =\frac 12,$ and $m_\fa = \frac 12,$ 
from which it follows that $B(\fa, \alpha) =0$
unless $\alpha= \frac{2n+1}{4}.$ 
The function 
\eqref{e:nonMultFcnForEight} 
in this case is $1$ if $2n+1$ is $1$ mod $4$
and $i$ if it is $3$ mod $4.$ 
 If there exist coprime integers $n_1, n_2$ with 
$n_1 \equiv n_2 \equiv 3 \pmod{4},$ 
and $B( \fa, \frac{n_1}4)B( \fa, \frac{n_2}4) \ne 0,$ then it follows that $B(\fa, \alpha)$ is not a multiplicative function.  
Furthermore, it 
follows from lemma \ref{l:6.1}
that $B(\fa, \alpha)$ is not multiplicative for any other choice of $\gamma_\fa$ either.  

On the other hand, if $A(\infty, n)=0$
whenever $n\equiv 3 \pmod{4},$ then the function 
 \eqref{e:nonMultFcnForEight} can be 
 omitted from the formula for $B(\frac12, \alpha)$ without affecting its value.  As a consequence, the Fourier coefficients are 
 multiplicative in this case.

%%%%%%%%%%%%%%%%%%%%%%
\section{A classical interpretation of the Jacquet-Langlands criterion for supercuspidality}\label{s:Supercuspidal}

	The following theorem provides a classical criterion  for a representation of $GL(2, \mathbb Q_p)$ to be supercuspidal.
	
	\begin{theorem}\label{t:3.9}
Fix a prime $p$, let $V_p$ be a complex vector space,  and let $\pi_p:GL(2, \mathbb Q_p)\to GL(V_p)$. Assume  $(\pi_p, V_p)$ is an irreducible and admissible representation of $GL(2, \mathbb Q_p)$.

Let $f$ be a normalized Maass-Hecke newform of weight $k$, type $\nu$, level $N$,  and  character $\chi\pmod{N}$ for $\Gamma_0(N)$.  For each cusp $\mathfrak a\in \mathbb Q\cup\{\infty\}$ and $n\in\mathbb Z$, let $A(\mathfrak a, n)$ denote the $n^{th}$ Fourier coefficient of $f$ at the cusp $\mathfrak a$.  
 Let $f_{_{\text{adelic}}}$ be the adelic lift of $f$  as in \eqref{e:AdelicLift}. If $(\pi_p, V_p)$ is isomorphic to the local representation factor of the irreducible global automorphic representation of $GL(2, \mathbb A)$ (which is generated by $f_{adelic}$), then $(\pi_p, V_p)$ is supercuspidal if and only if 
 for each cusp $\mathfrak a\in \mathbb Q\cup\{\infty\}$ with $\mu_\mathfrak a=0$, there exists an integer $M_\mathfrak a \ge 0$, such that 
	$$A(\mathfrak a, m_\mathfrak a p^m)=0, \qquad \big(\text{for all} \; m\in \mathbb Z, \,m\geq M_\mathfrak a\big),$$
where $m_\mathfrak a, \, \mu_\mathfrak a$ are given by \eqref{e:sigma_a}, \eqref{e:cuspParameter}, respectively.
\end{theorem}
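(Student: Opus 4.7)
The plan is to apply the Kirillov model criterion of Jacquet--Langlands cited just above: $(\pi_p, V_p)$ is supercuspidal if and only if its Kirillov model $\mathcal K(\pi_p, e_p)$ equals the Schwartz--Bruhat space $\mathcal S(\Q_p^\times)$.  The goal is then to translate this local analytic condition into the vanishing of Fourier coefficients via Corollary~\ref{c:LocalWhittaker}, combined with the multiplicative decomposition furnished by Theorem~\ref{t:3.8}.

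Recall that $\mathcal K(\pi_p, e_p)$ is the space of functions $\xi_W(y) := W\bspm y & 0 \\ 0 & 1 \espm$ on $\Q_p^\times$ as $W$ ranges over $\mathcal W(\pi_p, e_p)$.  Applying the transformation rule $W(ng) = e_p(n)W(g)$ with $n$ in a small compact open subgroup fixing $W$ shows $\xi_W(y) = 0$ whenever $|y|_p$ is sufficiently large, so the Schwartz-space condition reduces to the requirement that every $\xi_W$ also vanish for $\ord_p(y)$ sufficiently large.  Since $(\pi_p, V_p)$ is irreducible, $\mathcal W(\pi_p, e_p)$ is generated by $W_{f,p}$ under the right-regular action of $GL(2, \Q_p)$, and by the Iwasawa decomposition the support of any such translate, as a function of $y$, agrees (up to rescaling and multiplication by a unitary character) with that of $y \mapsto W_{f,p}\bspm y & 0 \\ 0 & 1 \espm k_p$ for some $k_p \in GL(2, \Z_p)$.

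Next, Corollary~\ref{c:LocalWhittaker} gives, for $y$ with $\ord_p(y) = m$ and any $k_p \in GL(2, \Z_p)$,
\[
W_{f,p}\bspm y & 0 \\ 0 & 1 \espm k_p \; = \; A\bigl(\fa_p,\, m_{\fa_p}p^m - \mu_{\fa_p}\bigr) \cdot (\text{nonzero factor}),
\]
interpreted as zero whenever $m_{\fa_p}p^m - \mu_{\fa_p} \notin \Z$, where $\fa_p \in S$ is canonically determined by $k_p$.  An analysis of the constraint $\gamma_{\fa_p}\bspm 1 & j_p \\ 0 & 1 \espm \in I_{q, N}$ at each prime $q \ne p$ dividing $N$ shows that $\fa_p$ is necessarily of \emph{pure $p$-type}, meaning that its representative reduces to the identity matrix modulo $q^{e_q}$ at every other prime dividing $N$.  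At such a cusp with $\mu_{\fa_p} = 0$ the Kirillov vanishing translates to the stated condition $A(\fa_p, m_{\fa_p}p^m) = 0$ for $m \ge M_{\fa_p}$; when $\mu_{\fa_p} \ne 0$, the $p$-adic denominator of $\mu_{\fa_p}$ (cf.\ Lemma~\ref{l:5.2}) forces $m_{\fa_p}p^m - \mu_{\fa_p}$ out of $\Z$ for all large $m$, making the coefficient zero automatically and explaining why the hypothesis is restricted to $\mu_\fa = 0$.

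The extension to every cusp $\fa \in \Q \cup \{\infty\}$, not merely those of pure $p$-type, is obtained by applying Theorem~\ref{t:3.8} to $A(\fa, m_\fa p^m - \mu_\fa)$: its product expansion contains a $p$-factor of the form $A(\fb_p,\, m_{\fb_p} p^{m''} - \mu_{\fb_p})$ at a pure-$p$ cusp $\fb_p$ determined by $\fa$, so vanishing at pure-$p$ cusps propagates multiplicatively to vanishing at all cusps, while conversely specializing the general hypothesis recovers the local Kirillov condition.  The main technical obstacle lies in the Corollary~\ref{c:LocalWhittaker} step: one must verify that as $k_p$ varies, the cusp $\fa_p$ exhausts every pure-$p$-type representative, and that the resulting vanishing conditions on the various $\xi_W$ together generate the full Schwartz-space condition on $\mathcal K(\pi_p, e_p)$; one must also confirm that the propagation via Theorem~\ref{t:3.8} is bidirectional, so that the stated vanishing at all cusps is genuinely equivalent to, and not merely stronger than, supercuspidality of $(\pi_p, V_p)$.
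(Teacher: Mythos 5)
Your proposal follows essentially the same route as the paper's proof: the Jacquet--Langlands criterion $\mathcal K_p = S(\Q_p^\times)$, translation of the translates $\pi_p'(g)\,.\,\varphi_p$ into Fourier coefficients at cusps via Corollary~\ref{c:LocalWhittaker} (the paper carries this out by an explicit case analysis on $g=\bspm a&b\\c&d\espm$ with $c=0$ or $c\ne 0$ rather than by an appeal to the Iwasawa decomposition, but the effect is the same), and Theorem~\ref{t:3.8} to pass between an arbitrary cusp and the local Whittaker value at $p$. The technical points you flag at the end are exactly the ones the paper settles, and your observation that $\mu_\fa\ne 0$ forces $m_\fa p^m-\mu_\fa\notin\Z$ for large $m$ is also the paper's reason the hypothesis is restricted to cusps with $\mu_\fa=0$.
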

%{\bf Proof: } 
\begin{proof}
	 If $p\nmid N$, then $f_{_{\text{adelic}}}$ is fixed by $K_p=GL(2, \mathbb Z_p)$. It follows that $(\pi_p, V_p)$ has a nonzero $K_p$-fixed vector, and a nonzero Whittaker model, which forces it to be an irreducible principal series representation (see \cite{Bump:1997}, Theorem 4.6.4). It follows that if $p \nmid N$ then $(\pi_p, V_p)$ cannot be a supercuspidal representation.  On the other hand, if $p\nmid N$ and 
	 $f$ is an eigenfunction of $T_p^\chi,$ then 
	 it follows easily that infinitely many 
	 of the coefficients $A(\infty, p^m)$ are nonzero.  This proves the equivalence in this case, and 
	 henceforth we shall assume that $p\mid N.$

Let $W_f$ denote the global Whittaker function of 
$f_{_{\text{adelic}}}$. It follows from Theorem~\ref{t:FactorizationOfWhittaker}, that there exist local Whittaker functions $W_{f, v}$ on $GL(2, \mathbb Q_v)$ at each place of $v$  such that 
	$$W_f(g) = \prod_v W_{f, v}(g_v), \qquad \big(\forall g = \{g_v\}_{v\le\infty} \in GL(2, \mathbb A)\big).$$
For a prime $p$, assume that $(\pi_p, V_p)$  is isomorphic to a component of the irreducible automorphic representation of $GL(2, \mathbb A)$ generated by $f_{_{\text{adelic}}}$. As we have noted before (see proof of Theorem~\ref{t:FactorizationOfWhittaker}) there exists a  Whittaker space $\mathcal W_p :=\mathcal W(\pi_p, e_p)$ associated to $(\pi_p, V_p)$ and	$W_{f, p}\in \mathcal W_p.$
We also have a corresponding Kirillov space, denoted $\mathcal K_p$, where 
	$$\mathcal K_p = \left\{ W\left(\begin{pmatrix} y & 0 \\ 0 & 1\end{pmatrix}\right)\;\bigg|\; y\in \mathbb Q_p^\times, \;\; W\in \mathcal W_p\right\}.$$
	(The motivation for this name 
	comes from   \cite{Kirillov:1963},  \cite{Kirillov:1966}.) 	
Define the Schwartz-Bruhat space
	$$\begin{aligned} S(\mathbb Q_p^\times) & :=  \left\{\phi:\mathbb Q_p^{\times} \to \mathbb C \; \left | \; \begin{matrix}
  \phi\, \text{is locally constant, and }
  \exists N_\phi > \epsilon_\phi> 0  \text{ such that }  \\ \text{$\phi(y) = 0$ if $|y|_p < \epsilon_\phi$ or $|y|_p > N_\phi$}\end{matrix}\right.\right\}.\end{aligned}$$
It was shown by Jacquet-Langlands 
that $(\pi_p, V_p)$ is supercuspidal if and only if $\mathcal K_p = S(\mathbb Q_p^\times)$. 
(This is proposition 2.16 of 
 \cite{Jacquet:1970}.  It also appears as theorem 3 of \cite{Godement:1970}.  See  
 \cite{GoldfeldHundley}, for a very elementary 
 treatment.)

Assume that $p\mid N$. For $y\in \mathbb Q_p^\times$, define
	$$\varphi_p(y):= W_{f, p}\left(\begin{pmatrix} y & 0 \\ 0 & 1\end{pmatrix}\right)\in \mathcal K_p.$$
	By Corollary~\ref{c:LocalWhittaker}
	$$\varphi_p(y)=\begin{cases} A(\infty, |y|_p^{-1}), & \hbox{ if } |y|_p^{-1}\in \mathbb Z,\\
	0, & \hbox{ otherwise.}\end{cases}$$
	Then $\varphi_p$ is invariant under the action of  $$I_{p, N} =  \left\{\begin{pmatrix} a & b\\ c& d\end{pmatrix} \in GL(2, \mathbb Z_p)\;\Big|\; c\in N\mathbb Z_p\right\},$$ and $\mathcal K_p$ is spanned by
	$$\Big\{\pi'(g).\varphi_p\;\Big |\; g\in GL(2, \mathbb Q_p)\Big\}.$$

Now fix $g=\left(\begin{smallmatrix} a & b\\ c & d\esm\in GL(2, \mathbb Q_p)$. We will compute $\pi'\left(g\right) \, . \, \varphi(y)$ for $y\in \mathbb Q_p^\times$ and determine under what conditions this function lies in $S\left(\mathbb Q_p^\times\right).$ There are two different cases that need to be considered.
\vskip 10pt
$\underline{\text{\bf Case (1)}\; c=0:} $ 	
	$$\begin{aligned} \pi'\left(\bpm a & b\\ 0 & d\ebpm\right).\varphi_p(y) & = W_{f, p}\left(\bpm y & 0 \\0 & 1\ebpm \bpm a & b\\ 0 & d\ebpm\right)\\
	& \\
	& =\begin{cases} \chi_p(d)\,e_p\left(bd^{-1}y\right)A\left(\infty, |ad^{-1}y|_p^{-1}\right), & \text{ if } |ad^{-1}y|_p^{-1}\in \mathbb Z,\\ 0, & \text{ otherwise.}\end{cases}\end{aligned}$$
For fixed $\left(\begin{smallmatrix} a & b\\ 0 & d\esm\in GL(2, \mathbb Q_p)$, the function $\pi'\left(\left(\begin{smallmatrix} a & b\\ 0 & d\esm\right) \, . \,\varphi_p\in S(\mathbb Q_p^\times)$ if and only if there exists\break an integer $M\ge 0$ such that $\pi'\left(\left(\begin{smallmatrix} a & b\\ 0 & d\esm\right).\varphi_p(y)=0$ if $y\in p^M \mathbb Z_p$. Since $\chi_p(d)\neq 0$ and $e_p(bd^{-1}y)\neq 0$, this function  vanishes if and only if $A\left(\infty, |ad^{-1}y|_p^{-1}\right)=0$. It follows that $
\pi'\left(\left(\begin{smallmatrix} a & b\\ 0 & d\esm\right)\, . \,\varphi_p\in S(\mathbb Q_p^\times)$ if and only if there exists a an integer $M_\infty\ge0$ such that $A(\infty, p^m)=0$ whenever  $m\geq M_\infty$. 

\vskip 10pt
$\underline{\text{\bf Case (2)}\;c\neq0:} $ 
	$$\bpm a & b\\ c& d\ebpm =\bpm c & 0 \\ 0 & c\ebpm \bpm c^{-2}(ad-bc) & ac^{-1} \\ 0 & 1\ebpm \bpm 0 & -1 \\ 1 & 0\ebpm \bpm 1 & c^{-1} d \\ 0 & 1\ebpm.$$
Consequently

	$$\begin{aligned} & \pi'_p\left(\bpm a & b\\ c& d\ebpm\right) \, . \,\varphi_p(y)\\
	& \\
	&\hskip 30pt = \pi'_p\left(\bpm c & 0 \\ 0 & c\ebpm\bpm c^{-2}(ad-bc) & ac^{-1} \\ 0 & 1\ebpm\right).\left(\pi'_p\left(\bpm 0 & -1 \\ 1 & 0 \ebpm \bpm 1 & c^{-1}d \\ 0 & 1\ebpm\right).\varphi_p\right)(y)\\
	& \\
	&\hskip 30pt =\chi_p(c)e_p\left(ac^{-1}y\right)\left(\pi'_p\left(\bpm 0 & -1 \\ 1 & 0 \ebpm \bpm 1 & c^{-1}d \\ 0 & 1\ebpm\right).\varphi_p\right)\big(c^{-2}(ad-bc)y\big).\end{aligned}$$

If $c^{-1}d\in\mathbb Z_p$ then 
	$$\left(\pi'_p\left(\bpm 0 & -1 \\ 1 & 0 \ebpm \bpm 1 & c^{-1}d \\ 0 & 1\ebpm\right).\varphi_p\right)(c^{-2}(ad-bc)y) = \left(\pi'_p\left(\bpm 0 & -1 \\ 1 & 0 \ebpm\right).\varphi_p\right)(c^{-2}(ad-bc)y)$$
	$$=W_{f, p}\left(\bpm c^{-2}(ad-bc) y & 0 \\ 0 & 1\ebpm \bpm 0 & -1 \\ 1& 0 \ebpm \right).$$
Then by Corollary~\ref{c:LocalWhittaker}, there exists a cusp $\mathfrak a\in \mathbb Q\cup\{\infty\}$, an integer $0\leq j<m_\mathfrak a$, and $k_{0}\in K_0(N)$, which are uniquely determined by $g=\left(\begin{smallmatrix} a & b\\ c& d\esm$ and $y$ such that 
	$$i_{_{\text{\rm finite}}}\left(\gamma_\mathfrak a\bpm 1 &  j \\ 0 & 1\ebpm\right)i_p\left(\bpm |c^{-2}(ad-bc)y|_pc^{-2}(ad-bc)y & 0 \\ 0 & 1\ebpm\bpm 0 & -1 \\ 1 & 0 \ebpm\right)=k_0\in K_0(N).$$
	Then
	$$\begin{aligned} & W_{f, p}\left(\bpm c^{-2}(ad-bc) y & 0 \\ 0 & 1\ebpm \bpm 0 & -1 \\ 1& 0 \ebpm \right)\\
	& \\
	& \hskip 50pt = \begin{cases} A\Big(\mathfrak a, m_\mathfrak a|c^{-2}(ad-bc)y|_p^{-1}-\mu_\mathfrak a\Big)\, e_\infty\Big(|c^{-2}(ad-bc)y|_p^{-1}j\Big)\,\widetilde\chi_{_{\text{\rm idelic}}}(k_0), \\
	\;\;\;\;\;\text{ if } m_\mathfrak a|c^{-2}(ad-bc)y|_p^{-1}-\mu_\mathfrak a\in\mathbb Z,\\
	0, \;\;\;\;\;\;\;\;\;\;\text{ otherwise.}\end{cases}\end{aligned}$$
Therefore, if $c^{-1}d\in\mathbb Z_p$ then
	$$\begin{aligned} & \pi'_p\left(\bpm a & b\\ c& d\ebpm\right).\varphi_p(y)\\
	& \\
	& \hskip 16pt=\begin{cases}\chi_p(c)e_p(ac^{-1}y)\, A\Big(\mathfrak a, m_\mathfrak a|c^{-2}(ad-bc)y|_p^{-1}-\mu_\mathfrak a\Big)e_\infty\Big(|c^{-2}(ad-bc)y|_p^{-1}j\Big)\widetilde\chi_{_{\text{\rm idelic}}}(k_0),\\
	\hskip 36pt\text{ if } m_\mathfrak a|c^{-2}(ad-bc)y|_p^{-1}-\mu_\mathfrak a\in\mathbb Z,\\
	0, \;\;\;\;\;\;\;\;\;\;\text{ otherwise.}\end{cases}\end{aligned}$$

If $c^{-1}d\notin \mathbb Z_p$, 
	$$\bpm 0 & -1 \\ 1  & 0 \ebpm \bpm 1 & c^{-1}d \\ 0 & 1\ebpm = \bpm c^{-1}d & 0 \\ 0 & c^{-1}d\ebpm \bpm c^2 d^{-2} & -cd^{-1}\\ 0 & 1\ebpm \bpm 1 & 0 \\ cd^{-1} & 1\ebpm$$
	and $\left(\begin{smallmatrix} 1 & 0 \\ cd^{-1} & 1\esm\in GL(2, \mathbb Z_p)$ since $cd^{-1}\in p\mathbb Z_p$. It follows that
	$$\begin{aligned} & \left(\pi'_p\left(\bpm 0 & -1 \\ 1 &0\ebpm \bpm 1 & c^{-1}d \\ 0 & 1\ebpm \right).\varphi_p\right)(c^{-2}(ad-bc)y)\\
	& \hskip 30pt = \chi_p\left(c^{-1}d\right)e_p\Big(-c^{-1}d^{-1}(ad-bc)y\Big)\left(\pi'_p\left(\bpm 1 & 0 \\ cd^{-1} & 1\ebpm\right).\varphi_p\right)\big(d^{-2}(ad-bc)y\big)\end{aligned}$$
	and
	$$\left(\pi'_p\left(\bpm 1 & 0 \\ cd^{-1} & 1\ebpm\right).\varphi_p\right)\big(d^{-2}(ad-bc)y\big)=W_{f, p}\left(\bpm d^{-2}(ad-bc)y & 0 \\ 0 & 1\ebpm \bpm 1 & 0 \\ cd^{-1} & 1\ebpm \right).$$
	There exists a cusp $\mathfrak a\in \mathbb Q\cup\{\infty\}$, an integer $0\leq j<m_\mathfrak a$ and $k_0\in K_0(N)$, which are uniquely determined by $g=\left(\begin{smallmatrix} a & b\\ c& d\esm$ and $y$ such that
	$$i_{_{\text{\rm finite}}}\left(\gamma_\mathfrak a \bpm 1 & j \\ 0 & 1\ebpm\right)i_p\left(\bpm |d^{-2}(ad-bc)y|_pd^{-2}(ad-bc)y & 0 \\ 0 & 1\ebpm \bpm 1 & 0 \\ cd^{-1} & 1\ebpm \right)=k_0\in K_0(N).$$
	It follows that
	$$\begin{aligned} & W_{f, p}\left(\bpm d^{-2}(ad-bc)y & 0 \\ 0 & 1\ebpm \bpm 1 & 0 \\ cd^{-1} & 1\ebpm \right)\\
	& \\
	&\hskip 50pt =\begin{cases} A\Big(\mathfrak a, m_\mathfrak a|d^{-2}(ad-bc)y|_p^{-1}-\mu_\mathfrak a\Big)e_\infty\Big(|d^{-2}(ad-bc)y|_p^{-1}j\Big)\widetilde\chi_{_{\text{\rm idelic}}}(k_0),\\
	\;\;\;\;\;\text{ if } m_\mathfrak a|d^{-2}(ad-bc)y|_p^{-1}-\mu_\mathfrak a\in\mathbb Z,\\
	0, \;\;\;\;\;\;\;\;\;\; \text{ otherwise.}\end{cases}\end{aligned}$$
	
Therefore, if $c^{-1}d\notin\mathbb Z_p$ then
	$$\begin{aligned} & \pi_p'\left(\bpm a & b\\ c& d\ebpm\right)\, . \,\varphi_p(y)\\
	& \\
	&\hskip 10pt =\begin{cases}\chi_p(c)e_p\left(ac^{-1}y\right)A\Big(\mathfrak a, m_\mathfrak a|d^{-2}(ad-bc)y|_p^{-1}-\mu_\mathfrak a\Big)e_\infty\Big(|d^{-2}(ad-bc)y|_p^{-1}j\Big)\widetilde\chi_{_{\text{\rm idelic}}}(k_0),\\
	\;\;\;\;\;\text{ if } m_\mathfrak a|d^{-2}(ad-bc)y|_p^{-1}-\mu_\mathfrak a\in\mathbb Z,\\
	0, \;\;\;\;\;\;\;\;\;\; \text{ otherwise.}\end{cases}\end{aligned}$$
\vskip 5pt

For fixed $\left(\begin{smallmatrix} a & b\\ c& d\esm\in GL(2, \mathbb Q_p)$ with $c\neq 0$, the function $\pi'_p\left(\left(\begin{smallmatrix} a & b\\ c & d\esm\right).\varphi_p\in S(\mathbb Q_p^\times)$ if and only if there exists an integer $M\ge0$ such that $\pi'_p\left(\left(\begin{smallmatrix} a & b\\ c & d\esm\right).\varphi_p(y)=0$ for $y\in p^M \mathbb Z_p$. If $\mu_\mathfrak a\neq 0$ then $A\left(\mathfrak a, m_\mathfrak a|y|_p^{-1}-\mu_\mathfrak a\right)\in S(\mathbb Q_p^\times)$ already. Therefore, for each cusp $\mathfrak a\in \mathbb Q\cup\{\infty\}$ with $\mu_\mathfrak a=0$, if there exists a non-negative integer $M_\mathfrak a$  such that $A(\mathfrak a, m_\mathfrak ap^m)=0$ for any integer $m\geq M_\mathfrak a$, then by the above computations, $\pi'_p\left(\left(\begin{smallmatrix} a & b\\ c& d\esm\right).\varphi_p\in S(\mathbb Q_p^\times)$ for any $\left(\begin{smallmatrix} a & b\\ c & d\esm\in GL(2, \mathbb Q_p)$. 
\vskip 5pt

Now assume that there exists a cusp $\mathfrak a\in \mathbb Q\cup\{\infty\}$ with $\mu_\mathfrak a=0$ such that for any non-negative integer $M$, there exists an integer $m\geq M$ and $A(\mathfrak a, m_\mathfrak ap^m)\neq 0$. By Theorem~\ref{t:3.8},
	$$A(\mathfrak a, m_\mathfrak ap^m) =\prod_{q\mid N} W_{f, q}\left(\bpm p^m & 0 \\ 0 & 1\ebpm \gamma_\mathfrak a^{-1}\right)\neq 0.$$	
	So for any non-negative integer $M$ there exists an integer $m\geq M$ such that  $W_{f, p}\left(\left(\begin{smallmatrix} p^m & 0 \\0 & 1\esm \gamma_\mathfrak a^{-1}\right)=\left(\pi'_p\left(\gamma_\mathfrak a^{-1}\right)\, . \,\varphi_p\right)(p^m)\neq 0$. 
Therefore
	$$\pi'_p\left(\gamma_\mathfrak a^{-1}\right) \, . \,\varphi_p(y)\;\notin \; S(\mathbb Q_p^\times).$$
Then $(\pi_p, V_p)$ is not supercuspidal. 
\end{proof}
% \qed

\vskip10pt\noindent
%{\bf Corollary 3.10}  {\it  
\begin{corollary}\label{c:3.10}
Let $f$ be a normalized Maass-Hecke newform of weight $k$, type $\nu$, level $N$,  and  character $\chi\pmod{N}$ where $\chi$ is a primitive Dirichlet character.  Let $f_{_{\text{adelic}}}$ be the adelic lift of $f$  as in \eqref{e:AdelicLift}.

For a prime $p$, let $(\pi_p, V_p)$ be an irreducible and admissible representation of $GL(2, \mathbb Q_p)$.  If $(\pi_p, V_p)$ is isomorphic to the local component of the irreducible global automorphic representation of $GL(2, \mathbb A)$ (which is generated by $f_{_{\text{adelic}}}$), then $(\pi_p, V_p)$ cannot be supercuspidal. 
%}
\end{corollary}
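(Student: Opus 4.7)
The plan is to apply Theorem~\ref{t:3.9} at the single cusp $\mathfrak{a}=\infty$, to reduce supercuspidality of $\pi_p$ to the vanishing of the Hecke eigenvalue $a_p:=A(\infty,p)$ via the classical Hecke recursion, and then to contradict that vanishing using primitivity of $\chi$. The case $p\nmid N$ is immediate: the adelic lift $f_{_{\text{adelic}}}$ is right-invariant under $K_p=GL(2,\mathbb{Z}_p)$, so $\pi_p$ contains a nonzero $K_p$-fixed vector, is an unramified principal series, and is therefore never supercuspidal (as already noted in the first paragraph of the proof of Theorem~\ref{t:3.9}). From now on I assume $p\mid N$ and write $p^e$ for the exact power of $p$ dividing $N$; by primitivity, the local component $\chi_p$ has full conductor $p^e$.

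Suppose, for contradiction, that $\pi_p$ is supercuspidal. Since $\mu_\infty=0$ and $m_\infty=1$, Theorem~\ref{t:3.9} applied at $\mathfrak{a}=\infty$ will yield an integer $M_\infty\ge 0$ such that $A(\infty,p^m)=0$ for all $m\ge M_\infty$. On the other hand, because $p\mid N$ forces $\chi(p)=0$, only the summand $(a,d)=(1,p)$ survives in the definition of $T_p^\chi$; unwinding the eigen-equation $T_p^\chi f=\lambda_p f$ through the Fourier expansion at $\infty$ will yield the recursion $A(\infty,p^{m+1})=A(\infty,p)\,A(\infty,p^m)$, so that $A(\infty,p^m)=a_p^{\,m}$ with $a_p:=A(\infty,p)$; combined with the vanishing just obtained, this forces $a_p=0$.

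The concluding step is to show that $a_p=0$ contradicts primitivity of $\chi$. The cleanest route is classical Atkin-Lehner theory (\cite{AtkinLehner:1970}; the proof for holomorphic forms carries over to the Maass setting using the slash operator \eqref{e:slashOperator}): when $\chi_p$ has maximal conductor $p^e$, the Fricke involution $W_{p^e}$ sends $f$ to a unit-modulus scalar multiple of the newform obtained by complex-conjugating the nebentypus, yielding an identity of the shape $|a_p|^2=(\text{a nonzero expression in the Gauss sum of }\chi_p)$, so in particular $a_p\ne 0$. The main obstacle is carrying out this Atkin-Lehner computation cleanly in the Maass setting; a shorter alternative, indicated in the remark following this corollary, is to invoke the representation-theoretic fact (implicit in Casselman~\cite{Casselman:1973}) that the conductor of any supercuspidal representation of $GL(2,\mathbb{Q}_p)$ strictly exceeds the conductor of its central character, which in our setting directly contradicts the equality $a(\pi_p)=e=a(\chi_p)$ forced by primitivity.
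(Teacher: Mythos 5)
Your proof is correct and follows essentially the same route as the paper: apply Theorem~\ref{t:3.9} at the cusp $\infty$, use the Hecke relation $A(\infty,p^m)=A(\infty,p)^m$ for a normalized newform, and invoke the classical nonvanishing $A(\infty,p)\neq 0$ for $p\mid N$ with $\chi$ primitive (the paper simply cites \cite{Iwaniec:1997} for this last fact, where you derive it from Atkin--Lehner theory). The only cosmetic differences are that you argue by contradiction rather than directly, and that you spell out the (trivial) case $p\nmid N$ which the paper absorbs into the proof of Theorem~\ref{t:3.9}.
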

%\vskip 10pt\noindent
%{\bf Proof: }  
\begin{proof}
If $p\mid N$ and $\chi$ is primitive, we know that $A(\infty, p)\neq 0$ by \cite{Iwaniec:1997}. Since $f$ is a Maass-Hecke newform, for any positive integer $m$, $A(\infty, p^m) = A(\infty, p)^m\neq 0.$ By Theorem~\ref{t:3.9}, $(\pi_p, V_p)$ cannot be supercuspidal. 
\end{proof}
%\qed
\vskip 10pt
\noindent{\bf Remark:}
In the proof of Theorem~\ref{t:3.9} we say that  if $(\pi_p, V_p)$ is supercuspidal then $p | N.$
In fact, more it true:  it follows directly from \cite{Casselman:1973}
that if $(\pi_p, V_p)$ is supercuspidal then $p^2 | N.$
For the convenience of the reader we briefly show how to deduce this fact from \cite{Casselman:1973}. We shall assume the reader is familiar with the notation of \cite{Casselman:1973} for the remainder of this paragraph.  Suppose that the field $k$ considered in \cite{Casselman:1973} is $\mathbb  Q_p$ and the representation $\varrho$ considered in Theorem 1 of \cite{Casselman:1973} comes from a Maass form of level $N$, such that $p^\alpha | N$ and $p^{\alpha+1} \nmid N.$    Then Casselman's conductor $c(\varrho)$ is the ideal $p^{\alpha}\cdot \mathbb Z_p.$ It is also shown in \cite{Casselman:1973} (page 304, line 16) that in the supercuspidal case  $c(\varrho) = p^{-n_1}$
where $n_1$
is a certain integral invariant of the representation $\varrho$ which was shown by 
Jacquet-Langlands 
to be at most $-2.$ 
(See \cite{Casselman:1973}, p. 303, paragraphs 1 and 2.)
Thus $\alpha = -n_1 \ge 2.$

%%%%%%%%%%%%%%%%%%%%%%
%%%%%%%%%%%%%%%%%%%%%%
\section{Toward a classical proof of Theorem~\ref{t:3.8}}\label{s:TowardsClassicalProof}
In this section we study the problem of giving a proof of Theorem~\ref{t:3.8} in purely classical 
terms, with a partial result in this direction being given in Theorem
% 7.5 
~\ref{t:ThreeTerm}
below.  It is helpful to 
first review the classical proof of multiplicativity of Fourier coefficients at infinity.
Suppose that 
\begin{equation}\label{e:HeckeEigen}
\lambda \cdot f(z) = \left(T^\chi_p f\right)(z):=\frac{1}{\sqrt{p}}\left(\chi(p)f(pz) + 
\sum_{b=0}^{p-1} f\left(\frac{z+i}{p}\right)
\right),
\end{equation}
for some $\lambda \in \mathbb C$ and  all $z \in \mathfrak h.$  Suppose further that
\begin{equation}\label{e:FourExp}
f( z) = \sum_{n \ne 0}
A(\infty,n) \, W_{\frac{\text{\rm sgn}(n) k}{2}, \; \nu-\frac12} \Big( 4\pi |n|\cdot y  \Big)e^{2\pi inx}.
\end{equation}
Now plug \eqref{e:FourExp} into \eqref{e:HeckeEigen}, using the identity 
$$\sum_{b = 0}^{p-1} e^\frac{2 \pi i nb}{p} = \begin{cases} p & \text{if } p|n, \\ 0& \text{if } p\hskip -2pt\not| \, n.\end{cases}$$
  By comparing coefficients of 
  $W_{\frac{\text{\rm sgn}(n) k}{2}, \; \nu-\frac12} \Big( 4\pi |n|\cdot y  \Big)e^{2\pi inx}$
  in \eqref{e:HeckeEigen}, we see that 
  \begin{equation}\label{e:ThreeTerm}
  \frac{\chi(p)}{\sqrt{p}}A\left(\infty, \frac np\right )
  -\lambda A(\infty, n)+
   \sqrt{p} A(\infty, np) \; = \; 0,
   \end{equation}
with the understanding that $A\left(\infty, \frac np\right)=0$ if $p\hskip -3pt \not|n.$
It follows that for all $n$ with $\text{gcd}(n,p) = 1,$ and all $k \ge 0,$ 
 the coefficient $A(\infty, np^k) = b_k \cdot A(\infty, n),$ where $(b_k)_{k=-1}^\infty$
 is the unique sequence satisfying the recurrence relation 
 $$ \sqrt{p} b_k - \lambda b_{k-1} + \frac{\chi(p)}{\sqrt{p}} b_{k-2} = 0,$$
 and the initial conditions $b_{-1}=0, \; b_0 =1.$  

It is natural to ask whether one may prove that the Fourier coefficients at cusps 
other than $\infty$ satisfy a recurrence relation analogous to \eqref{e:ThreeTerm}.  The answer, in 
general, appears to be ``no.''  In fact, what we shall prove in Theorem~\ref{t:ThreeTerm} below is a 
formula which, in general, involves Fourier coefficients at {\it three different cusps.}

\begin{theorem}\label{t:ThreeTerm}
Let $f$ be a Maass form of weight $k$ level $N$ and character $\chi.$  
Assume that a maximal set of $\Gamma_0(N)$-inequivalent 
cusps and a matrix $\gamma_\fa$ for each cusp $\fa$ in this set have 
been chosen as in 
%\S 6
section~\ref{s:RemarksOnChoices}.  Let $A(\fa, n)$ denote the Fourier 
coefficients defined using the matrices $\sigma_\fa$ determined
by these choices of $\fa, \gamma_\fa.$ 
Let $p$ be a prime which does not divide $N,$ and assume that 
$T_p^\chi f = \lambda f.$  
Then for each representative cusp $\fa$ there exist 
cusps 
$\fa'$ and $\fa'',$ and integers 
$j', j'', \lambda', \lambda'',$ such that 
$$\begin{aligned}  &
\frac{\chi(\lambda'')}{\sqrt{p}}A\Big(\fa'', \frac{n+\mu_\fa}{p} - \mu_{\fa''}\Big)e(j'' ( n + \mu_\fa) ) \;
   - \; \lambda A(\fa ,  n)\\
   &\hskip 100pt + \;
\sqrt{p}   \chi(\lambda') e\left( j' \frac{n+\mu_\fa} p\right)A( \fa', \lfloor 
 p \cdot ( n + \mu_\fa)\rfloor ) \; = \; 0,\end{aligned}
$$
with the caveat that $\frac{n+\mu_\fa}{p} - \mu_{\fa''}$ need not be integral, and 
the convention that if it is not then $A\left(\fa'', \;\frac{n+\mu_\fa}{p} - \mu_{\fa''}\right)$
is defined to be zero.
The cusps $\fa',$ and $\fa''$ are $p\cdot \fa$ and $p^{-1}\cdot \fa$, 
respectively, in terms of  the action of $\Z/N\Z$ on a set of 
cusp representatives satisfying our hypotheses which was defined at the 
end of the last section.  

For prime power $q^e,$ the values of $\lambda', \lambda'',$ are given  in the 
table below. 
The integers $j', j'',$ are both zero unless $\fa = \frac 1{c_1q^l}$ with 
$l < \frac e2.$ 
In that case, we have
$$\begin{aligned} 
0 \le j', j'' < q^{e-l}, \quad&\quad 0 \le c_1', c_1''< q^{l},\\
c_1''p \equiv c_1 \pmod{q^{l}},\quad&\quad c_1' \equiv c_1p \pmod{q^{l}},\\
 c_1''( p - j''q^l c_1) \equiv c_1 \pmod q^e,\quad&\quad (1-q^l c_1 j' p) c_1' \equiv c_1p \pmod{q^e}.
 \end{aligned}
 $$
 For $N = \prod_{q\in S} q^{e_q},$ the integer  $\lambda'$ may be taken to be the smallest
 positive solution to the system of congruences: $\lambda' \equiv \lambda'_q \pmod{q^{e_q}}, \; (\forall \; q \in S),$
 where, for each $q,$ the integer $\lambda'_q$ is obtained by applying the result to the prime power $q^{e_q}$ 
 and the cusp $\fa_q$ for $\Gamma_0(q^{e_q})$ such that $\gamma_\fa \equiv \gamma_{\fa_q} 
 \pmod{q^{e_q}}.$  The values of $j',j'', \lambda''$ are obtained similarly.
\end{theorem}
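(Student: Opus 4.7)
The plan is to imitate the classical derivation of \eqref{e:ThreeTerm}, replacing the Fourier expansion of $f$ at $\infty$ with that of $f|_k\sigma_\fa.$ Applying the slash operator $|_k\sigma_\fa$ to both sides of the Hecke eigenvalue equation $\lambda f = T_p^\chi f$ produces
\begin{equation*}
\lambda\, f|_k\sigma_\fa \;=\; \frac{1}{\sqrt p}\left(\chi(p)\, f|_k\bpm p&0\\0&1\ebpm\sigma_\fa \;+\; \sum_{b=0}^{p-1} f|_k\bpm 1&b\\0&p\ebpm\sigma_\fa\right).
\end{equation*}
Each of the $p+1$ terms on the right is then to be re-expressed as a Fourier series at some cusp $\fb$ from our fixed set of representatives, and the coefficient of $e^{2\pi i (n+\mu_\fa)x}$ is to be extracted on both sides.

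The key technical step is to choose, for each Hecke matrix $M,$ an element $\gamma \in \Gamma_0(N)$ sending the representative cusp $\fb$ to $M\fa.$ By Lemma \ref{l:6.6} this forces $\fb = p\cdot\fa$ when $M = \bspm p&0\\0&1\espm$ and $\fb = p^{-1}\cdot\fa$ for all the matrices $\bspm 1&b\\0&p\espm$ (these latter $p$ matrices fall into the same $\Gamma_0(N)$-class since they differ on the right by elements of $\Gamma_\infty$). The condition on $\gamma$ is equivalent to $\sigma_\fb^{-1}\gamma^{-1}M\sigma_\fa$ being upper triangular with determinant $p,$ giving a factorization $M\sigma_\fa = \gamma\,\sigma_\fb\, T'$ with $T' = \bspm t_1 & m_\fb j \\ 0 & t_2\espm,$ $t_1 t_2 = p,$ $0 \le j < p.$ For prime power level $N = q^e$ with $q\ne p,$ the explicit decomposition is carried out by direct matrix computation in each of the cases of Lemma \ref{l:4.1}: writing $\gamma_\fa,\gamma_\fb$ in normal form and examining $\gamma_\fb^{-1}M\gamma_\fa,$ a Bezout-type argument determines $\gamma$ and $T'$ and yields the congruences on $j',j''$ and on the bottom-right entries $\lambda',\lambda''$ of $\gamma$ stated in the theorem. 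The multi-prime case then follows via the Chinese remainder theorem in the form of Corollary \ref{c:6.3} and Lemma \ref{l:6.5}, which reduce a congruence modulo $N$ to a simultaneous system modulo the prime-power divisors.

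Once the decomposition is in hand, automorphy $f|_k\gamma = \widetilde\chi(\gamma)\,f$ produces the character factor $\chi(\lambda')$ or $\chi(\lambda''),$ and substituting the Fourier expansion of $f|_k\sigma_\fb$ into $(f|_k\sigma_\fb)|_k T'$ gives, in the coefficient of $e^{2\pi i(n+\mu_\fa)x},$ a contribution proportional to $A(\fb,m)$ with $m+\mu_\fb = (n+\mu_\fa)(t_2/t_1),$ together with the exponential phase $e(j(n+\mu_\fa)/t_1)$ coming from the top-right entry $m_\fb j$ of $T'.$ The summation $\sum_{b=0}^{p-1}$ over the $d=p$ matrices yields a factor $p$ when a divisibility condition holds and zero otherwise, by orthogonality of characters of $\mathbb Z/p\mathbb Z$; this factor cancels the $1/\sqrt p$ to produce the $\sqrt p$ in the statement. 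The identities $m_{p\cdot\fa}=m_\fa$ and $\mu_{p\cdot\fa}\equiv p\mu_\fa\pmod{\mathbb Z}$ from Lemma \ref{l:6.6} reconcile the Fourier indices $\lfloor p(n+\mu_\fa)\rfloor$ and $(n+\mu_\fa)/p-\mu_{\fa''}$ with the natural indices $m_{\fa'}\alpha - \mu_{\fa'}$ appearing in Proposition \ref{p:FourierWhittakerExp}, giving the stated three-term identity. The principal obstacle is the case-by-case bookkeeping: the normal form of $\gamma_\fa$ in Lemma \ref{l:4.1} changes between $l\le e/2$ (where $m_\fa=q^{e-2l}$, so the offset $j$ is genuinely present) and $l>e/2$ (where $m_\fa=1$, so $j'=j''=0$), and tracking both the integrality constraints that determine $c_1',c_1''$ and the auxiliary Bezout parameters that determine $j',j'',\lambda',\lambda''$ requires careful attention.
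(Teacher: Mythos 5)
Your overall strategy --- slashing the eigenvalue equation with $\sigma_\fa$, factoring each $M\sigma_\fa$ as $\gamma\,\sigma_\fb\,T'$ with $\gamma\in\Gamma_0(N)$ and $T'$ upper triangular, and comparing Fourier coefficients --- is exactly the paper's (Proposition~\ref{p:7.5} is this factorization packaged in the group algebra modulo the ideal $\mathcal I$). But your identification of the cusps $\fb$ is wrong, and the error sits at the one point where the whole computation turns. You claim the $p$ matrices $\bspm 1&b\\0&p\espm$ all carry $\fa$ into the class of $p^{-1}\cdot\fa$ ``since they differ on the right by elements of $\Gamma_\infty$.'' They do differ on the right by $\bspm 1&b-b'\\0&1\espm$, but that element sits \emph{between} $\bspm 1&b'\\0&p\espm$ and $\sigma_\fa$; conjugating it past $\bspm 1&b'\\0&p\espm$ produces $\bspm 1&(b-b')/p\\0&1\espm$, which is not integral, so the cusps $\bspm 1&b\\0&p\espm\fa$ for distinct $b$ need not be $\Gamma_0(N)$-equivalent. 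Writing $\fa=a/c$ with $(c,d)$ the bottom row of $\gamma_\fa$ (so $p\nmid c$ unless $\fa=\infty$), one checks that $\bspm 1&b\\0&p\espm\fa=\frac{a+bc}{pc}$ lies in the class $[pc:d]=p\cdot\fa$ for the $p-1$ values of $b$ with $p\nmid(a+bc)$, and in the class $[c:pd]=p^{-1}\cdot\fa$ only for the single $b$ with $p\mid(a+bc)$; meanwhile $\bspm p&0\\0&1\espm\fa$ also lies in $p\cdot\fa$. (Concretely, for $N=81$, $\fa=\frac19$, $p=2$: $\bspm 1&0\\0&2\espm\frac19=\frac1{18}=2\cdot\fa$, but $\bspm 1&1\\0&2\espm\frac19=\frac59\sim\frac1{45}=2^{-1}\cdot\fa$.) So your assignment is essentially reversed, and the $p$ lower-row-$(0,p)$ terms split between two classes rather than landing in one.

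This regrouping is precisely the content of the paper's Lemma~\ref{l:7.6} (the bijection $\xi\mapsto\xi'$ of coset representatives, which is \emph{not} the identity), and the shape of the final identity depends on it: the one term landing at $\fa''=p^{-1}\cdot\fa$ comes with a right factor of type $\bspm p&*\\0&1\espm$, which multiplies frequencies by $p$ and produces $\frac{\chi(\lambda'')}{\sqrt p}A\bigl(\fa'',\frac{n+\mu_\fa}{p}-\mu_{\fa''}\bigr)$, while the $p$ terms landing at $\fa'=p\cdot\fa$ all carry right factors of type $\bspm 1&*\\0&p\espm$ and, after the orthogonality sum over the $p$ admissible translates, yield $\sqrt p\,\chi(\lambda')e(\cdots)A(\fa',\lfloor p(n+\mu_\fa)\rfloor)$. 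With your assignment the single $\bspm p&0\\0&1\espm$ term could only ever contribute with a $\frac1{\sqrt p}$, never a $\sqrt p$, and for $p-1$ of the matrices $\bspm 1&b\\0&p\espm$ the factorization through $\sigma_{p^{-1}\cdot\fa}$ that you propose simply does not exist. The remainder of your outline (Lemma~\ref{l:6.6} for $m_{p\cdot\fa}=m_\fa$ and $\mu_{p\cdot\fa}\equiv p\mu_\fa \pmod{\Z}$, the CRT reduction for composite $N$, the case analysis from Lemma~\ref{l:4.1}) is sound and matches the paper, but the proof cannot close until this cusp bookkeeping is corrected.
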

$$
\begin{matrix}
\underline{\phantom{xxx}\fa\phantom{xxx}} &\m\m &\underline{\phantom{xxx}\lambda'\phantom{xxx}} &\m\m& \underline{\phantom{xxx}\lambda''\phantom{xxx}}\\
\infty &&1&&1\\
 0&& 1 && p \\
\frac 1{c_1 q^l}, \; l \ge \frac e2 && p&&1 \\
\frac 1{c_1 q^l}, \; l < \frac e2  && (c_1')^{-1} c_1 && (c_1'')^{-1} c_1 
\end{matrix}$$
(Inverses taken modulo $q^e.$)
\vskip 10pt 
\noindent{\bf Remarks:}
If $\chi$ is not primitive, it may not be necessary for $\lambda'$ to be congruent to $\lambda'_q$ 
modulo $q^{e_q}$;  a lower power of $q$ may suffice.  The same applies to $\lambda''.$  It 
will turn out that the systems of congruences for $j'$ and $j''$ can always be taken modulo  properly 
lower powers of $q.$   (See $e_q'$ and $N'$ in the statement of Proposition~\ref{p:7.5}.)  
\vskip 10pt 
Theorem~\ref{t:ThreeTerm} follows easily  from the following proposition, by the 
same argument sketched in the classical case at the beginning of 
this section.  This proposition works with the group algebra $\ga.$ 
The action of $\gltqp$ on functions $\uhp \to \C$ by the weight $k$ slash operator $|_k$ 
extends by $\C$-linearity to an action of $\ga,$ and this identifies the 
Hecke operator $T_p^\chi$ with 
$$\chi(p) \begin{pmatrix} p&0\\0&1 \end{pmatrix} + \sum_{b=0}^{p-1} \begin{pmatrix} 1&b\\0&p \end{pmatrix} 
\; \in \; \ga.$$

We shall also consider the right ideal in $\ga$ generated by all $\gamma - \chi(\gamma)$
with $\Gamma \in \Gamma_0(N).$  It is easy to see that any modular form of character $\chi$ for $\Gamma_0(N)$
is annihilated by this ideal.

%\vskip 10pt \noindent 
%{\bf Proposition 7.5} {\it 
\begin{proposition}\label{p:7.5}
Fix a cusp $\fa$ and let $\fa', \fa'', \lambda', \lambda'', j', j''$ be defined as in Theorem~\ref{t:ThreeTerm}.
For each $q \in S,$ let 
$$e_q' = \begin{cases} 0,& \fa_q = \infty, \\
e_q, & \fa_q = 0,\\
e_q - l, & \fa_q = \frac1{cq^l}.
\end{cases}$$
Let $N' = \prod_{q\in S} q^{e_q'}.$
Let $\mathcal I$ denote the right ideal in the group ring 
$\ga$ generated by all $\gamma - \chi(\gamma),\; \gamma \in \Gamma_0(N).$  Then 
$$T_p^\chi\gamma_\fa \; \equiv \; \chi(\lambda'') \gamma_{\fa''} \bpm p&0\\0&1 \ebpm 
\bpm 1& j'' \\ 0& 1 \ebpm +
\chi( \lambda' ) \gamma_{\fa'} 
\underset{b \equiv j' \pmod{N'}} 
{\sum_{
0 \le b < N'p
}
}
\bpm 1&b\\0&p\ebpm
\pmod{\mathcal I}
.$$
%}
\end{proposition}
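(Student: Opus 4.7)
The plan is, for each of the $p+1$ monomials in $T_p^\chi\gamma_\fa$, to multiply on the left by a suitable element $\delta\in\Gamma_0(N)$ so as to reshape it into one of the monomials appearing on the right-hand side.  Because $\delta\cdot g\equiv \chi(\delta)\,g\pmod{\mathcal I}$ for $\delta\in\Gamma_0(N)$ and any $g\in\ga$, each such transformation contributes a factor of $\chi(\delta)$, and the problem reduces to identifying the correct target matrix and verifying that the resulting $\chi(\delta)$ matches the claimed $\chi(\lambda')$ or $\chi(\lambda'')$.

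First I would reduce the problem to local computations.  The reduction map $SL(2,\Z)\to \prod_{q\in S}SL(2,\Z/q^{e_q}\Z)$ is surjective and, by Corollary~\ref{c:6.3}, our choice of $\gamma_\fa$ ensures $\gamma_\fa\equiv\gamma_{\fa_q}\pmod{q^{e_q}}$ with $\gamma_{\fa_q}$ one of the explicit matrices of \eqref{e:DoubleCosetReps}.  The condition that $\delta\in SL(2,\Z)$ lie in $\Gamma_0(N)$ factors through the primes $q\in S$ independently, so each local $\delta_q$ can be searched for separately and then reassembled by the Chinese remainder theorem.  This at once shows that the global $j', j''$ (resp.\ $\lambda', \lambda''$) are determined by local congruences modulo $q^{e_q'}$ (resp.\ $q^{e_q}$) at each prime.

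Next, for each of the four shape-cases of $\gamma_{\fa_q}$ -- namely $\infty$, $0$, $\frac{1}{c_1q^l}$ with $l\ge \frac e2$, and $\frac{1}{c_1q^l}$ with $l<\frac e2$ -- I would perform direct matrix computations.  For the diagonal term, I solve $\delta_q''\,\gamma_{\fa_q''}\bspm p&0\\0&1 \espm\bspm 1&j''\\0&1 \espm = \bspm p&0\\0&1 \espm\gamma_{\fa_q}$ for $\delta_q''\in\Gamma_0(q^{e_q})$;  the integrality and divisibility-by-$q^{e_q}$ conditions on $\delta_q''$ force $\fa_q''=p^{-1}\cdot\fa_q$ (in the sense of Lemma~\ref{l:6.6}), pin down $j''$ modulo $q^{e_q'}$, and yield $\lambda''_q$ as the lower-right entry of $\delta_q''$.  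A parallel analysis for the $b$-terms with $\fa_q'=p\cdot\fa_q$ determines $j'_q$ and $\lambda'_q$.  The essential combinatorial point here is that the lower-right entry of the resulting $\delta_q'$ must come out independent of $b$, so that a single $\chi(\lambda')$ can be pulled outside the sum, while the induced map $b\mapsto b'$ is a bijection from $\{0,\dots,p-1\}$ onto the $p$ residues of $\{0,\dots,N'p-1\}$ lying in the class $j'\pmod{N'}$.

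The main obstacle lies in the case $\fa_q = \frac{1}{c_1 q^l}$ with $l<\frac e2$, where $\gamma_{\fa_q}=\bspm 1&0\\c_1q^l&1 \espm$ is genuinely nontrivial and $m_{\fa_q}=q^{e-2l}>1$.  There the products $\gamma_{\fa_q'}\bspm 1&j'\\0&1 \espm\bspm 1&b'\\0&p \espm$ and $\gamma_{\fa_q''}\bspm p&0\\0&1 \espm\bspm 1&j''\\0&1 \espm$ must be compared with $\bspm 1&b\\0&p \espm\gamma_{\fa_q}$ and $\bspm p&0\\0&1 \espm\gamma_{\fa_q}$ by inverting quantities of the form $1-q^lc_1 j'p$ and $p-j''q^lc_1$ modulo $q^e$, which is precisely where the congruences $(1-q^lc_1 j'p)c_1'\equiv c_1 p\pmod{q^e}$ and $c_1''(p-j''q^lc_1)\equiv c_1\pmod{q^e}$ originate.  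Extracting the lower-right entry of $\delta$ in this case then yields the table's values for $\lambda'_q$ and $\lambda''_q$ modulo $q^{e_q}$, and CRT gives the global statement.
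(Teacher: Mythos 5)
Your overall strategy---pair each of the $p+1$ monomials of $T_p^\chi\gamma_\fa$ with a monomial of the right-hand side by left multiplication by some $\delta\in\Gamma_0(N)$, read off $\widetilde\chi(\delta)$, and reduce to prime powers by the Chinese remainder theorem---is essentially the paper's (its Lemma~\ref{l:7.6} followed by the case-by-case analysis). But the pairing you propose is wrong, and the error sits exactly at the step you describe as ``direct matrix computation.''

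You propose to solve $\delta_q''\,\gamma_{\fa_q''}\bspm p&0\\0&1\espm\bspm 1&j''\\0&1\espm=\bspm p&0\\0&1\espm\gamma_{\fa_q}$, i.e., you match the diagonal source monomial with the diagonal target monomial. Writing $\gamma_{\fa_q}=\bspm a&b\\c&d\espm$, one finds formally $\delta_q''=\bspm a&p(b-aj'')\\ c/p& d-cj''\espm\gamma_{\fa_q''}^{-1}$, which can lie in $SL(2,\Z)$ only when $p\mid c$; since $p\nmid N$, this fails for essentially every cusp other than $\infty$. For instance, for $\fa=0$ we have $\bspm p&0\\0&1\espm\bspm 0&-1\\1&0\espm=\bspm 0&-p\\1&0\espm=\bspm 0&-1\\1&0\espm\bspm 1&0\\0&p\espm$, which lies in the left coset $SL(2,\Z)\bspm 1&0\\0&p\espm$ and not in $SL(2,\Z)\bspm p&0\\0&1\espm$, so no $\delta''$ exists and no choice of $j''$ helps. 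Hence the ``integrality and divisibility conditions'' you invoke are unsatisfiable rather than determining $\fa_q''$, $j''$, $\lambda_q''$; for the same reason the $b$-source terms do not map bijectively among the $b$-target terms alone. The missing ingredient is the permutation of the coset representatives $S_p=\left\{\bspm p&0\\0&1\espm\right\}\cup\left\{\bspm 1&i\\0&p\espm\right\}$ induced by $\gamma_\fa$: for each source $\xi\in S_p$ there is a unique $\xi'\in S_p$ with $\xi\gamma_\fa(\xi')^{-1}\in SL(2,\Z)$, and this bijection of $S_p$ is nontrivial (for $\fa=0$ it swaps the diagonal representative with $\bspm 1&0\\0&p\espm$). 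The paper isolates precisely this step as Lemma~\ref{l:7.6}, rewriting $T_p^\chi\gamma\equiv s(c,pd)\bspm p&0\\0&1\espm+\sum_i s(pc,d-ic)\bspm 1&i\\0&p\espm\pmod{\mathcal I}$ so that the Hecke matrices end up on the right with the correct reindexing, and only then decomposing each $s(\cdot,\cdot)$ as $\gamma_0\gamma_\fb\bspm 1&j\\0&1\espm$ to extract the character. Once that reindexing is in place, your localization and four-case analysis do go through; as written, however, your plan attacks equations that have no solutions.
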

\vskip 10pt 
As a first step towards the proof of Proposition~\ref{p:7.5}, we show the following.
%\vskip 10pt \noindent 
%{\bf Lemma 7.6}  {\it
\begin{lemma}\label{l:7.6}
For each element of 
the set  $\left\{ (c, d) \in (\Z/N\Z)^2 \;|\; \langle c, d \rangle  = \Z/N\Z\right\},$  
fix an element $s(c,d)$ of $SL(2, \Z)$ such that the bottom row of $s(c,d)$
is congruent to $(c,d)\pmod{N}.$ 
Take $p$ a prime not dividing $N.$  
Then for any matrix $\gamma\in SL(2, \Z)$ such that the bottom 
row of $\gamma$ is congruent to $(c,d)\pmod{N},$ we have 
$$T_p^\chi \gamma
\equiv s(c,pd) \begin{pmatrix} p&0\\0&1 \end{pmatrix} + \sum_{i=0}^{p-1} s(pc, d-ic) \begin{pmatrix} 1&i \\ 0 & p 
\end{pmatrix} \pmod{\mathcal I},$$
where $\mathcal I$ is the right ideal of $\ga$ defined in Proposition~\ref{p:7.5}.
%}
\end{lemma}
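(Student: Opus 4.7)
The plan is to expand $T_p^\chi\gamma$ in $\ga$ as $\chi(p)\bspm p&0\\0&1\espm\gamma + \sum_{b=0}^{p-1}\bspm 1&b\\0&p\espm\gamma$, and to match each of the $p+1$ summands, modulo $\mathcal I$, with one of the $p+1$ summands on the right-hand side of the claimed identity.  The basic mechanism is that $\gamma_0\cdot M \equiv \widetilde\chi(\gamma_0)\,M \pmod{\mathcal I}$ for any $\gamma_0 \in \Gamma_0(N)$ and $M \in \gltqp$, so it suffices to factor each left summand as $\gamma_0\cdot M'$ with $\gamma_0 \in \Gamma_0(N)$ and $M'$ equal to one of the explicit matrices on the right, while keeping careful track of the resulting scalar $\widetilde\chi(\gamma_0)$.

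Write $\gamma = \bspm a&b\\c&d\espm$ with actual integer entries, whose bottom row $(c,d)$ has reduction mod $N$ equal to the class also written $(c,d)$ in the lemma statement.  Suppose first $\gcd(c,p)=1$, and let $B_0 \in \{0,\ldots,p-1\}$ be the unique solution of $a+B_0c \equiv 0 \pmod p$.  For this $B_0$ the matrix $\bspm 1&B_0\\0&p\espm \gamma \bspm p&0\\0&1\espm^{-1}$ has integer entries; setting $\gamma_0 := \bspm 1&B_0\\0&p\espm \gamma \bspm p&0\\0&1\espm^{-1} s(c,pd)^{-1}$, a computation of its bottom row modulo $N$ (using $a_1 d_1 - b_1 c_1 = 1$ for $s(c,pd) = \bspm a_1&b_1\\c_1&d_1\espm$) yields $(0,1) \pmod N$, so $\gamma_0 \in \Gamma_0(N)$ with $\widetilde\chi(\gamma_0)=1$ and hence $\bspm 1&B_0\\0&p\espm\gamma \equiv s(c,pd)\bspm p&0\\0&1\espm \pmod{\mathcal I}$.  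For each remaining $B \in \{0,\ldots,p-1\}\setminus\{B_0\}$, the unique $i(B) \in \{0,\ldots,p-1\}$ solving $i(a+Bc) \equiv b+Bd \pmod p$ (which exists because $p\nmid a+Bc$) makes $\bspm 1&B\\0&p\espm\gamma\bspm 1&i(B)\\0&p\espm^{-1}$ an integer matrix, and the analogous bottom-row argument produces $\bspm 1&B\\0&p\espm\gamma \equiv s(pc, d-i(B)c)\bspm 1&i(B)\\0&p\espm \pmod{\mathcal I}$ with trivial character factor.  Setting $i(\infty) \equiv dc^{-1}\pmod p$ makes $\bspm p&0\\0&1\espm\gamma\bspm 1&i(\infty)\\0&p\espm^{-1}$ integer, but now the emerging $\gamma_0$ has bottom row $\equiv (0,p^{-1}) \pmod N$, so $\widetilde\chi(\gamma_0) = \chi(p)^{-1}$, which exactly cancels the explicit $\chi(p)$ coefficient in front of this term.

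The combined assignment $B \mapsto i(B)$ (for $B \ne B_0$) together with $\infty \mapsto i(\infty)$ must be a bijection onto $\{0,\ldots,p-1\}$: if one had $i(B)=i(\infty)$ for some $B \ne B_0$, the defining equations would force $ad \equiv bc \pmod p$, contradicting $ad-bc = 1$.  The case $p\mid c$ is entirely analogous but simpler, with $\bspm p&0\\0&1\espm\gamma$ now playing the role that $\bspm 1&B_0\\0&p\espm\gamma$ plays above—the matrix $\bspm p&0\\0&1\espm\gamma\bspm p&0\\0&1\espm^{-1}$ is automatically integer, yielding $\chi(p)\bspm p&0\\0&1\espm\gamma \equiv s(c,pd)\bspm p&0\\0&1\espm \pmod{\mathcal I}$—while $B \mapsto i(B)$ becomes a bijection $\{0,\ldots,p-1\} \to \{0,\ldots,p-1\}$ given by $i(B) \equiv a^{-1}(b+Bd)\pmod p$, which is well defined since $ad \equiv 1 \pmod p$ forces $\gcd(a,p)=1$.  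The main obstacle is simply the bookkeeping: each integrality requirement on $\gamma_0$ dictates the precise formulas for $B_0,\, i(B),$ and $i(\infty)$, and each verification that $\gamma_0 \in \Gamma_0(N)$ with the expected value of $\widetilde\chi(\gamma_0)$ relies crucially on $\gcd(p,N)=1$ so that $p$ is invertible modulo $N$.
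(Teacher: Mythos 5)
Your proof is correct and follows essentially the same route as the paper: both expand $T_p^\chi\gamma$ into its $p+1$ summands and match each one, modulo $\mathcal I$, with a term on the right-hand side by comparing bottom rows modulo $N$ and tracking the character value of the intervening element of $\Gamma_0(N)$, the only difference being that the paper obtains the bijection between left factors and right factors abstractly from the two-sided coset decomposition $SL(2,\Z)\,\xi\, SL(2,\Z)=\coprod_{\xi'}\xi'\,SL(2,\Z)$, whereas you solve for it explicitly. The one point to tighten is the bijectivity check in the case $\gcd(c,p)=1$: you rule out $i(B)=i(\infty)$ but not $i(B)=i(B')$ for distinct $B,B'\neq B_0$; this follows by the same computation, since subtracting the two defining congruences gives $(B-B')(d-ic)\equiv 0\pmod p$, forcing $d\equiv ic$ and hence reducing to the collision with $i(\infty)$ that you already excluded.
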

%\vskip 10pt 
%{\bf Proof: }
\begin{proof}
Let 
$$S_p = \left\{ \bpm p&0 \\ 0 &1 \ebpm \right\} \cup \left\{ \bpm 1 &i \\ 0 &p \ebpm : 0 \le i \le p \right\}.$$
Then, for all $\xi \in S_p,$
$$SL(2, \mathbb Z) \cdot \xi \cdot SL(2, \Z) = \coprod _{\xi' \in S_p} SL(2, \Z) \cdot \xi'
= \coprod _{\xi' \in S_p} \xi'\cdot SL(2, \Z).
$$
It follows that for all 
$\xi \in S_p, \gamma \in SL(2,\Z)$ there exist unique 
$\xi' \in S_p, \gamma' \in SL(2, \Z)$ such that 
$\xi \gamma = \gamma' \xi',$ and that, for fixed $\gamma,$ the map $\xi \mapsto \xi'$ is a bijection 
$S_p \to S_p.$  
If $\xi$ and $\gamma$ are given, then 
the element $\xi'$ may be described concretely as the unique element 
of $S_p$ such that $\xi \gamma (\xi')^{-1} \in SL(2, \Z).$

Fix any $\gamma_0 \in \Gamma_0(N)$ such that $\widetilde \chi(\gamma_0) = \chi(p),$
and let 
$$S'_p =  \left\{ \gamma_0 
\bpm p&0 \\ 0 &1 \ebpm \right\} \cup \left\{ \bpm 1 &i \\ 0 &p \ebpm : 0 \le i \le p \right\}.$$
  Then 
  $$T_p^\chi 
  = \sum_{\xi \in S'_p} \xi  \pmod{\mathcal I}.$$
  To prove that two individual matrices in $\gltqp$ are equivalent $\pmod{\mathcal I}$ it 
  suffices to prove that their bottom rows 
  are congruent modulo $N.$  
  It is clear that the bottom row of $\xi s(c,d)$ is $(pc, pd)$ for all $\xi \in S'_p,$
  so that the bottom row of $\xi s(c,d) (\xi')^{-1}$ is the row vector
  $(pc, pd) \cdot (\xi')^{-1}.$   As $\xi$ ranges over $S'_p,$ this row vector 
  ranges over the set 
  $\{ (pc, d-ic) \} \cup \{ (c, pd)\}.$  This completes the proof.
%\qed
\end{proof}
\vskip 10pt 
\noindent
{\bf Proof of Proposition~\ref{p:7.5}:}
We assume that we have fixed a maximal set of $\Gamma_0(N)$-inequivalent 
cusps together with a choice of matrix $\gamma_\fb$ for each cusp $\fb$
in this set as in 
%\S 6
section~\ref{s:RemarksOnChoices}.
Let $$X = \left\{ (c, d) \in (\Z/N\Z)^2 \;|\; \langle c, d \rangle  = \Z/N\Z\right\}.$$  For 
any $x= (x_0,x_1) \in X,$ there exist a unique cusp $\fb$ from our fixed set of representatives, 
corresponding to the $\Gamma_\i$-orbit of $[x_0:x_1]$ in $\P^1( \Z/N\Z).$ 
If $\gamma \in SL(2, \Z)$ has bottom row $\equiv (x_0,x_1) \pmod{N},$ then it is 
possible to choose $\gamma_0 \in \Gamma_0(N)$ and $j \in \Z$ such that 
$\gamma = \gamma_0 \gamma_\fb \left(\begin{smallmatrix} 1&j \\ 0 &1 \esm.$  
It follows immediately that 
$\gamma \equiv \widetilde\chi(\gamma_0)  \gamma_\fb \left(\begin{smallmatrix} 1&j \\ 0 &1 \esm \modi.$ 
It is possible to make 
$\gamma_0$ and $j$ unique by requiring $0 \le j \le m_\fb,$ but we will not do so.  
It turns out that it is sometimes 
possible to gain a bit of control over $\widetilde \chi( \gamma_0)$ by 
allowing 
 $j$ to vary over a larger range, 
 and this approach is more convenient.

In order to prove Proposition~\ref{p:7.5}, we must carry out this analysis carefully 
for each element of the set $\{ (pc, d-ic ) \} \cup \{ (c , pd)\},$
as $(c ,d)$ ranges over the bottom rows of the matrices $\gamma_\fa.$
We first complete the case when $N = q^e.$  
In this case, the pairs $(c ,d)$ considered  are
$ (1,0),(0,1),$ and $(cq^l , 1)$ for $0 < l < e, 0 < c < q^{e-l}$ and $q\nmid c.$
Let us first look carefully at $(pc , d-ic )$ in the case when $(c , d) = (1,0).$ 
clearly $(pc , d-ic ) = (p, -i)$ in this case.  The corresponding element 
of $\P^1( \Z/ q^e\Z)$ is $[p: -i],$ or $[1: -i \overline{p}],$ where $\overline{p}$ 
denotes the inverse modulo $q^e.$  This element of $\P^1( \Z/q^e\Z)$ lies in the 
$\Gamma_\infty$-orbit for which the standard representative is $[1:0].$  
This tells us  that $\gamma_{\fb}$ in this case is $\left(\begin{smallmatrix} 0&-1\\1&0 \esm.$  
Furthermore, since 
$[1:-i \overline{p} ] = [1:0]\cdot \left(\begin{smallmatrix} 1&-i \cdot \overline{p}\\ 0&1 \esm,$
we see that $j$ may be taken to be $-i \cdot \overline{p}.$  
(Here, we interpret this as taking the additive inverse and product modulo 
$q^e,$ so that the answer is an integer between $0$ and $q^e -1,$ inclusive.)

In order to 
keep track of $\widetilde \chi( \gamma_0)$ as well, it is necessary to work with 
$X$ rather than $\P^1.$  The more precise statement is that 
$(p, -i) = p \cdot (1,0) \cdot \left(\begin{smallmatrix} 1& -i \overline{p} \\ 0&1\esm.$
This immediately implies that 
$$\begin{pmatrix} *&*\\ p&-i\end{pmatrix} = \gamma_0 \cdot \begin{pmatrix} 0&-1\\ 1&0 \end{pmatrix} \cdot \begin{pmatrix} 1& -i \overline{p} \\ 0&1\end{pmatrix},$$
with $\gamma_0 \in \Gamma_0(q^e)$ such that 
$\widetilde \chi(\gamma_0) = \chi( p).$ 
Hence 
$$\begin{pmatrix} *&*\\ p&-i\end{pmatrix} \; \equiv \; \chi(p) \cdot \begin{pmatrix} 0&-1\\ 1&0 \end{pmatrix} \cdot \begin{pmatrix} 1& -i \overline{p} \\ 0&1\end{pmatrix}
\modi$$
in this case.
Now, $\left(\begin{smallmatrix}  1& -i \overline{p} \\ 0&1\esm \cdot \left(\begin{smallmatrix} 1&i \\ 0 &p\esm 
= \left(\begin{smallmatrix} 1& i - i p\overline{p} \\ 0& p \esm.$
Clearly, for each $i,$ the integer $i-i p\overline{p}$ is divisible by $q^e.$  Furthermore, these 
integers are all distinct modulo $p,$ and all in the range from $0$ to $pq^e-1.$  It follows 
that we have 
$$\sum_{i = 0}^{p-1} 
  \begin{pmatrix} *&*\\ p&-i\end{pmatrix} \cdot \begin{pmatrix} 1&i \\ 0 &p\end{pmatrix} 
 \; \equiv \; \chi(p) \cdot \begin{pmatrix} 0&-1\\ 1&0 \end{pmatrix} \cdot \underset{ b \equiv 0 \pmod{q^e} } 
  {\sum_{0 \le b \le q^ep}}\begin{pmatrix} 1&b\\0&p\end{pmatrix}.$$

Having worked this case in detail to illustrate the method, we shall henceforth be more 
brief.  To finish the case $(1,0)$ we simply note that if $(c,d)=(1,0)$ then 
$(c, pd)$ is again $(1,0),$ whence a contribution of 
$\left(\begin{smallmatrix} *&*\\  1& 0 \esm \equiv \left(\begin{smallmatrix} 0& -1 \\ 1&0 \esm \modi.$
This completes the proof of Proposition~\ref{p:7.5} in the case $N = q^e$ and $\fa = 0.$
The case $\fa = \infty$ is, of course,  trivial.  

We turn to $\fa = \frac 1{c_1 q^l}.$  
First, consider.
$(c_1q^l , p )$.  
We must identify the standard representative for the orbit of this element under 
the action of $\Z/N\Z \times \Gamma_\i,$ where $\Z/N\Z$ acts by scalar multiplication 
and $\Gamma_\i$ acts by matrix multiplication on the right.  Referring to 
%\S 4
Lemma~\ref{l:4.1}, it is 
not difficult to see that the standard representative is $(c_1''q^l , 1)$ where 
$c_1''$ is uniquely determined by the conditions that $pc_1'' \equiv c_1 \pmod{q^{e-l}}$ and 
$0< c_1'' < q^{\min(l,e-l)}.$ 
In the simpler case $l \ge e-l,$ we get
$$\begin{pmatrix} * & * \\ c_1q^l & p \end{pmatrix} \; \equiv \;
\chi(p) \begin{pmatrix} 1& 0 \\ c_1'' q^l& 1\end{pmatrix} \modi.$$

If $l < e-l,$ it is a bit more complicated.  
Because we deal with reduction and inversion modulo various powers
of $q,$ it is convenient to introduce a more explicit notation.  
For $a,r$ integers with $r$ positive, 
let $[a]_r$ denote the unique integer between $0$ and $q^r-1,$ inclusive,
which is congruent to $a\pmod{q^r}$ and $[a]_r^{-1}$ 
 the unique integer in the same range such that 
 $a\cdot [a]_r^{-1} \equiv 1 \pmod{q^r}.$

When $l < e-l,$ we have $c_1'' = c_1 \big[[p]_{e-l}^{-1}\big]_l.$  It is necessary to choose
$j''$ such that $c_1 [p - j''c_1q^l]^{-1}_{e-l} \equiv c_1'' \pmod{q^{e-l}}.$ 
However, it is possible, and more convenient, to choose $j''$ subject to the 
more stringent condition that 
$c_1 [p - j''c_1q^l]^{-1}_{e} \equiv c_1'' \pmod{q^{e}}.$ 
This ensures that $\chi(p - j''c_1q^l) = \chi(c_1)\chi(c_1'')^{-1}.$
We have 
$$( c_1q^l, p) = (c_1q^l, p-j''c_1q^l) \begin{pmatrix} 1&j''\\0&1 \end{pmatrix}
\; = \; (p -j''c_1q^l) ( c_1''q^l, 1)  \begin{pmatrix} 1&j''\\0&1 \end{pmatrix},$$ and deduce
$$\begin{pmatrix} *&*\\ c_1q^l&p\end{pmatrix} 
\equiv \chi(c_1)\chi(c_1'')^{-1} \begin{pmatrix} 1&0\\c_1''q^l&1\end{pmatrix} \begin{pmatrix} 1&j''\\0&1 \end{pmatrix}.$$

Now, consider $(c_1pq^l , 1-ic_1q^l).$
When $l\ge e-l$ the most expedient thing is to write this as 
$$(c_1pq^l , 1)\begin{pmatrix} 1 & [-[p]^{-1}_{e-l}\cdot i]_{e-l}\\0&1\end{pmatrix}.$$ Much as above, we get
$$\sum_{i=0}^{p-1} 
\begin{pmatrix} * & * \\ c_1pq^l & 1-ic_1q^l \end{pmatrix} \; \equiv \; \begin{pmatrix} 1&0\\c_1'q^l&1\end{pmatrix}
\underset{b \equiv 0 \pmod{q^{e-l}}} {\sum_{0\le b < pq^{e-l}}} 
\begin{pmatrix} 1& b \\ 0 & p \end{pmatrix}.$$
When $l < e-l$ we choose $j(i)$ so that $c_1p [1-ic_1q^l - j(i)c_1pq^l ]^{-1}_e \equiv c_1' \pmod{q^e}.$
It is easy to see that this has a unique solution modulo $q^{e-l},$ and that the quantity 
$i+j(i)p$ is constant, modulo $q^{e-l},$ as $i$ varies.  We get
$$\sum_{i=0}^{p-1} 
\begin{pmatrix} * & * \\ c_1pq^l & 1-ic_1q^l \end{pmatrix} \; \equiv  
\underset{b \equiv j(0) \pmod{q^{e-l}}} {\sum_{0\le b < pq^{e-l}}} 
\begin{pmatrix} 1& b \\ 0 & p \end{pmatrix}.$$  It is easily verified that $j(0)=j',$ defined as in the statement of the theorem.
This completes the proof in the prime power case.  

To complete the general case, let us resume the notation $N = \prod_{q \in S} q^{e_q}.$ 
Take $(c,d)$ the bottom row of the matrix $\gamma_\fa$ corresponding 
to some cusp $\fa,$ and consider $(c, pd).$  For each $q,$ we know that 
$(c, pd)$ is equivalent modulo $q^{e_q}$ to one of the row vectors for which the 
analysis was carried out above.  Thus, for each $q$ we obtain 
a cusp $\fa_q'',$ 
$j''_q \in \Z$ and 
$\lambda_q'' \in (\Z/q^{e_q}\Z)^\times$ such that
$ (c, pd) \equiv \lambda_q'' 
\cdot(c_q' , d_q') \cdot \left(\begin{smallmatrix} 1&j'_q\\0&1 \esm \pmod{q^{e_q}},$ where
$(c'_q,d'_q)$ is the bottom row of $\gamma_{\fa_q''}.$  
Solving systems of congruences, we obtain a cusp $\fa'',$ integer $j''$ and 
element $\lambda''$ of $(\Z/N\Z)^\times$ such that 
$$(c, pd) \; = \; \lambda'' ( c', d') \begin{pmatrix} 1&j''\\0&1 \end{pmatrix},$$
where $(c',d')$ is the bottom row of $\gamma_{\fa''}.$
It follows that 
$$\begin{pmatrix} * & * \\  c&pd \end{pmatrix} \; \equiv \; \chi( \lambda'') \gamma_{\fa''} \begin{pmatrix} 1&j''\\0&1 \end{pmatrix}\modi.$$

Similarly, for $i=0$ to $p-1,$ by passing to the individual primes, using the 
prime power case, and then solving systems of congruences, we obtain 
$$\begin{pmatrix} * & * \\  pc'&d-ipc' \end{pmatrix} \; \equiv \; \chi( \lambda') \gamma_{\fa'} \begin{pmatrix} 1&j(i)\\0&1 \end{pmatrix} \hskip -3pt \modi,
\qquad (0 \le i < p)
$$
for some cusp $\fa'$ some $\lambda' \in (\Z/N\Z)^\times,$ which are independent 
of $i,$ and some integers $j(i), 0 \le i < p.$  
  Furthermore, it follows from the 
analysis done in the prime power case above that 
$j(i) \cdot p + i \equiv j'_q \pmod{q^{e'_q}},$  for each $q,$ 
where $e'_q$ is defined as in the proposition and $j'_q$ is as described (for prime powers) 
in the statement of Theorem~\ref{t:ThreeTerm}.  
It follows at once that 
$j(i) \cdot p + i$ is constant $\pmod{N'},$  where $N'$ is defined 
as in the statement of Proposition~\ref{p:7.5}, and that the value is obtained by solving the system 
of congruences obtained from the various $q.$  
\qed

%%%%%%%%%%%%%%%%%%


\begin{thebibliography}{0}
\bibitem{Asai:1976}
T. Asai,  ``On the Fourier coefficients of automorphic forms at various cusps and some applications to Rankin's convolution,"  
\textit{J. Math. Soc. Japan} 28 (1976), no. 1, 48--61.

\bibitem{AtkinLehner:1970}
A.O.L. Atkin and J. Lehner,  ``Hecke operators on $\Gamma _{0}(m)$."
\textit{Math. Ann.} 185 (1970) 134--160. 

\bibitem{Bernstein:1984}
J. Bernstein, P. Deligne, D. Kazhdan, and M-F. Vign\' eras, ``Representations des groupes sur un corps local." 
\textit{Hermann, Paris }(1984).

\bibitem{Bump:1997}
D. Bump,  ``Automorphic forms and representations." 
\textit{Cambridge University Press }(1997).

\bibitem{Casselman:1973} 
W. Casselman, `` On some results of Atkin and Lehner." 
\textit{Math. Ann. }201 (1973), pp.  301-314. 

\bibitem{C-S} W. Casselman and J. Shalika,
The unramified principal series of $p$-adic groups. II. The Whittaker function.
Compositio Math. 41 (1980), no. 2, 207--231.


\bibitem{Flath:1979}
D. Flath,  ``Decomposition of representations into tensor products." 
\textit{Borel and Casselman } part 1 (1979), pp. 179-184.

\bibitem{Gelbart:1996}
S. Gelbart, ``Lectures on the Arthur-Selberg trace formula." 
\textit{University Lecture Series}, 9, American Mathematical Society, Providence, RI (1996).

\bibitem{Gelbart:1997}
S. Gelbart, ``Three lectures on the modularity of $\overline\rho_{E,3}$ and the Langlands reciprocity conjecture,'' in   {\it Modular forms and Fermat's last theorem (Boston, MA, 1995),}  155--207, Springer, New York, 1997.

\bibitem{Godement:1970} R. Godement,  {\it Notes on Jacquet-Langlands theory,} mimeographed notes, Institute for Advanced Study, Princeton, NJ (1970).

\bibitem{Goldfeld:2006}
D. Goldfeld,  ``Automorphic forms and $L$-functions for the Group $GL(n, \mathbb{R})$." 
\textit{Cambridge University Press} (2006).

\bibitem{GoldfeldHundley}
D. Goldfeld  and J. Hundley,  ``Automorphic representations and L-functions for the group $GL(n, \mathbb{A})$." 
\textit{Cambridge University Press}, to appear 

\bibitem{KLSW}
E. Kowalski, 
Y.-K. Lau, K. Soundararajan and J. Wu,
``On modular signs,'' preprint, 
available at 
http://www.math.ethz.ch/$\sim$kowalski/papers-books.html\#signs 

\bibitem{Iwaniec:1997}
H. Iwaniec, ``Topics in classical automorphic forms." 
\textit{Graduate Studies in Mathematics }Volume 17 (1997).

\bibitem{Jacquet:1970} 
H. Jacquet  and R.P. Langlands,   ``Automorphic forms on $\text{\rm GL}(2)$."  
\textit{Lecture Notes in Mathematics} Vol. 114. Springer-Verlag, Berlin-New York (1970).

\bibitem{Kirillov:1963}
A.A. Kirillov,  ``Infinite-dimensional unitary representations of a second-order matrix group with elements in a locally compact field." (Russian) 
\textit{Dokl. Akad. Nauk SSSR }150, 740--743 (1963).
 
\bibitem{Kirillov:1966}
A.A.  Kirillov,  ``Classification of irreducible unitary representations of a group of second-order matrices with elements from a locally compact field."  (Russian) 
\textit{Dokl. Akad. Nauk SSSR } 168 273--275; translated as Soviet Math. Dokl. 7, 628--631 (1966).


\bibitem{Kojima:1979}
H. Kojima, ``Rankin's method in the case of level $4q$ and its applications to the Doi-Naganuma lifting," 
\textit{T\^ohoku Math. J. }(2) 31, no. 2, 195--205  (1979).

\bibitem{Langlands:1980}R. Langlands,  {\it Base change for ${\rm GL}(2)$,} Annals of Mathematics Studies, 96. Princeton University Press, Princeton, N.J.; University of Tokyo Press, Tokyo, 1980

\bibitem{Li:1975} 
W. Li,  ``Newforms and functional equations." 
\textit{Math. Ann. } 212 , 285--315  (1975).

\bibitem{Maass:1947}
H. Maass, ``\" Uber automorphe Funktionen von mehreren Ver \" anderlichen und die Bestimmung von Dirichletschen Reihen durch Funktionalgleichungen." (German) 
\textit{Ber. Math.-Tagung T\" ubingen }1946 (1946), 100--102 (1947).
 
\bibitem{Maass:1983} 
H.  Maass, ``Lectures on modular functions of one complex variable."  Second edition, 
\textit{Tata Institute of Fundamental Research Lectures on Mathematics and Physics}, 29, Tata Institute of Fundamental Research, Bombay, (1983).

\bibitem{Shahidi:1990}
F. Shahidi, ``A proof of Langlands' conjecture on Plancherel measures; complementary series for $p$-adic groups."  
\textit{Ann. of Math. } (2)  132,  no. 2, 273-330 (1990). 

\bibitem{Shahidi:1994}
F. Shahidi,
``Symmetric power $L$-functions for ${\rm GL}(2)$,'' in {\it Elliptic curves and related topics,}  159--182,
CRM Proc. Lecture Notes, 4, Amer. Math. Soc., Providence, RI, 1994.
\end{thebibliography}
\end{document}